\documentclass[a4paper, oneside, reqno]{amsart}

\allowdisplaybreaks[1]

\usepackage{amssymb}
\usepackage{xfrac}
\usepackage{mathtools} 
\usepackage[shortlabels]{enumitem} 
\usepackage{soul}
\usepackage[largesc]{newpxtext} 
\usepackage[vvarbb]{newpxmath}  
\usepackage{array}
    \newcolumntype{C}{>{$}c<{$}}
    \newcolumntype{L}{>{$}l<{$}}
    \newcolumntype{R}{>{$}r<{$}}

\usepackage{pgfplots}
    \pgfplotsset{compat=1.16}
\usepackage{tikz}
    \tikzstyle{point} = [circle, draw, fill=blue!80, 
    text width=0.5em, text centered, 
    node distance=3cm, inner sep=0pt]
    \tikzstyle{line} = [draw, -]
\usepackage{graphicx}

\usepackage{standalone} 
\usepackage{tikz-3dplot} 

\usepackage{xcolor}
\definecolor{myurlcolor}{rgb}{0.1,0.1,0.8}
\definecolor{mylinkcolor}{rgb}{0.05,0.05,0.4}

\usepackage{hyperref}
\hypersetup{colorlinks,
	linkcolor=mylinkcolor,
	citecolor=mylinkcolor,
	urlcolor=myurlcolor}

\usepackage{cleveref}
\usepackage{microtype}
 
\newtheorem{theorem}{Theorem}[section]
\newtheorem{lemma}[theorem]{Lemma}
\newtheorem{prop}[theorem]{Proposition}
\newtheorem{cor}[theorem]{Corollary}
\newtheorem{conj}[theorem]{Conjecture}

\newtheorem*{thmmicrowtg_D_invertible}{Theorem~\ref{thm:microwtg_D_invertible}}
\newtheorem*{thmstrict_negative_type_microwtg}{Theorem~\ref{thm:strict_negative_type_microwtg}}
\newtheorem*{thmneg_type_schoenberg_sphere}{Theorem~\ref{thm:neg_type_schoenberg_sphere}}

\newtheorem*{thmmicrowtg_consequences}
{Theorem~\ref{thm:microwtg_consequences}}
\newtheorem*{thmmicrowtg_energy_maximizing}
{Theorem~\ref{thm:microwtg_energy_maximizing}}
\newtheorem*{thmmicrowtg-curvature}
{Theorem~\ref{thm:microwtg-curvature}}

\theoremstyle{definition}
\newtheorem{definition}[theorem]{Definition}
\newtheorem{example}[theorem]{Example}
\newtheorem{remark}[theorem]{Remark}

\theoremstyle{remark}

\numberwithin{equation}{section}

\newcommand{\demph}[1]{\emph{#1}}

\DeclareMathOperator{\adj}{adj}

\renewcommand{\d}{d}
\renewcommand{\vec}{\mathbf}
\renewcommand{\th}{\text{th}}

\DeclarePairedDelimiterXPP{\matmag}[1]{\operatorname{mag}}{(}{)}{}{#1}

\newcommand{\magfunname}[1]{\mu_{#1}}
\newcommand{\magderivname}[1]{\mu'_{#1}}
\DeclarePairedDelimiterXPP{\magfun}[2]{\magfunname{#1}}{(}{)}{}{#2}
\DeclarePairedDelimiterXPP{\magderiv}[2]{\magderivname{#1}}{(}{)}{}{#2}
\DeclarePairedDelimiterXPP{\con}[1]{\operatorname{con}}{(}{)}{}{#1}
\newcommand{\onevector}{\mathbf{1}}
\newcommand{\onematrix}{\onevector \onevector^\transpose}
\newcommand{\zerovector}{\mathbf{0}}
\newcommand{\transpose}{\mathrm{T}}

\DeclareMathOperator{\im}{im}

\newcommand{\R}{\mathbb{R}}
\newcommand{\define}{\textbf}
\newcommand{\dd}{\mathrm{d}} 
\DeclareMathOperator{\lagrangian}{\mathcal{L}}
\newcommand{\micro}{\widehat{\vec{w}}}

\newcommand{\finsuppmeas}{\mathrm{FM}}
\newcommand{\wtspace}{\mathcal{W}}
\newcommand{\dispair}[2]{\langle #2,#1 \rangle}

\makeatletter
\newsavebox{\@bra}
\newsavebox{\@brb}
\DeclarePairedDelimiterX{\Zpair}[2]{
  \delimsize\langle%
  \hspace*{0.3mm}\hspace*{0.75mm}\savebox{\@bra}{\(\displaystyle\left\langle\vphantom{#1}\right.\)}\hspace*{-1.035\wd\@bra}%
  \delimsize\langle%
}{
  \delimsize\rangle%
  \hspace*{0.3mm}\hspace*{0.75mm}\savebox{\@brb}{\(\displaystyle\left.\vphantom{#1}\right\rangle\)}\hspace*{-1.035\wd\@brb}%
  \delimsize\rangle
}
{#1, #2}
\makeatother

\DeclareMathOperator{\measmassone}{\mathcal{M}_1}
\newcommand{\measmasszero}[1]{\mathcal{M}_0(#1)}
\newcommand{\N}{\mathbb{N}}
\newcommand{\isomto}{\xrightarrow{\raisebox{-0.5ex}[0ex][0ex]{$\sim$}}}
\DeclareMathOperator{\besselspace}{H}

\begin{document}

\title{The microscopic weighting on a metric space}

\author{Emily Roff}
\address{ER: School of Mathematics and the Maxwell Institute for Mathematical Sciences, University of Edinburgh, Scotland}

\author{Simon Willerton}
\address{SW: School of Mathematical and Physical Sciences, University of Sheffield, England}

\subjclass[2010]{Primary 51F99; Secondary 05C50}

\date{July 6, 2026}

\begin{abstract}
We introduce the \emph{microscopic weighting}, a canonical signed measure of mass one that can be associated to almost any finite metric space. The microscopic weighting is obtained as the small-scale limit of the weightings used to define the magnitude function. We give general criteria for its existence, proving in particular that every finite space of strictly negative type admits a microscopic weighting; this includes every finite subset of Euclidean or hyperbolic space and every finite tree. Heuristically speaking, the microscopic weighting distributes its mass as widely as possible across a space, assigning greater weight to sparse or outlying regions and emphasizing points on the boundary. Indeed, we show that on a finite space of negative type the microscopic weighting can be characterized (when it exists) as an optimizing measure for an energy integral determined by the distance function. Alternatively, it can be characterized in terms of the geometry of the Schoenberg embedding. Each of these interpretations also clarifies the information carried by the derivative of the magnitude function at zero. Though our main focus in this paper is on finite metric spaces, we lay the groundwork to extend the theory to compact subsets of Euclidean space. In that setting, we observe that the microscopic weighting must be understood as a distribution rather than as a measure.
\end{abstract}

\maketitle

\setcounter{tocdepth}{1}
\tableofcontents


\section{Introduction}
\label{sec:introduction}

This paper introduces and investigates a novel isometric invariant of finite metric spaces: the \demph{microscopic weighting}. The microscopic weighting is a canonical signed measure of mass one that can be attached to almost any finite metric space, including any finite subset of Euclidean or hyperbolic space, and any finite tree. It is defined as the small-scale limit of a family of scale-sensitive signed measures, known as weightings, that play a central role in the far-reaching theory of magnitude.

For a large class of spaces---those of negative type---the microscopic weighting can be given at least two complementary geometric interpretations. The first has the flavour of classical distance geometry: it relates the microscopic weighting on a space \(X\) to the shape of a convex polytope associated to \(X\) by a theorem of Schoenberg. The second interpretation is more intrinsic to the geometry of \(X\) itself: it says that the microscopic weighting is a canonical choice of maximizing measure for an energy integral determined by the distance function. Thus, the microscopic weighting on \(X\) can be interpreted as a signed measure of mass one that is spread out as widely as possible across \(X\). Moreover, for a finite subset of Euclidean space---or any finite space of strictly negative type---the maximal value of the energy integral is equal to the derivative at zero of the magnitude function.

Though our primary focus in this paper is on finite metric spaces, we will also begin to explore the extension of these ideas to compact spaces. We will see that for a compact subset of Euclidean space, a microscopic weighting must be understood not as a measure but as a distribution, in the sense of Schwartz. Focusing on the example of the three-dimensional unit ball, we will make a series of observations that lead us to conjecture that the relationship between maximal energy and magnitude extends from finite to compact spaces.

In this introduction we begin by describing the magnitude function of a finite metric space and the weightings used to compute it. That leads us to the definition of the microscopic weighting, after which we outline the main results of the paper.


\subsection{The magnitude function}
\label{subsec:magnitude-function}

The study of magnitude has its roots in category theory, but its branches reach out into geometry, topology, analysis, and combinatorics. The idea is that to every finite metric space \(X\) one can assign a function \(\mu_X \colon (0, \infty) \to \R \cup \{\infty\}\), whose parameter should be thought of as controlling the \emph{scale} of the metric on \(X\), and whose values record the \emph{effective size} of \(X\) as the scale varies. The function \(\mu_X\) is defined by Leinster in \cite{LeinsterMagnitude2013} as follows.

First, given a metric space \(X\) with points \(x_1,\ldots, x_n\) and distance function \(d\), for each \(t > 0\) let \(tX\) denote the space with the same points, in which the distance from \(x_i\) to \(x_j\) is \(t d(x_i,x_j)\). We will refer to \(tX\) as being the space \(X\) \define{at scale} \(t\). The \define{similarity matrix} of \(X\) at scale \(t\) is the \(n \times n\) matrix \(Z(t)\) with entries \(Z(t)_{ij} = \exp(-td(x_i,x_j))\).

To compute \(\mu_X(t)\), one looks for a vector \(\vec{w}(t) \in \R^n\) satisfying 
\begin{equation}
\label{eq:weight_equation}
    Z(t) \vec{w}(t) = \begin{pmatrix} 1 & 1 & \cdots & 1 \end{pmatrix}^\transpose.
\end{equation}
Such a vector is said to be a \define{weighting} on \(X\) at scale \(t\); we will think of it as a signed measure on \(X\), with \(w_i(t) \coloneq (\vec{w}(t))_i\) denoting the measure of the point \(x_i\). 
If \(X\) has a weighting at scale \(t\), then the \define{magnitude of \(X\)} at scale \(t\) is defined by
\[\mu_X(t) \coloneq \sum_{i=1}^n w_i(t)\]
for any weighting \(\vec{w}(t)\). If more than one weighting exists, this sum is independent of the weighting chosen to compute it (see \Cref{sec:concentration_of_matrices}). If no weighting exists at scale \(t\), we will say that \(\mu_X(t) = \infty\). The \define{magnitude function} of \(X\) is the function
\begin{align*}
\mu_X \colon (0, \infty) &\to \R \cup \{\infty\} \\
t &\mapsto \mu_X(t).
\end{align*}
This definition can be extended from finite metric spaces to many compact metric spaces \cite[\S 3]{LeinsterMagnitude2013}; we give some details in \Cref{sec:infinite-spaces}.

Though it is not obvious from the definition, the magnitude function is actually an instance of a very general notion of Euler characteristic \cite[\S 1.3]{LeinsterMagnitude2013}. Accordingly, magnitude shares various formal properties with topological Euler characteristic and with its more familiar specialization, the cardinality of finite sets. 

Crucially, though, \(\mu_X(t)\) varies with \(t\): unlike cardinality or topological Euler characteristic, magnitude is sensitive to scale. This scale-sensitivity is precisely what makes the magnitude function interesting. For a finite metric space \(X\), one can often understand \(\mu_X\) as recording the \emph{effective number of points} in \(X\) as the scale of the metric varies~\cite[Example~6.4.6]{LeinsterEntropy2021} and its instantaneous rate of growth as recording the \emph{effective dimension}~\cite[\S 4]{WillertonSpread2015}.


\subsection{Microscopic weightings}
\label{subsec:microweightings}

By definition, a weighting \(\vec{w}(t)\) on \(X\) describes the contribution of each point in \(X\) to its magnitude at scale \(t\). In typical cases the weights are unevenly distributed, and their distribution carries finer information about the geometry of the space. Heuristically speaking, points with many near neighbours tend to make a smaller contribution to magnitude than those with fewer near neighbours: this reflects the intuition that a tight cluster of several points is `effectively' a single point within the space. Accordingly, a weighting on \(X\) will tend to distribute more of its mass towards sparser or outlying regions.

Indeed, it is demonstrated by Bunch \textit{et al} in \cite{Bunch-etal} that weightings can be used to detect the boundary of a finite set approximating a region of Euclidean space, leading to their implementation as a method of edge detection in the context of computer vision (Adamer \textit{et al}, \cite{ABOR2019}). In a very different setting, within theoretical ecology, weightings on a `space of species' appear in connection with the quantification of ecological diversity. There, it is specifically positive weightings that are of interest: they represent, after normalization, the relative abundance of species in a maximally diverse ecological community  \cite{LeinsterMeckes2016, LeinsterRoff}. The weight assigned to each species by a weighting reflects its `specialness', or the extent to which its features differentiate it from other species in the same community.

Thus, a family of weightings \((\vec{w}(t))_{t > 0}\) on \(X\) is not just a means to compute the magnitude function, but a valuable refinement of it, capturing qualitative information about the distribution of points within \(X\). However, where a single weighting is called for, it cannot be chosen canonically. First, weightings, like magnitude, are scale-dependent. Second, even having fixed a choice of \(t\), there is no guarantee that there will exist a unique weighting on \(X\) at scale \(t\). In order to make meaningful comparisons between spaces, one would prefer to assign to each finite metric space a weighting-like measure which is uniquely defined and scale-independent.

One way to find such a measure is to take a limit of the weightings \(\vec{w}(t)\), either as \(t \to \infty\) or as \(t \to 0\). To make this precise, note that for every finite metric space \(X\) the function \(t \mapsto \det(Z(t))\) is analytic, so it is nowhere locally constant. In particular, there must exist some \(\epsilon > 0\) such that \(\det(Z(t)) \neq 0\) for \(0 < t < \epsilon\). 
It follows that for \(0 < t < \epsilon\) the space \(X\) carries a unique weighting at scale \(t\), namely
\[
\vec{w}(t) = Z(t)^{-1} \onevector,
\]
where \(\onevector\) denotes the vector \(\begin{pmatrix} 1 & 1 & \cdots & 1 \end{pmatrix}^\transpose\). Moreover, the weights \(w_i(t)\) depend continuously on \(t\) in this range.  Similarly, at sufficiently large scales a unique weighting exists and depends continuously on \(t\). In fact, since \(\lim_{t \to \infty} Z(t)\) is always the identity matrix \(I\), we have
\(
\lim_{t \to \infty} \vec{w}(t) = \lim_{t \to \infty} Z(t)^{-1} \onevector = I^{-1} \onevector = \onevector
\)
for every finite metric space. 

Taking the small-scale limit is more delicate, and turns out to be much more informative. As \(t\) goes to zero, \(Z(t)\) converges to the \(n \times n\) matrix \(\onematrix\) whose entries are all 1. For \(n>1\) this matrix is singular, so there is no reason to suppose that the family of weightings \((\vec{w}(t))_{t > 0}\) will converge as \(t\) approaches zero. If it does, we will call the limiting vector a \emph{microscopic weighting}.

\begin{definition}
    Let \(X\) be a finite metric space. If the family of weightings \((\vec{w}(t))_{t > 0}\) converges component-wise as \(t \to 0\), the vector \(\lim_{t \to 0} \vec{w}(t)\) is the \define{microscopic weighting} on \(X\). We will denote it by \(\micro\).
\end{definition}

When a space admits a microscopic weighting, it is plainly unique and scale-independent. The goal, then, is to determine conditions under which a microscopic weighting exists, and what sort of information it captures.


\subsection{Outline of the paper}

We begin, in \Cref{sec:gaugings-concentration}, with some preliminary facts of linear algebra. To every symmetric real matrix one can associate a set of vectors we call \demph{gaugings} and an element of \(\R \cup \{\infty\}\) that we call the \demph{concentration} of the matrix (\Cref{def:gauging}). Weightings and magnitude can also be defined for symmetric real matrices, and they come `twinned' with gaugings and concentration in a sense explained in \Cref{rem:malue_RP1}.  Throughout our story, a central role will be played by gaugings and concentration for the \demph{distance matrix} of a finite metric space: the symmetric real matrix \(D\) with entries \(D_{ij} = d(x_i,x_j)\).

The study of microscopic weightings begins in \Cref{sec:microscopic_weightings}, where we consider the question of their existence. Our first theorem implies, in particular, that not every finite metric space admits a microscopic weighting.

\begin{thmmicrowtg_consequences}
    For every finite metric space \(X\) that admits a microscopic weighting, the following statements hold.
    \begin{enumerate}
        \item
        \label{intro:microwtg_consequences_1}
        The magnitude function of \(X\) satisfies \(\lim_{t \to 0} \mu_X(t) = 1\).
        
        \item
        \label{intro:microwtg_consequences_2}
        The magnitude function of \(X\) is differentiable at zero.

        \item
        \label{intro:thmmicrowtg_consequences_3}
        The microscopic weighting \(\micro\) satisfies
        \(\onevector^\transpose \micro = 1\) and \(D \micro = \magderiv{X}{0} \onevector\), where \(D\) is the distance matrix of \(X\) and \(\mu_X'\) is the first derivative of the magnitude function.
    \end{enumerate}
\end{thmmicrowtg_consequences}

In the terminology of this paper, part \eqref{intro:thmmicrowtg_consequences_3} of the theorem says that a microscopic weighting on \(X\) is a gauging for \(D\) with concentration \(\mu_X'(0)\). Part \eqref{intro:microwtg_consequences_1} says that if a space admits a microscopic weighting then it has the \emph{one-point property}. It is known that the one-point property holds generically for finite metric spaces, but not universally~\cite{RoffYoshinaga}---hence, not every space admits a microscopic weighting. Our most general existence result is the following.

\begin{thmmicrowtg_D_invertible}
    Let \(X\) be a finite metric space whose distance matrix \(D\) is invertible. Then \(X\) admits a microscopic weighting if and only if \(D\) has finite concentration. If so, then the microscopic weighting is the unique gauging for \(D\).
\end{thmmicrowtg_D_invertible}

Since invertibility and finite concentration both hold generically, \Cref{thm:microwtg_D_invertible} implies that a generic finite metric space admits a microscopic weighting. Moreover, though the invertibility of \(D\) is convenient for the proof of the theorem, we suspect it is dispensable: \Cref{conj:magD_nonzero_microweighting_exists} is that a finite metric space admits a microscopic weighting if and only if its distance matrix has finite concentration.

In \Cref{sec:negative_type} we specialize to metric spaces of negative and strictly negative type, whose definitions we recall in \Cref{sec:negative_type_spaces}. We observe in \Cref{prop:strict-neg-type-iff-mag-det} that finite spaces of strictly negative type can be characterized among those of negative type by the property that their distance matrix is invertible and has finite concentration. Thus, one can deduce from \Cref{thm:microwtg_D_invertible} that every finite metric space of strictly negative type admits a microscopic weighting.

\begin{thmstrict_negative_type_microwtg}
Every finite metric space \(X\) of strictly negative type admits a microscopic weighting and therefore has the one-point property. The microscopic weighting is given by
\[\lim_{t \to 0} \vec{w}(t) = \con{D} D^{-1} \onevector\]
where \(D\) is the distance matrix of \(X\). In particular, these statements hold for every finite subset of Euclidean space, every finite subset of hyperbolic space, and every finite tree.
\end{thmstrict_negative_type_microwtg}

In \Cref{sec:geometric_interpretations} we turn to the question of how to interpret the microscopic weighting when it exists. Our first interpretation rests on a classical theorem of Schoenberg's, which associates to every \(n\)-point metric space \(X\) of negative type a certain polytope \(\Delta(X)\): the convex hull of \(n\) points in Euclidean space. If \(X\) is of strictly negative type then \(\Delta(X)\) is an \((n-1)\)-simplex, so there exists a unique \((n-2)\)-sphere containing all its vertices. Otherwise, \(\Delta(X)\) will have dimension strictly less than \(n-1\), and in that case its vertices may or may not lie on a sphere. (See \Cref{fig:schoenberg_polytope} for an illustration.) 

We prove that if \(X\) admits a microscopic weighting then the vertices of \(\Delta(X)\) do lie on a sphere. In the terminology of the distance geometry literature, this says that its distance matrix is a \demph{spherical distance matrix}. What's more, the microscopic weighting specifies the centre of the sphere in barycentric coordinates, and the radius is determined by the derivative of the magnitude function at zero.

\begin{thmneg_type_schoenberg_sphere}
Let \(X\) be a finite metric space of negative type, and suppose \(X\) admits a microscopic weighting, \(\micro\). Then, given any embedding of \(\Delta(X)\) into \(\mathbb{R}^{n-1}\), the vertices \(\vec{y}_1, \ldots, \vec{y}_n\) lie on the sphere of radius \(r = \sqrt{\frac{1}{2} \mu_X'(0)}\) centered at the point
\begin{equation*}
\vec{p} = \sum_{i=1}^n \widehat{w}_i(X) \vec{y}_i.
\end{equation*}
\end{thmneg_type_schoenberg_sphere}

\Cref{thm:neg_type_schoenberg_sphere} offers an account of the microscopic weighting that is satisfying from the perspective of classical distance geometry---but one might hope for a more intrinsic interpretation, in terms of the geometry of the space \(X\) itself. For this, we follow Bj\"orck \cite{Bjoerck:Distributions-of-positive-mass}, Alexander and Solarsky \cite{Alexander-Stolarsky:Extremal-problems}, and Nickolas and Wolf \cite{Nickolas-Wolf:distance-geometry-I, Nickolas-Wolf:distance-geometry-II, Nickolas-Wolf:distance-geometry-III} in thinking of the distance function as a type of potential function.  We show in \Cref{thm:microwtg_energy_maximizing} that if a finite space \(X\) of negative type admits a microscopic weighting \(\micro\), then \(\micro\) maximizes the energy integral \(I\) determined by that potential (\Cref{def:d-energy-infinite}). Moreover, the maximal energy \(M(X)\) is precisely the derivative of the magnitude function at zero.

\begin{thmmicrowtg_energy_maximizing}
Let \(X\) be a finite metric space of negative type, and suppose \(X\) admits a microscopic weighting, \(\micro\). Then \(\micro\) maximizes the energy integral \(I\) on \(X\), and
\[M(X) = \mu'_X(0).\]
In particular, this holds for every finite metric space of strictly negative type.
\end{thmmicrowtg_energy_maximizing}

In \Cref{subsec:curvature} we offer one more interpretation of the microscopic weighting, which applies only in the particular case of a finite graph. In that setting there are various candidate notions of \demph{discrete curvature}, one of which can be related to our story. We observe in \Cref{thm:microwtg-curvature} that, if one equips a finite graph \(G\) with the \demph{resistance distance}---which is always of strictly negative type---then the microscopic weighting on \(G\) records its \demph{resistance curvature}, introduced in recent work by Devriendt and Lambiotte \cite{DevriendtLambiotte}.

In the evolution of magnitude, advances in the setting of finite spaces have tended to be followed by extensions to compact spaces of negative type, which often turn out to be especially interesting for compact subsets of Euclidean space. \Cref{sec:infinite-spaces} lays the groundwork for the theory in this paper to be extended to that setting. Both the magnitude function and the distance-energy integral have been defined for compact metric spaces of negative type, so one can ask whether the relationship between them, established for finite spaces in \Cref{thm:microwtg_energy_maximizing}, holds in that generality. In \Cref{subsec:max-energy-derivative-infinite} we observe numerically that it does hold for many odd-dimensional balls in Euclidean space, and we conjecture that it holds in general for compact spaces of strictly negative type. 

To approach that conjecture by methods similar to those in Sections \ref{sec:gaugings-concentration}--\ref{sec:geometric_interpretations}, one would need to extend the definitions of microscopic weightings, gaugings and concentration so that they satisfy an analogue of \Cref{thm:microwtg_consequences}. Because the notion of weighting is more subtle for compact spaces, it is not straightforward to define microscopic weightings in general for compact spaces of strictly negative type. For subsets of Euclidean space, however, we can attempt it, using Meckes' notion of weightings as elements of a space of distributions; this is the topic of \Cref{sec:microwtg-infinite}. We close by considering the particular example of the three-dimensional ball, for which we confirm that an analogue of \Cref{thm:microwtg_consequences} does hold.

\subsection*{Acknowledgements}

We are grateful to Yasuhiko Asao and Kouichi Taira for sharing with us an unpublished note in which they consider the question---not answered in this paper---of whether the existence of \(\lim_{t \to 0} \vec{w}(t)\) is equivalent to the one-point property. We are also grateful to the maintainers of SageMath, which helped with much experimental work.


\section{Weightings and gaugings, magnitude and concentration}
\label{sec:gaugings-concentration}

We will start by defining \emph{gaugings} and \emph{concentration} for symmetric real matrices. These come `twinned' with Leinster's \emph{weightings} and \emph{magnitude} for matrices, whose definition we recall below. Indeed, as we explain in \Cref{rem:malue_RP1}, magnitude and concentration can be considered as two charts of a single \(\R P^1\)-valued invariant of symmetric real matrices. In this paper, we will be interested primarily in the concentration of distance matrices. \Cref{eg:five-points} introduces a family of metric spaces whose distance matrices we will refer to throughout the paper.


\subsection{The magnitude and the concentration of a matrix}
\label{sec:concentration_of_matrices}

First, following Leinster~\cite{LeinsterMagnitude2013}, we define weightings and magnitude for matrices. 

\begin{definition}
\label{def:matrix_weighting}
    Let \(A\) be a symmetric \(n \times n\) real matrix. A \define{weighting} for \(A\) is a vector \(\vec{w} \in \R^n\) such that \(A \vec{w} = \onevector\).
\end{definition}

If \(A\) happens to possess two weightings, \(\vec{w}\) and \(\vec{w}'\), then
\[\sum_{i=1}^n w_i' = \onevector^\transpose \vec{w}' = (A \vec{w})^\transpose \vec{w} = \vec{w}^\transpose A \vec{w}' = \vec{w}^\transpose \onevector = \sum_{i=1}^n w_i.\]
Thus, the sum of entries in a weighting is a value independent of the weighting chosen to compute it, and the following definition makes sense.

\begin{definition}
\label{def:matrix_magnitude}
    If a symmetric \(n \times n\) real matrix \(A\) has a weighting, then the \define{magnitude} of \(A\) is defined to be the real number \(\matmag{A} = \onevector^\transpose \vec{w}\) for any weighting \(\vec{w}\). If \(A\) has no weighting, \(\matmag{A}\) is defined to be \(\infty\).
\end{definition}

In the world of metric spaces, one is usually interested in the magnitude of the similarity matrix: the magnitude function of a finite metric space \(X\), as defined in \Cref{sec:introduction}, is given for \(t > 0\) by \(\mu_X(t) = \matmag{Z(t)}\), where \(Z(t)\) is the similarity matrix of \(X\) at scale \(t\). In this paper, though, we will also be interested in weightings and magnitude for the distance matrix of \(X\), and even more interested in their \demph{gaugings} and \demph{concentration}, defined as follows.

\begin{definition}
\label{def:gauging}
    Let \(A\) be a symmetric \(n \times n\) real matrix. A \define{gauging} for \(A\) is a vector \(\vec{v} \in \R^n\) such that \(\onevector^\transpose \vec{v} = 1\) and \(A \vec{v} = c \onevector\) for some \(c \in \R\).  The number \(c\) is called the \define{concentration} of the gauging \(\vec{v}\). 
\end{definition}

If \(A\) happens to possess two gaugings \(\vec{v}\) and \(\vec{v}'\), with concentrations \(c\) and \(c'\) respectively, then
\[c = c \onevector^\transpose \vec{v}' = (\vec{v}^\transpose A^\transpose) \vec{v}' = \vec{v}^\transpose (c' \onevector) = c'.\]
Thus, concentration, like magnitude, is really an attribute of the matrix \(A\).

\begin{definition}
\label{def:concentration}
    If a symmetric real matrix \(A\) has a gauging, then the \define{concentration} of \(A\), denoted \(\con{A}\), is defined to be the concentration of any gauging. Otherwise, \(\con{A}\) is defined to be \(\infty\).
\end{definition}

Observe that if \(\vec{v}\) is a gauging for a matrix \(A\) with concentration \(c\), then for each real \(t > 0\) the vector \(\vec{v}\) is also a gauging for \(tA\) with concentration \(tc\). Note also that if \(A\) is the distance matrix of a finite metric space \(X\), then \(tA\) is the distance matrix of the scaled space \(tX\). Thus, the set of gaugings for a distance matrix is scale-independent in the following sense.

\begin{lemma}
    Let \(X\) be a finite metric space. A vector \(\vec{v}\) is a gauging for the distance matrix of \(X\) if and only if \(\vec{v}\) is a gauging for the distance matrix of \(tX\) for every \(t > 0\). \qed
\end{lemma}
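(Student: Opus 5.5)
The plan is to unwind the definitions and use the observation just made: gaugings are invariant under rescaling the matrix by a positive scalar, and only the concentration changes. Write \(D\) for the distance matrix of \(X\). Since \(tX\) has the same points as \(X\) and distances \(t\,d(x_i,x_j)\), the distance matrix of \(tX\) is exactly \(tD\). So the claim reduces to a statement about the matrices \(D\) and \(tD\).

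For the forward direction, suppose \(\vec{v}\) is a gauging for \(D\) with concentration \(c\), so that \(\onevector^\transpose \vec{v} = 1\) and \(D\vec{v} = c\onevector\). For any \(t > 0\), the normalization \(\onevector^\transpose \vec{v} = 1\) does not involve the matrix at all. Moreover \((tD)\vec{v} = t(D\vec{v}) = (tc)\onevector\). Hence \(\vec{v}\) is a gauging for \(tD\), with concentration \(tc\).

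For the converse, suppose \(\vec{v}\) is a gauging for \(tD\) for every \(t > 0\). Specializing to \(t = 1\) gives that \(\vec{v}\) is a gauging for \(1\cdot D = D\). Alternatively, if we only knew the property for a single \(t\), we could apply the forward direction with the scale factor \(1/t\): \(\frac1t(tD) = D\).

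There is no genuine obstacle here. The only point requiring a moment's care is checking that the distance matrix of \(tX\) really is \(tD\), which is immediate from the definition of \(tX\). The argument also yields the refinement \(\con{tD} = t\,\con{D}\), with the convention \(t\cdot\infty = \infty\), since the two matrices have the same set of gaugings.
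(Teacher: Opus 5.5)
Your proof is correct and matches the paper's argument: the paper states the lemma without a separate proof, relying on the observation just before it that a gauging for \(A\) with concentration \(c\) is a gauging for \(tA\) with concentration \(tc\), and that \(tD\) is the distance matrix of \(tX\). Your forward direction and the specialization to \(t=1\) for the converse make that reasoning explicit, and your refinement \(\con{tD} = t\,\con{D}\) is also correct.
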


If \(\con{A} = 0\) then \(A\) has non-trivial kernel, so it is not invertible; we record this as a lemma for future reference.

\begin{lemma}
\label{lem:val0_det0}
    If a symmetric real matrix \(A\) satisfies \(\con{A} = 0\), then \(\det(A) = 0\). \qed
\end{lemma}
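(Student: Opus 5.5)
The statement to prove is \Cref{lem:val0_det0}: if \(\con{A} = 0\), then \(\det(A) = 0\). The plan is to unwind \Cref{def:concentration} and \Cref{def:gauging} and exhibit a nonzero vector in the kernel of \(A\).

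First, observe that \(\con{A} = 0\) is in particular a finite value. By \Cref{def:concentration}, this means \(A\) possesses at least one gauging, and every gauging has concentration \(0\), using the uniqueness argument given just before that definition. So fix a gauging \(\vec{v}\). By \Cref{def:gauging}, it satisfies
\[\onevector^\transpose \vec{v} = 1 \quad\text{and}\quad A\vec{v} = 0 \cdot \onevector = \zerovector.\]

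Second, note that \(\vec{v}\) is nonzero, because its entries sum to \(1\). Hence \(\vec{v}\) is a nonzero element of \(\ker A\). It follows that \(A\) is not injective, hence not invertible, and so \(\det(A) = 0\).

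No step is a genuine obstacle. The only point needing care is that concentration zero presupposes the existence of a gauging, as opposed to the value \(\infty\) assigned when no gauging exists. The normalisation \(\onevector^\transpose \vec{v} = 1\) is exactly what guarantees that the kernel vector is nonzero.
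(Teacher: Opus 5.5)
Your proof is correct and matches the paper's (one-line) reasoning: concentration zero provides a gauging \(\vec{v}\) with \(A\vec{v} = \zerovector\), and \(\onevector^\transpose \vec{v} = 1\) forces \(\vec{v} \neq \zerovector\), so \(A\) has non-trivial kernel and \(\det(A) = 0\).
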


As we will see in \Cref{sec:geometric_interpretations}, vectors \(\vec{v}\) and scalars \(c\) satisfying \Cref{def:gauging} find applications in various corners of metric geometry. Despite that, they seem not to have acquired stable names. We have chosen the term \emph{concentration} for \(c\) to reflect the fact that the magnitude and the concentration of a matrix are reciprocal.

\begin{theorem}
\label{thm:mag_val_reciprocal}
    For every symmetric real matrix \(A\) we have
    \[
        \matmag{A} = 1/\con{A},
    \]
    where we take \(1/0 \coloneq \infty\) and \(1/\infty \coloneq 0\).
\end{theorem}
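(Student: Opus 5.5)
The plan is to split into cases according to whether \(A\) has a weighting with nonzero magnitude, zero magnitude, or no weighting at all. In each case I will check the claimed identity against \Cref{def:matrix_magnitude} and \Cref{def:concentration}. The basic tool is the rescaling correspondence between weightings and gaugings: if \(\vec{w}\) is a weighting with \(\onevector^\transpose\vec{w} = m \neq 0\), then \(\vec{v} = \vec{w}/m\) satisfies \(\onevector^\transpose \vec{v} = 1\) and \(A\vec{v} = (1/m)\onevector\). Conversely, if \(\vec{v}\) is a gauging with concentration \(c \neq 0\), then \(\vec{w} = \vec{v}/c\) satisfies \(A\vec{w} = \onevector\) and \(\onevector^\transpose \vec{w} = 1/c\). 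These two observations settle every case in which both quantities are finite and nonzero: \(\matmag{A} = m\) forces \(\con{A} = 1/m\), and vice versa.

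The remaining work is the degenerate cases, which I will treat as follows.

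\begin{enumerate}
\item \(\matmag{A} = \infty\), i.e.\ \(\onevector \notin \im A\). I need \(\con{A} = 0\), so I must produce a gauging with concentration \(0\). Since \(A\) is symmetric, \(\im A = (\ker A)^\perp\). So \(\onevector \notin \im A\) means there is some \(\vec{u} \in \ker A\) with \(\onevector^\transpose \vec{u} \neq 0\). Normalizing, \(\vec{v} = \vec{u}/(\onevector^\transpose\vec{u})\) is a gauging with \(A\vec{v} = 0\), hence \(\con{A} = 0 = 1/\infty\).
\item \(\con{A} = 0\). Then there is \(\vec{v}\) with \(A\vec{v} = 0\) and \(\onevector^\transpose\vec{v} = 1\). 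Suppose a weighting \(\vec{w}\) existed. Then \(1 = \onevector^\transpose \vec{v} = \vec{w}^\transpose A \vec{v} = 0\), a contradiction. So \(\matmag{A} = \infty\).
\item \(\matmag{A} = 0\), i.e.\ a weighting exists with entry sum \(0\). I need \(\con{A} = \infty\), i.e.\ no gauging. Suppose \(\vec{v}\) were a gauging with concentration \(c\). Then \(1 = \onevector^\transpose\vec{v} = \vec{w}^\transpose A\vec{v} = c\,\vec{w}^\transpose \onevector = 0\), a contradiction.
\item \(\con{A} = \infty\). By case (1), \(A\) has a weighting, since otherwise \(\con{A} = 0\). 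If its magnitude were nonzero, the rescaling above would give a gauging. Hence \(\matmag{A} = 0\).
\end{enumerate}

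The only step that is not pure bookkeeping is case (1). It needs the existence of a concentration-zero gauging when \(\onevector\) is not in the image of \(A\), and this is exactly where symmetry enters through \(\im A = (\ker A)^\perp\). The rest follows from the same pairing trick \(\onevector^\transpose \vec{v} = \vec{w}^\transpose A \vec{v}\) used for well-definedness earlier in the paper. I expect no real obstacle, only care that the cases are exhaustive. They are: either \(\matmag{A} = \infty\), or a weighting exists and its magnitude is zero or nonzero.
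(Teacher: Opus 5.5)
Your proof is correct and follows essentially the same route as the paper's: rescaling handles the finite non-zero case, and the key degenerate case \(\matmag{A} = \infty \Rightarrow \con{A} = 0\) rests, as in the paper, on the orthogonal splitting \(\R^n = \im(A) \oplus \ker(A)\) for symmetric \(A\). The only minor difference is that you prove \(\matmag{A} = 0 \Rightarrow \con{A} = \infty\) by a direct pairing argument, whereas the paper obtains it by elimination from \(\matmag{A} = \infty \iff \con{A} = 0\); your cases (2) and (4) are redundant given the trichotomy on \(\matmag{A}\), but harmless.
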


\begin{proof}
    If \(\matmag{A}\) and \(\con{A}\) are both finite and non-zero, it is clear that \(\vec{w}\) is a weighting for \(A\) if and only if \(\frac{1}{\matmag{A}} \vec{w}\) is a gauging for \(A\) with concentration \(\frac{1}{\matmag{A}}\), so the theorem holds. It follows that if either \(\matmag{A}\) or \(\con{A}\) is zero or infinite, the other must also be zero or infinite; we just need to see that in this case they are not equal. We will establish this by showing that \(\matmag{A} = \infty\) if and only if \(\con{A} = 0\).
    
    Recall that for every symmetric \(n \times n\) real matrix \(A\) there is an orthogonal splitting of \(\R^n\) as \(\im(A) \oplus \ker(A)\), which is the splitting into zero and non-zero eigenspaces.  In particular, there is a unique decomposition of the vector \(\onevector \in \R^n\) as \(\onevector = \vec{u} + \vec{v}\)    where \(\vec{u} = A\vec{w}\) for some \(\vec{w} \in \R^n\) (which is not necessarily unique) and \(A \vec{v} = \vec{0}\). Orthogonality says that \(0 = \vec{u}^\transpose \vec{v} = \vec{w}^\transpose A \vec{v}\), so 
    \(\onevector^\transpose \vec{v} = (\vec{u} + \vec{v})^\transpose \vec{v} =  \vec{v}^\transpose \vec{v} = \left| \vec{v}\right|^2\).

    Suppose that \(\matmag{A} = \infty\). This means precisely that \(A\) has no weighting, so \(\onevector \not \in \im(A)\) and hence \(\vec{v} \not = \vec{0}\) in the decomposition of \(\onevector\). It follows that in this case we have \(\onevector^\transpose \vec{v} = \left| \vec{v}\right|^2 \not = 0 \), and \(\bigl(1/ \left| \vec{v}\right|^2\bigr) \vec{v}\) is a gauging for \(A\) of concentration 0. Conversely, suppose that \(\con{A} = 0\). Then there is a gauging \(\vec{v}\) with \(A \vec{v} = 0\) and \(\onevector^\transpose \vec{v} = 1\), so \(\onevector\) is not orthogonal to the kernel of \(A\) and therefore cannot be in the image of \(A\). Thus, \(A\) has no weighting and \(\matmag{A} = \infty\).
\end{proof}

\begin{remark}
\label{rem:malue_RP1}
\Cref{thm:mag_val_reciprocal} entitles us to think of magnitude and concentration as two charts of a single \(\R P^1\)-valued invariant of symmetric real matrices. Given such a matrix \(A\), let us say (for the purposes of this remark only) that \(\vec{g} \in \R^n\) is a \define{gating} if 
    \[
    A \vec{g} = r \onevector
    \quad
    \text{and}
    \quad
    \onevector^\transpose \vec{g} = s
    \quad
    \text{for some } 
    (r, s) \in \R^2 \backslash (0, 0).
    \]
By~\Cref{thm:mag_val_reciprocal}, we know that every symmetric real matrix \(A\) has either a weighting or a gauging, or both, and hence has a gating. Define the \define{hinge} of \(A\) to be the element \([r : s]\) of \(\R P^1\). This is a well-defined attribute of the matrix \(A\), as if \(\vec{g}'\) is another gating then 
\[r's = \vec{g}'^\transpose \onevector s =  \vec{g}'^\transpose A \vec{g} =  s' \onevector^\transpose \vec{g} = s' r,\] 
so \([r : s] = [r' : s']\) in \(\R P^1\).  If \(s \not = 0\) then \(\frac{1}{s} \vec{g}\) is a weighting for \(A\) and \(\matmag{A} = r/s\); if \(r \not = 0\) then \(\frac{1}{r} \vec{g}\) is a gauging for \(A\) and \(\con{A} = s/r\).
\end{remark}

The following two-parameter family of five-point metric spaces will provide a valuable source of examples throughout the paper.

\begin{example}
\label{eg:five-points}
    For \(0 < a, b \le 2\), let \(P(a, b)\) be the metric space with distance matrix
\[
    D^{(a, b)}
    =
    \begin{pmatrix}
        0 & a & a & 1 & 1 \\
        a & 0 & a & 1 & 1 \\
        a & a & 0 & 1 & 1 \\
        1 & 1 & 1 & 0 & b \\
        1 & 1 & 1 & b & 0 \\
    \end{pmatrix}.
\]
    We can picture \(P(a, b)\) in terms of the graph in \Cref{fig:graph-family}; the unmarked edges have length \(1\).  This figure also shows the part of the parameter space where \(1 \le a, b \le 2\).

    For \(0 < a,b \le 2\), we have \(\det(D^{(a, b)}) = 2(3 - ab)a^2b\), while
    \[
    \con*{D^{(a, b)}} = 
        \begin{cases}
            1
            &
            \text{if } (a, b) = (3/2, 2)
            \\[0.5em]
            \dfrac{2(ab-3)}{4a + 3b - 12} 
            & 
            \text{otherwise.}
            \\[0.4em]
        \end{cases}
    \]    
    A gauging for \(D^{(a, b)}\) is given by 
    \[
        \vec{v} 
        = 
        \begin{cases}
            \tfrac{1}{4a + 3b - 12}(b-2, b-2, b-2, 2a - 3, 2a - 3)^\transpose
            &
            \text{when } 4a + 3b \neq 12
            \\[0.1em]
            (1/6, 1/6, 1/6, 1/4, 1/4)^\transpose
            &
            \text{when } (a, b) = (3/2, 2);    
        \end{cases}
    \]
    when \(4a+3b = 12\) and \((a,b) \neq (3/2,2)\), there is no gauging. When \((a,b) = (3/2,2)\) the gauging is not unique, because \(D^{(a,b)}\) is singular.
    
    In \Cref{fig:graph-family} the dotted red line is the line \(4a + 3b = 12\), on which every space but one has \(\con*{D^{(a,b)}} = \infty\). The dashed blue curve is the curve where \(ab = 3\); here, every space but one has \(\con*{D^{(a,b)}} = 0\). (These also happen to be the only spaces in this family for which \(\det\bigl(D^{(a,b)}\bigr) = 0\).) The exceptional space is \(P_{\mathrm{E}} = P(3/2, 2)\), which lies at the intersection of the red and blue lines, but has \(\con*{D^{(3/2, 2)}} = 1\).

    We will return repeatedly to this family of spaces, looking more closely at \(P_{\mathrm{A}}\), \(P_{\mathrm{B}}\), \(P_{\mathrm{C}}\), \(P_{\mathrm{D}}\) and \(P_{\mathrm{E}}\) in \Cref{eg:5pt-spaces-1pt-prop}. In \Cref{prop:PQ-negative-type} we will see that the shaded area is where the metric spaces are of strictly negative type, while spaces on the dotted red line are of negative type, but not of strictly negative type.
\end{example}

\tikzstyle{point} = [circle, draw, fill=blue!80, 
    text width=0.5em, text centered, 
    node distance=3cm, inner sep=0pt]
\tikzstyle{line} = [draw, -]

\begin{figure}
    \centering
    
\begin{tikzpicture}[baseline=.1em]
\coordinate (A1) at (.3,1.7); 
\coordinate (A2) at (2.7,1.7); 
\coordinate (B1) at (-.5,0); 
\coordinate (B2) at (1.5,-1.5); 
\coordinate (B3) at (3.5,0); 

\draw [thick] (A1)--(B1)--(A2)--(B2)--(A1)--(B3)--(A2);
\draw[thick] (A1) -- node[above] {$b$} (A2);
\draw[thick] (B1) -- node[above] {$a$} (B2);
\draw[thick] (B2) -- node[above] {$a$} (B3);
\draw[thick] (B3) -- node[above] {$a$} (B1);
\filldraw[draw=black, fill=black] (A1) circle [radius=0.1]; 
\filldraw[draw=black, fill=black] (A2) circle [radius=0.1]; 
\filldraw[draw=black, fill=black] (B1) circle [radius=0.1]; 
\filldraw[draw=black, fill=black] (B2) circle [radius=0.1]; 
\filldraw[draw=black, fill=black] (B3) circle [radius=0.1]; 
\end{tikzpicture}
    \qquad
    \begin{tikzpicture}[baseline=5em]
        \begin{axis}[
            width = 0.6\textwidth,
            axis equal image=true,
            axis x line=center, 
            axis y line=left,
            axis line shift=10pt, 
            xmin=0.95, ymin=0.95, 
            ymax=2.2, xmax=2.2,
            xtick={0,...,2}, 
            xticklabels={0, 1, 2},
            ytick={1,2}, 
            yticklabels={1,2},
            x axis line style={style = -}, 
            y axis line style={style = -},
            xlabel=$a$, 
            ylabel=$b$,
            y label style={at={(current axis.above origin)}, anchor=south, rotate=-90},
            ]
            \fill[fill=blue!20] (0.95, 0.95) -- (0.95, 2) -- (1.5, 2) -- (2, 4/3) -- (2, 0.95) -- cycle;
            \fill[fill=white] (2, 2) --  (1.5, 2) -- (2, 4/3) --  cycle;
            \addplot [color=red, ultra thick, dotted, domain=1.5:2] {4 - 4/3*x}; 
            \addplot [color=blue, ultra thick, dashed, domain=1.5:2] {3/x};
            \addplot [domain=0:2] {2};
            \addplot [color=black, thin, domain=0:2] ({2}, {x});
            \node [point, label=right:{\(P_{\mathrm{C}}\)}] (emily) at (2, 4/3) {};
            \node [point, label={\(P_{\mathrm{B}}\)}] (K) at (2, 2) {};
            \node [point, label={\(P_{\mathrm{E}}\)}] (W) at (3/2, 2) {};
            \node [point, label=right:{\(P_{\mathrm{D}}\)}] (detzero) at (2, 3/2) {};
            \node [point, label=right:{\(P_{\mathrm{A}}\)}] (euclidean) at ({sqrt(3/2)}, 1) {};
        \end{axis}
    \end{tikzpicture}
    \caption{The five-point metric space \(P(a,b)\) described in \Cref{eg:five-points}. Unmarked edges represent distances equal to 1, while \(a\) and \(b\) can take any values in the interval \((0,2]\). On the right is part of the parameter space. See Examples \ref{eg:five-points} and \ref{eg:5pt-spaces-1pt-prop} for discussion.
    }
    \label{fig:graph-family}
\end{figure}


\section{Microscopic weightings}
\label{sec:microscopic_weightings}

We turn now to the main character of our story: the \emph{microscopic weighting} on a finite metric space. Recall from \Cref{sec:introduction} that for all sufficiently small \(t > 0\) the similarity matrix \(Z(t)\) is invertible, so the scaled space \(tX\) carries the unique weighting \(\vec{w}(t) = Z(t)^{-1} \onevector\). The microscopic weighting on \(X\) is defined to be
\[\micro \coloneq \lim_{t \to 0} \vec{w}(t)\]
when that limit exists. In this section and the next, we will address the question of which finite metric spaces admit a microscopic weighting. We begin with some immediate observations which imply, in particular, that not quite every finite metric space does admit a microscopic weighting---and which begin to account for our interest in gaugings and concentration.


\subsection{The existence of microscopic weightings}

A metric space \(X\) is said to have the \demph{one-point property} if its magnitude function satisfies \(\lim_{t \to 0} \mu_X(t) = 1\). Given the interpretation of \(\mu_X(t)\) as recording the effective number of points in \(X\) at scale \(t\), you might expect that every finite space should have the one-point property: that at very small scales every finite space should be `effectively' a single point. This is very nearly the case, but not quite. 

That the one-point property can fail was demonstrated first by an example due to Willerton \cite[Example~2.2.8]{LeinsterMagnitude2013}. More recently, Roff and Yoshinaga have shown that almost every finite metric space has the one-point property \cite[Theorem~2.3]{RoffYoshinaga}, but that its failure can be arbitrarily bad, in the sense that \(\lim_{t \to 0} \mu_X(t)\) can take any value in the interval \([1, \infty)\) \cite[Theorem~3.8]{RoffYoshinaga}. That has wider consequences for the continuity of magnitude, which are explored in \cite{KatsumasaRoffYoshinaga}.

Our first result about existence says, among other things, that if a space admits a microscopic weighting, then it has the one-point property. Thus, not every finite metric space does admit a microscopic weighting. 

\begin{theorem}
\label{thm:microwtg_consequences}
For every finite metric space \(X\) that admits a microscopic weighting, the following statements hold.
    \begin{enumerate}
        \item
        \label{thm:microwtg_consequences_1}
        The space \(X\) has the one-point property: \(\lim_{t \to 0} \mu_X(t) = 1\).
        
        \item
        \label{thm:microwtg_consequences_2}
        The magnitude function \(\mu_X\) is differentiable at zero.

        \item
        \label{thm:microwtg_consequences_3}
        The microscopic weighting \(\micro\) is a gauging for \(D\) with concentration \(\mu_X'(0)\), where \(\mu_X'\) is the first derivative of \(\mu_X\).
    \end{enumerate}
\end{theorem}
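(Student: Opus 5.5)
Starting from the weighting equation \(Z(t)\vec{w}(t) = \onevector\), valid for all \(0 < t < \epsilon\), I will pass to the limit \(t \to 0\) at zeroth and first order in \(t\). The matrix \(Z(t)\) is entrywise analytic in \(t\). Its expansion is
\[
Z(t) = \onematrix - tD + t^2 R(t),
\]
where \(R(t)\) has entries \(R(t)_{ij} = \bigl(e^{-td_{ij}} - 1 + td_{ij}\bigr)/t^2\). These entries are bounded as \(t \to 0\), with \(|R(t)_{ij}| \le d_{ij}^2/2\). By hypothesis \(\vec{w}(t) \to \micro\), so \(\vec{w}(t)\) is bounded near \(0\). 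Hence \(t^2 R(t)\vec{w}(t) = O(t^2)\).

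For part (1), apply \(\onevector^\transpose\) to nothing; just take the limit of the weighting equation directly. This gives \(\onematrix\,\micro = \onevector\), that is, \((\onevector^\transpose \micro)\onevector = \onevector\), so \(\onevector^\transpose\micro = 1\). Since \(\mu_X(t) = \onevector^\transpose \vec{w}(t)\) is continuous in \(\vec{w}(t)\), it follows that \(\mu_X(t) \to \onevector^\transpose \micro = 1\), which is the one-point property. We then extend \(\mu_X\) to \(0\) by \(\mu_X(0) = 1\), which is the natural meaning of differentiability at zero.

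For parts (2) and (3), substitute the expansion into the weighting equation and rearrange:
\[
\bigl(\onevector^\transpose \vec{w}(t) - 1\bigr)\onevector = tD\vec{w}(t) - t^2 R(t)\vec{w}(t).
\]
The left-hand side equals \((\mu_X(t) - 1)\onevector\). Dividing by \(t\) gives
\[
\frac{\mu_X(t) - 1}{t}\,\onevector = D\vec{w}(t) - tR(t)\vec{w}(t).
\]
As \(t \to 0^+\), the right-hand side converges to \(D\micro\). Hence the difference quotient \((\mu_X(t) - \mu_X(0))/t\) converges; this is part (2). Call the limit \(\mu_X'(0)\), understood as a one-sided derivative. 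Every coordinate of the left-hand side is this same quotient, so \(D\micro = \mu_X'(0)\onevector\). Together with \(\onevector^\transpose\micro = 1\), this says exactly that \(\micro\) is a gauging for \(D\) with concentration \(\mu_X'(0)\), which is part (3).

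The main obstacle is modest: making sure the remainder term really vanishes. It is controlled by the boundedness of \(\vec{w}(t)\), which follows from convergence, together with the uniform bound on \(R(t)\). Beyond that, one only needs to be explicit that differentiability at zero means the one-sided derivative of \(\mu_X\) extended by \(\mu_X(0) = 1\).
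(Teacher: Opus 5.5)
Your proof is correct. Part (1) is argued exactly as in the paper, but for parts (2) and (3) you take a different route.

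The paper differentiates the weighting equation to get \(-Z'(t)\vec{w}(t) = Z(t)\vec{w}'(t)\), uses \(Z'(0) = -D\), and evaluates the limit of the right-hand side as \(\onematrix \lim_{t \to 0}\vec{w}'(t)\). From this it concludes that \(D\micro = \bigl(\lim_{t\to 0}\mu_X'(t)\bigr)\onevector\).

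You never differentiate \(\vec{w}\). Your expansion \(Z(t) = \onematrix - tD + t^2R(t)\), with a uniformly bounded remainder, is divided through by \(t\). This puts the difference quotient \((\mu_X(t)-1)/t\) directly on the left. The only input needed is that \(\vec{w}(t)\) is bounded near \(0\), which follows from the hypothesis.

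That is a genuine gain in rigour. The paper's chain of equalities tacitly assumes that \(\lim_{t\to 0}\vec{w}'(t)\) exists. This is not part of the hypothesis and needs a further argument. For instance, each entry of \(\vec{w}(t)\) is a quotient of entire functions of \(t\), so a finite limit at \(0\) forces it to extend analytically across \(0\).

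In exchange, once that argument is supplied, the paper's route delivers the reading of part (2) used in the remark after the theorem: that \(\lim_{t \to 0}\mu_X'(t)\) exists. Yours delivers the one-sided derivative at \(0\) of the extension with \(\mu_X(0) = 1\). The two agree. If the limit of the derivative exists, it equals your one-sided derivative by the mean value theorem, and the same analyticity observation shows that it does exist here.
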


\begin{proof}
    We know that \(Z(t)\) invertible for all sufficiently small \(t>0\), so we assume \(t\) is in that range here.  We also know that \(\lim_{t \to 0} Z(t) = \onevector \onevector^\transpose\), the matrix whose entries are all 1. Thus, if \(X\) admits the microscopic weighting \(\micro\), we have
    \[
        \lim_{t \to 0} \bigl(Z(t) \vec{w}(t)\bigr) 
        =
        \lim_{t \to 0} Z(t) \lim_{t \to 0}\vec{w}(t) 
        =
        (\onevector \onevector^\transpose) \micro
        = 
        \onevector (\onevector^\transpose \micro)
        =
        (\onevector^\transpose \micro) \onevector.
    \]
    Taking the limit as \(t \to 0\) of the parametrized weight equation
    \begin{equation}
    \label{eq:parametrized_weight_equation}
        Z(t)\vec{w}(t) = \onevector,
    \end{equation}
    we see that \(\onevector^\transpose \micro = 1\), so the sum of entries in \(\micro\) is \(1\). Thus,
    \[
        \lim_{t \to 0}\mu_X(t)
        =
        \lim_{t \to 0} \bigl( \onevector^\transpose \vec{w}(t) \bigr)
        =
        \onevector^\transpose \Bigl( \lim_{t \to 0} \vec{w}(t) \Bigr)
        =
        \onevector^\transpose \micro
        =
        1,
    \]
    which says that \(X\) has the one-point property. This proves part (\ref{thm:microwtg_consequences_1}).

    For parts~(\ref{thm:microwtg_consequences_2}) and~(\ref{thm:microwtg_consequences_3}), differentiate equation \eqref{eq:parametrized_weight_equation} entry-wise to see that
    \begin{equation}
    \label{eq:differentiate_1}
        -Z'(t)\vec{w}(t) = Z(t)\vec{w}'(t).
    \end{equation}
    We can write \(Z(t)\) as \(\exp_{{\odot}}(-tD)\), where  \(\odot\) denotes the Hadamard, or entry-wise, product, so differentiating this give
    \(
        Z'(t) = -D \odot Z(t)
    \).
    Hence,
    \(Z'(0) = -D\), and the left-hand side of~\eqref{eq:differentiate_1} converges  to \(D\micro\) as \(t \to 0\).  Taking the limit as \(t \to 0\) in~\eqref{eq:differentiate_1} we obtain
    \begin{align*}
        D\micro 
        &= \lim_{t\to 0} Z(t) \lim_{t\to 0}\vec{w}'(t) 
        = \onevector \onevector^\transpose \lim_{t \to 0} \vec{w}'(t)
        = \onevector \Bigl(\lim_{t \to 0} \onevector^\transpose \vec{w}'(t)\Bigr)
        \\
        &=
        \Bigl(\lim_{t \to 0} \onevector^\transpose \tfrac{\operatorname{d}}{\operatorname{d}t}\vec{w}(t) \Bigr) \onevector
        =
        \Bigl(\lim_{t \to 0} \tfrac{\operatorname{d}}{\operatorname{d}t}\bigl(\onevector^\transpose \vec{w}(t)\bigr) \Bigr) \onevector
        =
        \Bigl(\lim_{t \to 0} \mu_X'(t) \Bigr) \onevector.
    \end{align*}
    Since \(\onevector^\transpose \micro = 1\) by part (\ref{thm:microwtg_consequences_1}), it follows that  \(\micro\) is a gauging for \(D\), with concentration
    \(\con{D} = \lim_{t \to 0} \mu_X'(t) = \mu_X'(0)\).
\end{proof}

From Part (\ref{thm:microwtg_consequences_3}) of \Cref{thm:microwtg_consequences}, we obtain the following statement---since, if \(\con{D} = \infty\), then \(D\) has no gauging.

\begin{cor}
\label{cor:magD_zero_microweighting_does_not_exist}
    Let \(X\) be a finite metric space whose distance matrix \(D\) has \(\con{D} = \infty\). Then \(X\) does not admit a microscopic weighting. \qed
\end{cor}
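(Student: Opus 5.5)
The corollary is the contrapositive of part~(\ref{thm:microwtg_consequences_3}) of \Cref{thm:microwtg_consequences}, so I would prove it by contradiction. Suppose \(X\) admits a microscopic weighting \(\micro\).

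\Cref{thm:microwtg_consequences} then gives two facts: \(\onevector^\transpose \micro = 1\), and \(D\micro = \mu_X'(0)\onevector\). The number \(\mu_X'(0)\) is a real number, since part~(\ref{thm:microwtg_consequences_2}) says \(\mu_X\) is differentiable at zero. Indeed, the proof of that theorem exhibits \(\mu_X'(0)\) as the finite limit \(\lim_{t\to0}\mu_X'(t)\), which arises as the coefficient of \(\onevector\) in the vector \(D\micro\).

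By \Cref{def:gauging}, these two facts say precisely that \(\micro\) is a gauging for \(D\). \Cref{def:concentration} then gives \(\con{D} = \mu_X'(0) \in \R\), so in particular \(\con{D} \neq \infty\). This contradicts the hypothesis that \(\con{D} = \infty\), so \(X\) admits no microscopic weighting.

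There is no real obstacle here. The only point to check is that the concentration produced by the theorem is a genuine real number and not \(\infty\), and this holds because it is a coefficient in the finite vector \(D\micro\).
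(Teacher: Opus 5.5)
Your proposal is correct and matches the paper's argument: the paper deduces the corollary directly from part~(\ref{thm:microwtg_consequences_3}) of \Cref{thm:microwtg_consequences}, noting that a microscopic weighting would be a gauging for \(D\), and no gauging exists when \(\con{D} = \infty\). Your extra check that \(\mu_X'(0)\) is a real number is harmless but already automatic, since by \Cref{def:gauging} every gauging has real concentration.
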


In fact, we conjecture that this is the \emph{only} obstruction to the existence of a microscopic weighting.

\begin{conj}
\label{conj:magD_nonzero_microweighting_exists}
A finite metric space \(X\) has a microscopic weighting if and only if its distance matrix satisfies \(\con{D} \neq \infty\).
\end{conj}

In \Cref{subsec:microwtg-invertible} we will prove that \Cref{conj:magD_nonzero_microweighting_exists} holds under the assumption that \(D\) is invertible. This guarantees that a microscopic weighting exists for almost every finite metric space. In \Cref{sec:negative_type_spaces} we will deduce, in particular, that every finite space of strictly negative type admits a microscopic weighting. Indeed, we will see that a finite space of negative type is of strictly negative type if and only if \(\det(D) \neq 0\) and \(\con{D} \neq \infty\).

\begin{remark}
Part \eqref{thm:microwtg_consequences_1} of \Cref{thm:microwtg_consequences} says that the existence of a microscopic weighting on \(X\) implies the one-point property, but we do not know whether the converse is true. It is conceivable that some cancellation could cause \(\mu_X(t) = \onevector^\transpose \vec{w}(t)\) to converge to 1 as \(t \to 0\), even as the entries of \(\vec{w}(t)\) diverge. Similarly, though part \eqref{thm:microwtg_consequences_2} of the theorem says that every space with a microscopic weighting satisfies \(\lim_{t \to 0} \mu_X'(t) \neq \infty\), we do not know whether the converse statement is true.
\end{remark}

\begin{example}
    For each space \(P(a,b)\) in the family described in~\Cref{eg:five-points}, you can verify directly that a weighting at scale \(t\) is given by \(\tfrac{1}{\delta}(\psi_1, \psi_1, \psi_1, \psi_2, \psi_2)\) where
    \begin{gather*}
        \psi_1 = (e^{b t + t} - 2 e^{b t} + e^{t}) e^{a t + t},
        \quad
        \psi_2 = (e^{a t + t} - 3 e^{a t} + 2 e^{t}) e^{b t + t},
        \\
        \delta = e^{a t + b t + 2 t} - 6 e^{a t + b t} + e^{a t + 2 \, t} + 2 \, e^{b t + 2 \, t} + 2 e^{2 t}.
    \end{gather*}
    Taking the limit as \(t \to 0\) you can verify that the microscopic weighting is precisely the gauging specified in~\Cref{eg:five-points}.
\end{example}

\begin{example}
\label{eg:5pt-spaces-1pt-prop} 
    In \Cref{fig:magnitude-function-plots} we plot the magnitude function for small \(t\), for the spaces corresponding to the points labelled \(P_{\mathrm{A}}\), \(P_{\mathrm{B}}\), \(P_{\mathrm{C}}\) and \(P_{\mathrm{D}}\) in \Cref{fig:graph-family}. Using the computations in \Cref{eg:five-points} we can make the following observations.

    For \(P_{\mathrm{A}} = P(\sqrt{3/2}, 1)\) and \(P_{\mathrm{B}} = P(2,2)\) the distance matrix has finite and non-zero concentration, and we can see from the plots of their magnitude functions that the one-point property holds in each case. In other respects, these spaces are quite different. The space \(P_{\mathrm{A}}\) is as nice as possible: it embeds isometrically into Euclidean space \(\R^4\), so is of strictly negative type. The space \(P_{\mathrm{B}}\) is given by the shortest-path metric on the complete bipartite graph \(K_{3,2}\), and is not even of negative type.

    The distance matrix of \(P_{\mathrm{C}} = P(2,4/3)\) has infinite concentration, and it is clear from the plot of its magnitude function that \(P_{\mathrm{C}}\) lacks the one-point property. (This space appears in \cite[Example~3.4]{RoffYoshinaga} as a `smallest' space for which the one-point property fails.) Meanwhile, the distance matrix of \(P_{\mathrm{D}}=P(2,3/2)\) has zero concentration, but the one-point property does hold for \(P_{\mathrm{D}}\). Thus, \(\con{D} = 0\) does not imply failure of the one-point property. 
\end{example}

\begin{figure}
    \pgfplotsset{
        every axis/.style={
            xmin=0, xmax=0.55,
            ymin=0.5, ymax=2.1,
            width = 0.6\textwidth,
            axis equal image=true,
            axis x line=bottom, 
            axis y line = left,
            xtick={0, 0.5}, 
            xticklabels={$0$, $0.5$},
            ytick={0.5, 1, 1.5, 2}, 
            yticklabels={0.5, 1, 1.5, 2},
            x axis line style={style = -},
            y axis line style={style = -},
        },
    } 
    \begin{tikzpicture}[baseline=5em]
            \begin{axis}[title={\(P_{\mathrm{A}}\)}]
                \addplot[mark=none] file {mag_limit_E.dat};
                \addplot[mark=none,red,dashed] expression {1};
            \end{axis}
    \end{tikzpicture}
    ~
    \begin{tikzpicture}[baseline=5em]
            \begin{axis}[title={\(P_{\mathrm{B}}\)}]
                \addplot[mark=none] file {mag_limit_B_1.dat};
                \addplot[mark=none] file {mag_limit_B_2.dat};
                \addplot[mark=none,red,dashed] expression {1};
            \end{axis}
    \end{tikzpicture}
    ~
    \begin{tikzpicture}[baseline=5em]
            \begin{axis}[title={\(P_{\mathrm{C}}\)}]
                \addplot[mark=none] file {mag_limit_A.dat};
                \addplot[mark=none,red,dashed] expression {1};
            \end{axis}
    \end{tikzpicture}
    ~
    \begin{tikzpicture}[baseline=5em]
            \begin{axis}[title={\(P_{\mathrm{D}}\)}]
                \addplot[mark=none] file {mag_limit_D_1.dat};
                \addplot[mark=none] file {mag_limit_D_2.dat};
                \addplot[mark=none,red,dashed] expression {1};
            \end{axis}
    \end{tikzpicture}
    \caption{The small-scale behaviour of the magnitude function for four spaces in the family \(P(a,b)\). See \Cref{eg:5pt-spaces-1pt-prop} for discussion.}
    \label{fig:magnitude-function-plots}
\end{figure}


\subsection{Proof of Conjecture 3.3 when the distance matrix is invertible}
\label{subsec:microwtg-invertible}

Consider a finite metric space \(X\) whose distance matrix \(D\) happens to be invertible. In this case there is a unique weighting for \(D\), namely \(D^{-1} \onevector\). Also, by \Cref{lem:val0_det0}, we have \(\con{D} \neq 0\). It follows that if \(\con{D} \not = \infty\) then there is a unique gauging for \(D\), and thus a unique candidate for a microscopic weighting on \(X\), namely \(\con{D} D^{-1} \onevector\). We are going to prove that this vector does in fact satisfy
\[\lim_{t \to 0} \vec{w}(t) = \con{D} D^{-1} \onevector.\]
We will use the following expression for the weighting \(\vec{w}(t)\) on \(tX\), which is essentially contained in the proof of Roff and Yoshinaga's \cite[Theorem~2.3]{RoffYoshinaga}, though they describe the magnitude of \(tX\) rather than its weighting.

\begin{lemma}
\label{lem:expansion}
    For \(0 < t \ll 1\), the \(j^\th\) entry of \(\vec{w}(t)\) is given by
    \begin{equation}
    \label{eq:expansions}
        \frac{
        \sum_{p=0}^\infty \left( \sum_{k_1 + \cdots + k_{j-1} + k_{j+1} + \cdots k_n=p} \frac{1}{k_1!\cdots k_n!}
        \det
        \begin{pmatrix}
        \vec{d}_1^{\odot k_1},
        \vec{d}_2^{\odot k_2},
        \cdots,
        \vec{d}_j^{\odot 0},
        \cdots,
        \vec{d}_n^{\odot k_n}
        \end{pmatrix} \right) (-t)^{p}
        }
        {
        \sum_{p=0}^\infty 
        \left(
        \sum_{k_1+ \dots +k_n=p} \frac{1}{k_1!\cdots k_n!}
        \det
        \begin{pmatrix}
        \vec{d}_1^{\odot k_1}, 
        \vec{d}_2^{\odot k_2}, 
        \cdots, 
        \vec{d}_n^{\odot k_n}
        \end{pmatrix}
        \right)
        (-t)^{p}
        }
    \end{equation}
    where \(\vec{d}_i^{\odot k}\) denotes the \(i^\th\) column of \(D\), raised to the \(k^\th\) power entry-wise.
\end{lemma}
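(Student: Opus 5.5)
The plan is to apply Cramer's rule to the weight equation \(Z(t)\vec{w}(t) = \onevector\) and expand every determinant that appears by multilinearity in the columns. Take \(t\) small enough that \(\det Z(t) \neq 0\). Then
\[
w_j(t) = \frac{\det Z_j(t)}{\det Z(t)},
\]
where \(Z_j(t)\) is \(Z(t)\) with its \(j^\th\) column replaced by \(\onevector\). Since \(\onevector = \vec{d}_j^{\odot 0}\), the numerator and denominator have the same form. In the denominator every column is exponentiated. In the numerator the \(j^\th\) column is frozen at exponent \(0\).

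The first step is to write each column of \(Z(t)\) as an entrywise exponential:
\[
\exp_{\odot}(-t\vec{d}_i) = \sum_{k=0}^\infty \frac{(-t)^k}{k!}\,\vec{d}_i^{\odot k}.
\]
This series converges absolutely for every \(t\).

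The second step is to use multilinearity of \(\det\) in the columns, one column at a time. This expands \(\det Z(t)\) as
\[
\sum_{k_1,\dots,k_n \ge 0} \frac{(-t)^{k_1+\cdots+k_n}}{k_1!\cdots k_n!}\det\bigl(\vec{d}_1^{\odot k_1},\dots,\vec{d}_n^{\odot k_n}\bigr).
\]
Doing the same for \(\det Z_j(t)\) gives the same kind of sum, but with no summation index attached to column \(j\).

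The third step is to group terms by total degree \(p = \sum_i k_i\). This produces exactly the two power series in \(-t\) displayed in \eqref{eq:expansions}.

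The only point needing care is justifying the passage from multilinearity over finite sums to infinite sums, and the rearrangement by total degree. For this I will bound the general term. Let \(M = \max_{i,l} D_{il}\). Then each column \(\vec{d}_i^{\odot k}\) has entries at most \(M^k\) in absolute value. By Hadamard's inequality,
\[
\bigl|\det\bigl(\vec{d}_1^{\odot k_1},\dots,\vec{d}_n^{\odot k_n}\bigr)\bigr| \le n^{n/2} M^{\sum_i k_i}.
\]
Hence the full multi-indexed series is dominated by
\[
n^{n/2}\prod_i \sum_{k_i} \frac{(Mt)^{k_i}}{k_i!} = n^{n/2} e^{nMt} < \infty.
\]
Absolute convergence therefore permits both the termwise expansion and the regrouping. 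The same bound handles the numerator.

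Finally, the hypothesis \(0 < t \ll 1\) is needed only so that the denominator is nonzero. Then the quotient is well defined and equals \(w_j(t)\), the unique weighting entry, which is what the lemma asserts.
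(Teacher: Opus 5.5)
Your proposal is correct and is essentially the paper's proof. The paper writes \(w_j(t) = (\adj(Z(t))\onevector)_j/\det(Z(t))\) and uses Laplace expansion along column \(j\) to identify the numerator with \(\det Z_j(t)\), which is exactly your Cramer's rule step; it then expands the entrywise exponentials and applies multilinearity as you do, and your Hadamard-inequality bound supplies the absolute-convergence justification for the infinite expansion and regrouping by total degree that the paper leaves implicit.
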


\begin{proof}
For sufficiently small \(t > 0\) we have \(\det(Z(t)) \neq 0\) and so
\[
{w}_j(t) = (Z(t)^{-1} \onevector)_j = \frac{(\adj(Z(t)) \onevector)_j}{\det(Z(t))}.
\]
The expression in \eqref{eq:expansions} is obtained by expanding the numerator and denominator as power series. Explicitly, to obtain the expansion in the denominator, expand the entries of \(Z(t) = \begin{pmatrix} \exp_{\odot}(-t\vec{d}_1), \cdots, \exp_{\odot}(-t\vec{d}_n)\end{pmatrix}\) as power series in \(t\), then use the multilinearity of the determinant. To obtain the expansion in the numerator, first observe that
\begin{equation}
\label{eq:adj1_det}
(\mathrm{adj}(Z(t)) \onevector)_j = \det
\begin{pmatrix}
e^{-tD_{11}} & \cdots & e^{-tD_{1,j-1}} & 1 & e^{-tD_{1,j+1}} & \cdots & e^{-tD_{1n}} \\
\vdots& & &\vdots& & &\vdots\\
e^{-tD_{n1}} & \cdots & e^{-tD_{n,j-1}}  & 1 & e^{-tD_{n,j+1}}  & \cdots& e^{-tD_{nn}} 
\end{pmatrix},
\end{equation}
as can be seen by Laplace expansion of the determinant along column \(j\).
Expanding the exponentials as power series in \(t\), and using the multilinearity of the determinant, yields the numerator of \eqref{eq:expansions}.
\end{proof}

The proof of the next theorem is also close to the arguments given by Roff and Yoshinaga in \cite[Theorem~2.3]{RoffYoshinaga}. We adapt them to analyse the small-scale limit of the weighting rather than the magnitude.

\begin{theorem}
\label{thm:microwtg_D_invertible}
    Let \(X\) be a finite metric space whose distance matrix \(D\) is invertible. Then \(X\) admits a microscopic weighting if and only if \(\con{D} \neq \infty\). If so, then the microscopic weighting is the unique gauging for \(D\).
\end{theorem}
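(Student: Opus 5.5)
The ``only if'' direction is immediate from \Cref{cor:magD_zero_microweighting_does_not_exist}. So the work is the converse: assume \(D\) is invertible and \(\con{D} \neq \infty\), and show that \(\vec{w}(t) \to \con{D} D^{-1}\onevector\). This vector is the unique gauging, as observed before \Cref{lem:expansion}, since \(\con{D}\neq 0\) by \Cref{lem:val0_det0}. The plan is to read off the lowest-order terms in the numerator and denominator of \eqref{eq:expansions} in \Cref{lem:expansion}.

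\emph{Denominator.} In the coefficient of \((-t)^p\), any determinant in which two or more columns have \(k_i = 0\) has two equal columns \(\onevector\), so it vanishes. For \(p < n-1\) at least two \(k_i\) must be zero, so these coefficients all vanish. At \(p = n-1\), the surviving terms either have every \(k_i = 1\) except one \(k_j = 0\), or have one \(k_j = 0\), one \(k_l = 2\), one \(k_m=0\)... and the latter again have two zero columns. So the only terms at \(p=n-1\) are \(\det(\vec{d}_1,\dots,\onevector,\dots,\vec{d}_n)\), with \(\onevector\) in position \(j\). Summing over \(j\) gives \(\onevector^\transpose \adj(D)\onevector = \det(D)\,\onevector^\transpose D^{-1}\onevector = \det(D)\matmag{D}\). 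By \Cref{thm:mag_val_reciprocal}, \(\matmag{D} = 1/\con{D}\), which is finite and nonzero because \(0 < |\con{D}| < \infty\). Hence the denominator is \((-t)^{n-1}\det(D)/\con{D} + O(t^n)\), with nonzero leading coefficient.

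\emph{Numerator.} Column \(j\) is already \(\onevector\), so every other \(k_i\) must be at least \(1\), giving total degree \(p \geq n-1\). At \(p=n-1\) the only term is \(\det(\vec{d}_1,\dots,\onevector,\dots,\vec{d}_n) = (\adj(D)\onevector)_j = \det(D)(D^{-1}\onevector)_j\), by Cramer's rule. So the numerator is \((-t)^{n-1}\det(D)(D^{-1}\onevector)_j + O(t^n)\).

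Dividing, \(w_j(t) \to (D^{-1}\onevector)_j \con{D}\) as \(t\to 0\), which is the claimed gauging.

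The main obstacle is the bookkeeping at order \(p=n-1\) in the denominator. One has to check carefully that every multi-index other than the ``one zero, rest ones'' pattern forces two columns equal to \(\onevector\). This holds because \(n\) nonnegative integers summing to \(n-1\) with at most one zero must be exactly \(n-1\) ones and a single zero. One also needs the power series to converge near \(0\), so that taking the limit of the ratio is legitimate; this follows from analyticity of the entries of \(Z(t)\). Finiteness and nonvanishing of \(\con{D}\) enter exactly in guaranteeing that the leading denominator coefficient \(\det(D)\,\matmag{D}\) is nonzero.
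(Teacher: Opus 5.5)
Your proposal is correct and follows the paper's proof essentially step for step. You get the ``only if'' direction from \Cref{cor:magD_zero_microweighting_does_not_exist}, and then read off the lowest-order coefficients of \((-t)^{n-1}\) in \Cref{lem:expansion}, namely \((\adj(D)\onevector)_j\) in the numerator and \(\det(D)\matmag{D}\) in the denominator, with the latter nonzero precisely because \(D\) is invertible and \(\con{D}\neq\infty\). Your first description of the \(p=n-1\) bookkeeping in the denominator is garbled, but the counting argument in your final paragraph (\(n\) nonnegative integers summing to \(n-1\) with at most one zero must be one zero and \(n-1\) ones) settles it cleanly.
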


\begin{proof}
    If \(\con{D} = \infty\), then \Cref{cor:magD_zero_microweighting_does_not_exist} tells us that \(X\) does not admit a microscopic weighting. We need to prove that if \(\con{D} \neq \infty\) and \(\det(D) \neq 0\), then it does.

    Consider the expression for \(\vec{w}(t)\) in \Cref{lem:expansion}. When \(p = k_1 + \cdots + k_n\) is less than \(n-1\), at least two of the integers \(k_1,\ldots, k_n\) must be zero, so in the matrix
    \(\begin{pmatrix}
    \vec{d}_1^{\odot k_1},
    \vec{d}_2^{\odot k_2},
    \cdots,
    \vec{d}_n^{\odot k_n}
    \end{pmatrix}\)
    there are two columns equal to \(\onevector\). It follows that in the denominator of \eqref{eq:expansions}, all the determinants contributing to the coefficient of \((-t)^p\) are zero when \(p < n-1\). By similar reasoning, the same holds in the numerator.

    The coefficient of \((-t)^{n-1}\) in the numerator of \eqref{eq:expansions} is
    \begin{equation}
    \label{eq:adj_one_j}
        \det
        \begin{pmatrix}
        \vec{d}_1^{\odot 1},
        \vec{d}_2^{\odot 1},
        \cdots,
        \vec{d}_j^{\odot 0},
        \cdots,
        \vec{d}_n^{\odot 1}
        \end{pmatrix} =
        (\adj(D) \onevector)_j, 
    \end{equation}
    where the equality is obtained by Laplace expansion of the determinant down column \(j\). In the denominator, the coefficient of \((-t)^{n-1}\) is 
    \begin{equation}
    \label{eq:Fn_detmag}
        \sum_{j=1}^n (\adj(D) \onevector)_j = \onevector^\transpose \adj(D) \onevector = \det(D) \onevector^\transpose D^{-1} \onevector = \det(D)\matmag{D}.
    \end{equation}
    Thus, we have
    \begin{align*}
        \vec{w}(t) 
        &= 
        \frac{1}{\onevector^\transpose \adj(D)\onevector \cdot (-t)^{n-1} + O(t^n)}
        \left(\adj(D)
        \onevector \cdot (-t)^{n-1}
        +O(t^n)\right)
        \\
        &= \frac{1}{\matmag{D}\det(D) + O(t)}
        \left(\adj(D)
        \onevector
        +O(t)\right).
    \end{align*}
    Since \(\det(D)\) and \(\matmag{D} = 1/\con{D}\) are both non-zero by assumption, it follows that \(\lim_{t \to 0} \vec{w}(t)\) exists and is given by
    \[\lim_{t \to 0} \vec{w}(t) = \frac{1}{\matmag{D} \det(D)} \adj(D) \onevector = \con{D} D^{-1} \onevector,\]
    which is the unique gauging for \(D\), as claimed.
\end{proof}

The statement of \cite[Theorem 2.3]{RoffYoshinaga} is that the one-point property holds for \emph{almost every} finite metric space: the space of all \(n\)-point metric spaces contains a dense open subset on which the property holds. To prove this, the authors exhibit, for each \(n > 0\), a polynomial \(F_n\) of \(n^2\) variables, such that if the distance matrix \(D\) of a space \(X\) satisfies \(F_n(D) \neq 0\), then \(X\) has the one-point property. Concretely, \(F_n(D) = \sum_{j=1}^n (\adj(D)\onevector)_j\), and the proof of \Cref{thm:microwtg_D_invertible} shows that
    \[
    F_n(D) = \frac{\det(D)}{\con{D}}.
    \]
Thus, the conditions that \(D\) is invertible and \(\con{D} \neq \infty\) precisely guarantee that \(F_n(D) \neq 0\). This tells us that, in the sense of Roff and Yoshinaga \cite{RoffYoshinaga}, almost every finite metric space admits a microscopic weighting.


\section{The microscopic weighting on a space of strictly negative type}
\label{sec:negative_type}

From this point onward, we focus our attention on metric spaces of \emph{negative type} and \emph{strictly negative type}. In \Cref{sec:negative_type_spaces} we recall the definitions and a standard criterion for a space to be of negative or strictly negative type, and apply this to our family of examples. In \Cref{sec:strict_negative_type_existence}, we prove that every finite metric space of strictly negative type admits a microscopic weighting.


\subsection{Metric spaces of negative type}
\label{sec:negative_type_spaces}

First, we need to recall what it means for a matrix to be conditionally negative semi-definite or definite.

\begin{definition}
\label{def:cond-neg-semi-def}
A symmetric real matrix \(A\) is \define{conditionally negative semi-definite} if it satisfies \(\vec{u}^\transpose A \vec{u} \leq 0\) for every vector \(\vec{u}\) such that \(\onevector^\transpose \vec{u} = 0\). If, moreover, \(\vec{u}^\transpose A \vec{u} = 0\) only for \(\vec{u} = \vec{0}\), then \(A\) is \define{conditionally negative definite}.
\end{definition}

We will denote the hyperplane \(\left\{ \vec{u} \in \R^{n} \mid \onevector^\transpose \vec{u} = 0\right\} \subset \R^n\) by \(\measmasszero{n}\), and refer to its elements as \define{mass zero} vectors.

\begin{definition}
\label{def:negative_type}
A finite metric space \(X\) is of \define{negative type} if its distance matrix \(D\) is conditionally negative semi-definite. The space \(X\) is of \define{strictly negative type} if \(D\) is conditionally negative definite. In general, a metric space \(Y\) is said to be of {negative type} if every finite subset of \(Y\) is of negative type, and of {strictly negative type} if every finite subset is of strictly negative type.
\end{definition}

There is a standard criterion for when a symmetric real matrix is conditionally negative semi-definite or definite. The hyperplane \(\measmasszero{n} \subset \mathbb{R}^n\) is, of course, non-canonically linearly isomorphic to \(\R^{n-1}\). Indeed, for each \(k = 1, \dots, n\) one obtains an isomorphism \(\phi^k \colon \measmasszero{n} \isomto \R^{n-1}\) by forgetting the \(k^\th\) coordinate; the inverse to \(\phi^k\) is the map \(\theta^k \colon \R^{n-1} \to \measmasszero{n}\) given by 
\[
    \theta^k(z_1, \dots, z_{n-1}) = 
    \bigl(z_1, \dots, z_{k-1}, -\textstyle\sum_{\ell =1}^{n-1} z_{\ell}, z_k, \dots, z_{n-1}\bigr).
\] 
Each of the maps \(\theta^k\) can be thought of as a parametrization of \(\measmasszero{n}\). Now, given any symmetric \(n \times n\) real matrix \(A\), the quadratic form that \(A\) determines on \(\measmasszero{n}\) gives rise, via each parametrization \(\theta^k\), to a quadratic form on \(\R^{n-1}\).

\begin{definition}
Let \(A\) be a symmetric \(n \times n\) real matrix. The \define{\(k^\th\) excess matrix} associated to \(A\), denoted \(E^k\), is the \((n-1) \times (n-1)\) matrix of the quadratic form on \(\R^{n-1}\) induced from \(A\) via the parametrization \(\theta^k \colon \R^{n-1} \isomto \measmasszero{n}\).  That is,
    \[
        \vec{z}^\transpose E^k \vec{z} = \theta^k(\vec{z})^\transpose A \theta^k(\vec{z}) 
        \quad
        \text{for all }
        \vec{z} \in \R^{n-1}.
    \]
\end{definition}

A direct calculation using $\theta^1$ gives a formula for the first excess matrix.

\begin{lemma}
\label{lem:excess-formula}
    The first excess matrix of \(A\) has entries
    \(E^1_{\ell, m} = A_{\ell + 1, m + 1} - A_{1, m + 1} - A_{\ell + 1, 1}\). \qed
\end{lemma}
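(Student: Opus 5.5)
Expanding \(\theta^1(\vec{z})^\transpose A\, \theta^1(\vec{z})\) as a quadratic form in \(\vec{z}\) and reading off its coefficients gives the formula directly. Write \(\vec{u} = \theta^1(\vec{z})\), so that \(u_1 = -\sum_{\ell=1}^{n-1} z_\ell\) and \(u_{\ell+1} = z_\ell\) for \(\ell = 1,\dots,n-1\). Then
\[
\vec{u}^\transpose A \vec{u} = A_{11} u_1^2 + 2u_1\sum_{m=1}^{n-1} A_{1,m+1} z_m + \sum_{\ell,m=1}^{n-1} A_{\ell+1,m+1} z_\ell z_m .
\]
Substituting \(u_1 = -\sum_\ell z_\ell\), the first term becomes \(\sum_{\ell,m} A_{11} z_\ell z_m\). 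The middle term becomes \(-2\sum_{\ell,m} A_{1,m+1} z_\ell z_m\), and by the symmetry of \(A\) we may write it as \(-\sum_{\ell,m}(A_{1,m+1} + A_{\ell+1,1}) z_\ell z_m\).

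Collecting terms, \(\vec{z}^\transpose E \vec{z} = \sum_{\ell,m} \bigl(A_{\ell+1,m+1} - A_{1,m+1} - A_{\ell+1,1} + A_{11}\bigr) z_\ell z_m\), where the bracketed coefficient matrix is symmetric. The matrix of a quadratic form is conventionally taken to be symmetric, and a symmetric matrix is determined by its quadratic form (by polarization). So this symmetric coefficient matrix is \(E^1\).

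The one point needing attention is the \(A_{11}\) term. The statement as given omits it, which is correct precisely when \(A_{11}=0\). This is the case of interest, since the paper applies excess matrices to distance matrices, whose diagonal entries vanish. I would therefore either note that the formula is meant for matrices with \(A_{11}=0\) (such as \(D\)), or state the general formula with \(+A_{11}\) and observe that it reduces to the displayed one for distance matrices. Apart from this, the argument is routine bookkeeping.
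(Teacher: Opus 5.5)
Your proof is correct and is the same direct expansion of \(\theta^1(\vec{z})^\transpose A\,\theta^1(\vec{z})\) that the paper has in mind; the paper gives no details beyond ``a direct calculation using \(\theta^1\).'' Your caveat is also right: for a general symmetric \(A\) the entry is \(A_{\ell+1,m+1} - A_{1,m+1} - A_{\ell+1,1} + A_{11}\) (for instance \(A = I_2\) gives \(E^1 = (2)\), not \((1)\)), so the displayed formula holds only when \(A_{11} = 0\), which is the case for the distance matrices to which the paper applies it.
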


The next statement follows immediately from the construction of the excess matrices and the definition of conditional negative (semi-)definiteness.

\begin{lemma}[{\cite[Lemma~3.5]{HLMT1998}}]
    \label{lem:negative-type-criterion}
    A symmetric \(n \times n\) real matrix \(A\) is conditionally negative semi-definite if and only if, for any (and hence all) \(k = 1, \dots, n-1\), the excess matrix \(E^k\) is positive semi-definite. The matrix \(A\) is conditionally negative definite if and only if, for any (and hence all) \(k\), the excess matrix \(E^k\) is positive definite. Moreover, if \(A\) is conditionally negative semi-definite, then it is conditionally negative definite if and only if for any (and hence all) \(k\) the excess matrix \(E^k\) is invertible.
\end{lemma}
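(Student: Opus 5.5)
The statement is \Cref{lem:negative-type-criterion}. I will reduce every claim to the observation that \(\theta^k \colon \R^{n-1} \to \measmasszero{n}\) is a linear isomorphism, so that \(E^k\) is literally the quadratic form \(A|_{\measmasszero{n}}\) written in a coordinate system. Precisely, writing \(T_k\) for the \(n \times (n-1)\) matrix of \(\theta^k\), the defining identity gives \(E^k = T_k^\transpose A T_k\). This holds because both sides are symmetric and define the same quadratic form on \(\R^{n-1}\), and a symmetric matrix is determined by its quadratic form via polarization.

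For the first two assertions, I will note that every \(\vec{u} \in \measmasszero{n}\) has the form \(\vec{u} = \theta^k(\vec{z})\) for a unique \(\vec{z}\), namely \(\vec{z} = \phi^k(\vec{u})\). Then \(\vec{u}^\transpose A \vec{u} = \vec{z}^\transpose E^k \vec{z}\), and \(\vec{u} = \vec{0}\) if and only if \(\vec{z} = \vec{0}\). So \(\vec{u}^\transpose A\vec{u} \le 0\) for all mass zero \(\vec{u}\) exactly when \(\vec{z}^\transpose E^k\vec{z} \le 0\) for all \(\vec{z}\). Likewise the form vanishes only at \(\vec{0}\) on one side exactly when it does on the other. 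This makes the criterion independent of \(k\), which justifies ``any (and hence all)''.

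Here there is a sign issue I must settle first. By \Cref{def:cond-neg-semi-def}, conditional negative semi-definiteness of \(A\) corresponds to \(E^k\) being \emph{negative} semi-definite, not positive. So either the excess matrix is intended with the opposite sign, or the lemma as stated needs the sign adjusted. The formula in \Cref{lem:excess-formula} supports the first reading. For \(A = D\) a distance matrix, \(E^1_{\ell\ell} = -2D_{1,\ell+1} \le 0\), whereas the classical excess matrix \(D_{1,m+1} + D_{\ell+1,1} - D_{\ell+1,m+1}\) is the negative of this and has positive diagonal. I will therefore read \(E^k\) with the sign convention that makes the lemma true, namely \(\vec{z}^\transpose E^k \vec{z} = -\theta^k(\vec{z})^\transpose A\theta^k(\vec{z})\). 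I flag this as the main point to get right; the rest is routine.

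For the last assertion, I will assume \(A\) is conditionally negative semi-definite, so that \(E^k\) is positive semi-definite. For a positive semi-definite matrix, \(\vec{z}^\transpose E^k \vec{z} = 0\) implies \(E^k \vec{z} = \vec{0}\), which follows from the square root or spectral decomposition. Hence \(E^k\) is positive definite if and only if it has trivial kernel, that is, if and only if it is invertible. Combined with the second assertion, this gives that \(A\) is conditionally negative definite if and only if \(E^k\) is invertible, again for any \(k\).
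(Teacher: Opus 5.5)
Your argument is correct and is the paper's own approach: the paper only says the lemma follows immediately from the construction of \(E^k\), i.e.\ from \(\theta^k\) being a linear isomorphism onto \(\measmasszero{n}\), so that \(E^k\) is the quadratic form of \(A\) on mass-zero vectors written in coordinates, and your remark that a positive semi-definite matrix is definite if and only if it is invertible settles the last clause. Your sign flag is a genuine inconsistency in the paper, and your convention \(\vec{z}^\transpose E^k \vec{z} = -\theta^k(\vec{z})^\transpose A\,\theta^k(\vec{z})\) is the intended one; note, though, that \Cref{lem:excess-formula} agrees with the literal definition, so what actually confirms your convention is the matrix displayed in the proof of \Cref{prop:PQ-negative-type} (diagonal entries \(2a\) and \(2\)), which is the negative of what the literal definition gives.
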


Using the criterion of \Cref{lem:negative-type-criterion}, we can identify the spaces of negative or strictly negative type among the family of spaces introduced in \Cref{eg:five-points}. In \Cref{fig:graph-family}, the shaded area contains the spaces of strictly negative type, while spaces lying on the red dotted line are of negative but not strictly negative type.

\begin{prop}
\label{prop:PQ-negative-type}
The metric space \(P(a, b)\) is of negative type if and only if \(4a + 3b \le 12\). It is of strictly negative type if and only if \(4a + 3b < 12\).
\end{prop}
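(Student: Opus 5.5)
I will work directly with the quadratic form \(\vec{u} \mapsto \vec{u}^\transpose D^{(a,b)} \vec{u}\) on the mass-zero hyperplane \(\measmasszero{5}\), rather than grinding through the eigenvalues of a \(4 \times 4\) excess matrix. (\Cref{lem:negative-type-criterion} is just a reformulation of the same condition.) The key observation is that \(D^{(a,b)}\) is invariant under the group \(S_3 \times S_2\), which permutes the three points of the \(a\)-cluster among themselves and swaps the two points of the \(b\)-pair. I will therefore split \(\measmasszero{5}\) into three mutually orthogonal, \(D\)-invariant subspaces:
\begin{itemize}
\item \(V_1\), the vectors supported on points \(1,2,3\) with \(u_1+u_2+u_3=0\);
\item \(V_2\), the vectors supported on points \(4,5\) with \(u_4+u_5=0\);
\item \(V_3\), the one-dimensional space spanned by \((2,2,2,-3,-3)^\transpose\).
\end{itemize}
Their dimensions are \(2+1+1=4\), so together they exhaust \(\measmasszero{5}\).

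First I will check that \(D\) maps each \(V_i\) into \(V_i \oplus \mathrm{span}(\onevector)\), so that the quadratic form has no cross terms between the \(V_i\). For \(V_1\) and \(V_2\) this is a direct check. If \(\vec{u} \in V_1\), then \((D\vec{u})_i\) for \(i=4,5\) equals \(u_1+u_2+u_3=0\). For \(i \le 3\) we get \((D\vec{u})_i = a(\sum_{j\le 3} u_j - u_i) = -a u_i\). The case of \(V_2\) is analogous. Alternatively, one can simply expand \((\vec{u}+\vec{v}+\vec{x})^\transpose D (\vec{u}+\vec{v}+\vec{x})\) and see that every mixed term vanishes, because each block of \(D\) that connects the groups is constant. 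Then I compute each restricted form:
\begin{itemize}
\item on \(V_1\), \(\vec{u}^\transpose D \vec{u} = a\bigl((\textstyle\sum_{i \le 3} u_i)^2 - \sum_{i\le 3} u_i^2\bigr) = -a\lvert\vec{u}\rvert^2\);
\item on \(V_2\), \(\vec{u}^\transpose D\vec{u} = 2b\,u_4u_5 = -2b u_4^2\);
\item on \(V_3\), taking \(\vec{u}=(x,x,x,y,y)\) with \(y=-\tfrac32 x\), we get \(\vec{u}^\transpose D\vec{u} = 6a x^2 + 2b y^2 + 12xy = \tfrac32(4a+3b-12)\,x^2\).
\end{itemize}

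Since \(a,b>0\), the form is strictly negative on \(V_1 \oplus V_2\) minus the origin. Hence it is \(\le 0\) on all of \(\measmasszero{5}\) if and only if \(4a+3b \le 12\). It vanishes only at \(\vec{0}\) if and only if \(4a+3b<12\), in which case \(D^{(a,b)}\) is conditionally negative definite. This gives both claims via \Cref{def:negative_type}. As a sanity check, on the line \(4a+3b=12\) the nonzero kernel vector \((2,2,2,-3,-3)^\transpose\) is consistent with the gauging formula of \Cref{eg:five-points} degenerating there. It is also consistent with the earlier computation that \(\con*{D^{(a,b)}}=\infty\) on that line (except at \(P_{\mathrm{E}}\)).

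The main obstacle is the bookkeeping in the first step: verifying that the three subspaces really are orthogonal with respect to the form \(D\), so that no cross terms survive. I will do this by the explicit computation of \(D\vec{u}\) for \(\vec{u} \in V_1, V_2\) described above. If I preferred to stay closer to \Cref{lem:negative-type-criterion}, the fallback is to write out the first excess matrix using \Cref{lem:excess-formula}. Up to the paper's sign convention, this is \(\pm\) the matrix with rows \((2a,a,a,a)\), \((a,2a,a,a)\), \((a,a,2,2-b)\), \((a,a,2-b,2)\). One would then check definiteness by the same symmetry reduction, or via leading principal minors.
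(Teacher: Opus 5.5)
Your proof is correct, but it takes a different route from the paper's. The paper writes down the first excess matrix \(E^1(a,b)\) using \Cref{lem:excess-formula} and applies Sylvester's criterion. The leading principal minors \(2a\), \(3a^2\), \(2(3-a)a^2\), \((12-4a-3b)a^2b\) decide positive definiteness, and the remaining distinct principal minors \(2\), \((4-a)a\), \((4-b)b\), \(2(4-a-b)ab\) are added for semi-definiteness. \Cref{lem:negative-type-criterion} then gives the result.

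You instead use the \(S_3\times S_2\) symmetry to split \(\measmasszero{5}\) into three pieces that are orthogonal for the form. On these the form equals \(-a\lvert\vec u\rvert^2\), \(-2bu_4^2\) and \(\tfrac{3}{2}(4a+3b-12)x^2\). The supporting computations check out: \(D\vec u=-a\vec u\) on \(V_1\), \(D\vec u=-b\vec u\) on \(V_2\), and \(D(2,2,2,-3,-3)^\transpose=(4a-6,4a-6,4a-6,6-3b,6-3b)^\transpose\in V_3\oplus\mathrm{span}(\onevector)\).

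What your version buys:
\begin{itemize}
\item It works directly from \Cref{def:negative_type}.
\item It replaces the list of principal minors with an explicit diagonalization.
\item It exhibits the unique degenerate direction on the critical line.
\item It avoids the sign issue you flagged. With the paper's definition of \(E^k\), \Cref{lem:excess-formula} gives \(-2a\) on the diagonal, so the displayed matrix is really \(-E^1\). That matches the sign convention implicit in the statement of \Cref{lem:negative-type-criterion}, so the paper's argument is unaffected.
\end{itemize}
What the paper's version buys is that it is purely mechanical and needs no symmetry to be spotted. That is why it can be reused essentially verbatim for \(Q(a,b)\) in \Cref{eg:seven-points}.

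One small correction to your sanity check. On the line \(4a+3b=12\), the vector \(\vec x=(2,2,2,-3,-3)^\transpose\) is not in \(\ker D\). Rather, \(D\vec x=(4a-6)\onevector\), so \(\vec x\) lies in the radical of the form restricted to \(\measmasszero{5}\). For \(a\neq 3/2\) this makes \(\vec x/(4a-6)\) a mass-zero weighting for \(D\), whence \(\matmag{D}=0\) and \(\con{D}=\infty\). Only at \(P_{\mathrm{E}}\) is \(\vec x\) a genuine kernel vector.
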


\begin{proof}
    By \Cref{lem:excess-formula}, the first excess matrix of the distance matrix of \(P(a,b)\) is
    \[E^1(a,b) \coloneq 
    \begin{pmatrix}
        2a & a & a & a \\
        a & 2a & a & a \\
        a & a & 2 & -b + 2 \\
        a & a & -b + 2 & 2 \\
    \end{pmatrix}.
    \]
    By Sylvester's criterion, this matrix is positive definite if and only if the leading principal minors are all positive, and positive semi-definite if and only if all the principal minors (not only the leading ones) are non-negative. Applying the first criterion to the leading principal minors \(2a\), \(3a^2\), \(2(3- a)a^2\) and \((12 - 4a - 3b)a^2b\) establishes that \(E^1(a,b)\) is positive definite if and only if \(4a + 3b < 12\). Applying the second criterion to these and to the additional principal minors \(2\), \((4 - a)a\), \((4 - b)b\) and \(2(4 - a - b)ab\) establishes that \(E^1(a,b)\) is positive semi-definite if and only if \(4a + 3b \leq 12\). The statement now follows from \Cref{lem:negative-type-criterion}.
\end{proof}

Historically, metric spaces of negative type derive their significance from work of Schoenberg, as we will outline in \Cref{subsec:schoenberg}. In more recent literature their role tends to be as a convenient class of metric spaces which share many of the good properties of Euclidean space. For example, the \(\ell_p\)-metric on \(\R^n\) is of negative type for each \(p \in [1,2]\). Real and complex hyperbolic space are of negative type, as is every sphere with the geodesic metric \cite[Theorem~3.6]{MeckesPositive2013}. Indeed, Euclidean and hyperbolic spaces are of strictly negative type, as is every finite tree \cite{HLMT1998, HKM2002}.

In the study of magnitude, spaces of negative type are particularly important because, as Meckes proves in~\cite[Theorem~3.3]{MeckesPositive2013}, a finite metric space \(X\) is of negative type if and only if the similarity matrix \(Z(t)\) is positive definite for every \(t > 0\). This guarantees, among other things, that the magnitude function \(\mu_X\) is finite-valued and smooth.


\subsection{Every finite space of strictly negative type admits a microscopic weighting}
\label{sec:strict_negative_type_existence}

To establish the existence of microscopic weightings for spaces of strictly negative type, we will show that these spaces are characterized, among spaces of negative type, by the property that the distance matrix is invertible and has finite concentration. This is a consequence of the following fact, proved by Hjorth, Lison\u{e}k, Markvorsen and Thomassen.

\begin{prop}[{\cite[Proof of Lemma~3.6]{HLMT1998}}]
\label{prop:det_nonzero_strictly_NT}
    Let \(X\) be a finite metric space of negative type with distance matrix \(D\). Then \(X\) is of strictly negative type if and only if the following matrix is invertible:
    \begin{equation*}
    \label{eq:N_X}
    N \coloneq \begin{pmatrix}
	0 & \onevector^\transpose \\
	\onevector & D
    \end{pmatrix}.
    \end{equation*}
\end{prop}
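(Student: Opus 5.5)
The plan is to prove both directions by comparing the kernel of \(N\) with the set of mass-zero vectors on which the quadratic form of \(D\) degenerates. The starting observation is that, for \(X\) of negative type, \(D\) restricted to \(\measmasszero{n}\) is negative semi-definite. Hence a vector \(\vec{u} \in \measmasszero{n}\) satisfies \(\vec{u}^\transpose D \vec{u} = 0\) if and only if \(\vec{u}\) lies in the radical of that restricted form. Concretely, this means \(D\vec{u}\) is orthogonal to \(\measmasszero{n}\), i.e.\ \(D\vec{u} = c\onevector\) for some \(c \in \R\). The forward implication of this last equivalence is the standard Cauchy--Schwarz fact for semi-definite forms. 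To see it, note that for any \(\vec{u}' \in \measmasszero{n}\) and real \(s\), the quantity \((\vec{u} + s\vec{u}')^\transpose D (\vec{u}+s\vec{u}') \le 0\). Its vanishing at \(s=0\) forces the linear term \(2s\,\vec{u}'^\transpose D\vec{u}\) to be zero.

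Next I identify \(\ker N\). A vector \((c, \vec{u})^\transpose \in \R \times \R^n\) lies in \(\ker N\) exactly when \(\onevector^\transpose \vec{u} = 0\) and \(c\onevector + D\vec{u} = \zerovector\). So \(\ker N\) is in bijection with pairs \((c,\vec{u})\) such that \(\vec{u} \in \measmasszero{n}\) and \(D\vec{u} = -c\onevector\).

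For the first direction, suppose \(X\) is not of strictly negative type. Then there is a nonzero \(\vec{u} \in \measmasszero{n}\) with \(\vec{u}^\transpose D\vec{u} = 0\). By the first paragraph, \(D\vec{u} = c\onevector\) for some \(c\), so \((-c,\vec{u})^\transpose\) is a nonzero element of \(\ker N\), and \(N\) is singular.

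For the converse, suppose \(N\) is singular, and take a nonzero \((c,\vec{u})^\transpose \in \ker N\). Then \(\vec{u} \in \measmasszero{n}\) and \(\vec{u}^\transpose D\vec{u} = -c\,\onevector^\transpose\vec{u} = 0\). If \(\vec{u} \neq \zerovector\), strict negative type fails. If \(\vec{u} = \zerovector\), then \(c\onevector = \zerovector\) gives \(c = 0\), contradicting that the kernel vector is nonzero. So \(\vec{u}\) is a nonzero witness against strict negative type.

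The only real subtlety is the radical argument in the first paragraph: it is precisely there that the negative-type hypothesis is used, and without it the first direction fails. Everything else is direct linear algebra from the block form of \(N\).
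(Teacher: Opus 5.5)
Your proof is correct and complete. The radical step is precisely where negative type is used, your description of $\ker N$ is right, and the converse correctly rules out $\vec{u} = \zerovector$.

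The paper gives no argument of its own here; it imports the statement from the proof of Lemma~3.6 in \cite{HLMT1998}. The relevant comparison is therefore with the determinantal route that the surrounding material sets up. Elementary row and column operations give $\det(N) = -\det(E^1)$, with $E^1$ the first excess matrix of \Cref{lem:excess-formula}. Since $E^1$ is semi-definite when $X$ is of negative type, \Cref{lem:negative-type-criterion} says it is invertible exactly when $X$ is of strictly negative type.

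Your kernel argument needs neither the determinant identity nor the excess matrices, and it makes the logic more transparent. The implication ``$N$ singular $\Rightarrow$ not of strictly negative type'' holds for any symmetric $D$. Negative type is needed only for ``not of strictly negative type $\Rightarrow$ $N$ singular''. The space $P_{\mathrm{B}}$ shows this hypothesis cannot be dropped: it is not of negative type, yet $\det(N) = 16$. Your argument also exhibits $\ker N$ concretely as the pairs $(c, \vec{u})$ with $\vec{u} \in \measmasszero{n}$ and $D\vec{u} = -c\onevector$. These are the same mass-zero vectors that appear in the proofs of \Cref{lemma:neg-type-mag-finite} and \Cref{prop:maximal-distance-energy}, so your proof fits the paper's gauging viewpoint well.

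What the determinantal route offers in exchange is an explicit formula for $\det(N)$. That is the form the paper actually uses in \Cref{lem:det-N-adj-D} to recast the criterion as $\det(D) \neq 0$ and $\con{D} \neq \infty$.
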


The next two lemmas will allow us to rephrase \Cref{prop:det_nonzero_strictly_NT} in terms of the concentration of \(D\).

\begin{lemma}
    \label{lemma:neg-type-mag-finite}
    Let \(X\) be a finite metric space of negative type with distance matrix \(D\).
    \begin{enumerate}
    \item If \(X\) has just one point, then \(\con{D} = 0\). \label{lemma:neg-type-mag-finite1}
    \item If \(X\) has at least two points, then \(\con{D} \in (0,\infty]\). \label{lemma:neg-type-mag-finite2}
    \item If \(X\) has at least two points and is of strictly negative type, then \(\con{D} \in (0, \infty)\). \label{lemma:neg-type-mag-finite3}
    \end{enumerate}
\end{lemma}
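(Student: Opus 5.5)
Part \eqref{lemma:neg-type-mag-finite1} is immediate. If \(X=\{x\}\) then \(D=(0)\), so \(\vec{v}=(1)\) satisfies \(\onevector^\transpose\vec{v}=1\) and \(D\vec{v}=0\cdot\onevector\). Hence \(\con{D}=0\).

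For part \eqref{lemma:neg-type-mag-finite2}, the plan is to show that when \(n\ge 2\), any gauging \(\vec{v}\) of \(D\) has strictly positive concentration \(c\). The concentration can be read off as the quadratic form \(c=\vec{v}^\transpose D\vec{v}\), since \(\vec{v}^\transpose D\vec{v}=c\,\vec{v}^\transpose\onevector=c\). To bring in the negative type hypothesis, compare \(\vec{v}\) with a point mass \(\vec{e}_i\). The vector \(\vec{u}=\vec{v}-\vec{e}_i\) has mass zero, so \(\vec{u}^\transpose D\vec{u}\le 0\). Expanding, and using \(\vec{e}_i^\transpose D\vec{e}_i=D_{ii}=0\) and \(\vec{e}_i^\transpose D\vec{v}=c\), gives
\[
\vec{u}^\transpose D\vec{u} = c-2c+0 = -c .
\]
Therefore \(c\ge 0\).

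It remains to rule out \(c=0\) when \(n\ge2\); this is the main point. Suppose \(D\vec{v}=\zerovector\) with \(\onevector^\transpose\vec{v}=1\). For each \(i\), the computation above gives \((\vec{v}-\vec{e}_i)^\transpose D(\vec{v}-\vec{e}_i)=0\). A conditionally negative semi-definite form restricted to the mass-zero hyperplane \(\measmasszero{n}\) is negative semi-definite there. So, by the Cauchy--Schwarz argument for semi-definite forms, an isotropic vector \(\vec{u}\) in \(\measmasszero{n}\) satisfies \(\vec{u}^\transpose D\vec{y}=0\) for all \(\vec{y}\in\measmasszero{n}\). Apply this with \(\vec{u}=\vec{v}-\vec{e}_i\) and \(\vec{y}=\vec{v}-\vec{e}_j\) for \(j\ne i\). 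Using \(D\vec{v}=\zerovector\), this yields
\[
0=(\vec{v}-\vec{e}_i)^\transpose D(\vec{v}-\vec{e}_j)=\vec{e}_i^\transpose D\vec{e}_j=D_{ij}=d(x_i,x_j)>0,
\]
which is a contradiction. So \(c>0\) whenever a gauging exists, and otherwise \(\con{D}=\infty\). This establishes \(\con{D}\in(0,\infty]\).

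For part \eqref{lemma:neg-type-mag-finite3}, by part \eqref{lemma:neg-type-mag-finite2} it suffices to show that \(D\) has a gauging. Use \Cref{prop:det_nonzero_strictly_NT}: strict negative type makes \(N\) invertible, so there is a solution \((s,\vec{g})\) of
\[
N\begin{pmatrix}s\\ \vec{g}\end{pmatrix}=\begin{pmatrix}1\\ \zerovector\end{pmatrix}.
\]
Reading off the two block rows gives \(\onevector^\transpose\vec{g}=1\) and \(D\vec{g}=-s\onevector\). Thus \(\vec{g}\) is a gauging, so \(\con{D}=-s\) is finite, and by part \eqref{lemma:neg-type-mag-finite2} it is positive.
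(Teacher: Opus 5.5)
Your proof is correct. Part (1) matches the paper, but parts (2) and (3) take different routes.

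For (2), the paper argues spectrally. Since $D$ is nonzero with zero trace, it has a positive eigenvalue $\lambda$ with eigenvector $\vec{u}$. Negative type forces $\onevector^\transpose\vec{u}\neq 0$, so $\vec{u}$ can be rescaled to mass one, and applying negative type to $\vec{v}-\vec{u}$ gives $\con{D}\geq\lambda|\vec{u}|^2>0$ at once. Both arguments are instances of the inequality $\con{D}\geq\vec{m}^\transpose D\vec{m}$ for every mass-one $\vec{m}$, obtained by expanding $(\vec{v}-\vec{m})^\transpose D(\vec{v}-\vec{m})\leq 0$. This is the same computation as the energy-maximizing property of gaugings in \Cref{prop:maximal-distance-energy}. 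The paper picks an $\vec{m}$ with $\vec{m}^\transpose D\vec{m}>0$ and gets strictness immediately. You pick $\vec{m}=\vec{e}_i$, where the bound is only $0$, so you need the extra Cauchy--Schwarz step to exclude $c=0$. Your version avoids eigenvalues and uses $d(x_i,x_j)>0$ directly rather than only $D\neq 0$. Choosing $\vec{m}=\tfrac12(\vec{e}_i+\vec{e}_j)$ instead would give $\con{D}\geq\tfrac12 d(x_i,x_j)>0$ in one line, keeping your elementarity while gaining the paper's one-step, quantitative bound.

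For (3), the paper argues by contradiction via \Cref{thm:mag_val_reciprocal}. If $\con{D}=\infty$ then $\matmag{D}=0$, so there is a nonzero mass-zero $\vec{w}$ with $D\vec{w}=\onevector$, whence $\vec{w}^\transpose D\vec{w}=0$, contradicting strict negative type. You instead cite \Cref{prop:det_nonzero_strictly_NT} and solve a linear system in $N$, which produces the gauging explicitly. This is not circular, since that proposition is imported from the literature rather than derived from the present lemma. But it outsources essentially the same computation: such a $\vec{w}$ would make $(-1,\vec{w})$ a kernel vector of $N$. The paper's argument, by contrast, is self-contained.
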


\begin{proof}
    The distance matrix of the one-point space is \(\begin{pmatrix} 0 \end{pmatrix}\), and the vector \(\begin{pmatrix} 1 \end{pmatrix}\) is a gauging for this matrix, with concentration 0. This proves the first claim.
    
    For claim \eqref{lemma:neg-type-mag-finite2}, assume that \(X\) has \(n \geq 2\) points and suppose that \(\con{D} \ne \infty\). Then there exists a gauging for \(D\): some \(\vec{v} \in \R^n\) such that  \(D\vec{v} = \con{D}\onevector\) and \(\onevector^\transpose \vec{v} = 1\). We want to show that \(\con{D} > 0\). For this, observe first that \(D\) must have at least one positive eigenvalue. Indeed, as \(D\) is real and symmetric it has \(n\) real eigenvalues, and as it is not the zero matrix, at least one must be non-zero. Their sum is the trace of \(D\), which is zero as all the diagonal entries are zero.  Thus \(D\) has at least one positive and at least one negative eigenvalue. Let \(\lambda > 0\) be an eigenvalue for \(D\), and \(\vec{u}\) be the corresponding eigenvector. Then \(\vec{u}^\transpose D \vec{u} = \lambda \left|\vec{u}\right|^2 > 0\).  As \(X\) is of negative type this means that \(\onevector^\transpose \vec{u} \ne 0\). Thus, rescaling \(\vec{u}\) if necessary, we can assume that \(\onevector^\transpose \vec{u} = 1\). Now
    \(
        \onevector^\transpose (\vec{v} - \vec{u})
        =
        1 - 1
        =
        0,
    \)
    so, as \(X\) is of negative type, we have
    \begin{align*}
        0 
        &\ge 
        (\vec{v} - \vec{u})^\transpose D (\vec{v} - \vec{u})
        =
        \vec{v}^\transpose D\vec{v}
        -
        2 \vec{v}^\transpose D \vec{u}
        +
        \vec{u}^\transpose D \vec{u}
        \\
        &= 
        \con{D} - 2 \con{D} + \vec{u}^\transpose D \vec{u}
        = -\con{D} + \lambda \left|\vec{u}\right|^2.
    \end{align*}
    Thus \(\con{D} \ge \lambda \left|\vec{u}\right|^2 > 0\), as required.
    
    For \eqref{lemma:neg-type-mag-finite3}, we argue by contradiction. Suppose \(\con{D} = \infty\); this is equivalent to \(\matmag{D} = 0\), so there exists \(\vec{w} \neq \vec{0}\) such that \(D\vec{w} = \onevector\) and \(\onevector^\transpose \vec{w} = 0\).  This  \(\vec{w}\) is a mass-zero vector satisfying \(\vec{w}^\transpose D \vec{w} = 0\), so \(X\) cannot be of strictly negative type. 
\end{proof}

In the following statement, \(N\) is the matrix defined in \Cref{prop:det_nonzero_strictly_NT}.

\begin{lemma}
    \label{lem:det-N-adj-D}
    For every finite metric space \(X\) of negative type,
    \(
        \det(N) = - \onevector^\transpose \adj(D) \onevector
    \).
    If \(X\) has at least two points, then
    \(
    \det(N) = -\det(D) / \con{D}
    \).
\end{lemma}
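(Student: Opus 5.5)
The first identity is a general fact about bordered matrices and does not use negative type. The second is then a combination of \eqref{eq:Fn_detmag}, \Cref{thm:mag_val_reciprocal} and \Cref{lemma:neg-type-mag-finite}.

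\emph{Step 1: the bordered determinant.} I would expand \(\det(N)\) along its first row. The \((1,1)\) entry is \(0\), so only the entries \(N_{1,j+1} = 1\) for \(j = 1, \dots, n\) contribute. Each of the resulting \(n \times n\) minors has columns \(\onevector, \vec{d}_1, \dots, \widehat{\vec{d}_j}, \dots, \vec{d}_n\), where the hat means that column is omitted. Expanding each such minor down its first column \(\onevector\) gives a signed sum of cofactors of \(D\). Collecting terms should give
\[
\det(N) = -\sum_{i,j} (-1)^{i+j} \det D_{(i,j)} = -\onevector^\transpose \adj(D)\onevector,
\]
where \(D_{(i,j)}\) is \(D\) with row \(i\) and column \(j\) deleted. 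The only real work here is the sign bookkeeping. To avoid it I would instead use the standard bordered-determinant identity
\[
\det\begin{pmatrix} a & \vec{b}^\transpose \\ \vec{c} & D\end{pmatrix} = a\det(D) - \vec{b}^\transpose \adj(D)\,\vec{c}.
\]
This holds for invertible \(D\) by the Schur complement: \(\det(D)\,(a - \vec{b}^\transpose D^{-1}\vec{c})\). Both sides are polynomial in the entries of \(D\), so it extends to all \(D\) by density of invertible matrices. Setting \(a = 0\) and \(\vec{b} = \vec{c} = \onevector\) gives the first claim.

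\emph{Step 2: the second identity.} Suppose \(X\) has at least two points. The chain of equalities in \eqref{eq:Fn_detmag} gives \(\onevector^\transpose\adj(D)\onevector = \det(D)\matmag{D}\) when \(D\) is invertible. By \Cref{thm:mag_val_reciprocal}, \(\matmag{D} = 1/\con{D}\), and by \Cref{lemma:neg-type-mag-finite}\eqref{lemma:neg-type-mag-finite2}, \(\con{D} \in (0,\infty]\), so this reciprocal is meaningful. If \(\con{D} = \infty\) then \(\matmag{D} = 0\), and the formula should be read as \(\det(N) = 0\). The one-point case is excluded precisely because \(\con{D} = 0\) there, which would make the quotient ill-defined.

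\emph{Step 3: singular \(D\).} This is the main obstacle: we must show \(\onevector^\transpose\adj(D)\onevector = 0\) whenever \(\det(D) = 0\), with the right side \(\det(D)/\con{D}\) interpreted as \(0\). If \(\operatorname{rank} D \le n-2\), then \(\adj(D) = 0\) and there is nothing to prove. If \(\operatorname{rank} D = n-1\), then \(\adj(D) = \alpha\,\vec{k}\vec{k}^\transpose\) for a kernel vector \(\vec{k}\), so
\[
\onevector^\transpose\adj(D)\onevector = \alpha(\onevector^\transpose\vec{k})^2 .
\]
If \(\onevector^\transpose\vec{k} \neq 0\), then \(\vec{k}/(\onevector^\transpose\vec{k})\) is a gauging of concentration \(0\), so \(\con{D} = 0\). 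This contradicts \Cref{lemma:neg-type-mag-finite}\eqref{lemma:neg-type-mag-finite2}. Hence \(\onevector^\transpose\vec{k} = 0\), and therefore \(\det(N) = 0 = \det(D)/\con{D}\) in every case.
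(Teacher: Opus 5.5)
Your proof is correct. It has the same two-stage structure as the paper's: identify \(\det(N)\) with \(-\onevector^\transpose\adj(D)\onevector\), then use \Cref{lemma:neg-type-mag-finite} to rule out \(\con{D}=0\). The execution differs at both stages.

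For the first identity, the paper carries out exactly the iterated Laplace expansion you sketched and set aside, in the other order: down the first column of \(N\), then along the top row of each minor. It does the sign bookkeeping explicitly. Your Schur-complement-plus-density argument avoids the signs and yields the general bordered formula. The cost is a detour through invertible matrices and a polynomial-continuity step, whereas the Laplace expansion works for every \(D\) directly.

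For the second identity, the paper uses one line: non-zero concentration, hence \(\onevector^\transpose\adj(D)\onevector=\det(D)\matmag{D}\). You are right that \eqref{eq:Fn_detmag} was derived only for invertible \(D\), and your rank analysis of \(\adj(D)\) in the singular case is correct. The case split is unnecessary, though. Since \(\con{D}\neq 0\), \Cref{thm:mag_val_reciprocal} gives a weighting \(\vec{w}\) with \(D\vec{w}=\onevector\). The identity \(\adj(D)D=\det(D)I\) holds for every square matrix, so \(\adj(D)\onevector=\adj(D)D\vec{w}=\det(D)\vec{w}\). This gives \(\onevector^\transpose\adj(D)\onevector=\det(D)\matmag{D}\) uniformly, whether or not \(D\) is invertible, and it is what justifies the paper's one-liner. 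Your rank-\((n-1)\) argument is in effect re-proving \(\ker D\perp\onevector\), which is just the existence of this weighting.
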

 
\begin{proof}
    Let \(D_{(\hat\imath)}\) denote the matrix \(D\) with its \(i^\th\) row removed, and let \(D_{(\hat\imath\hat\jmath)}\) denote \(D\) with its \(i^\th\) row and \(j^\th\) column removed. By Laplace expansion of the determinant first along the left-most column and then along the top row, we find that
    \begin{align*}
        \det(N)
        &=
        \sum_{i=1}^n (-1)^i 
        \det
        \begin{pmatrix}
            \onevector^\transpose \\ D_{(\hat\imath)}
        \end{pmatrix}
        =
        \sum_{i=1}^n (-1)^i 
        \sum_{j=1}^n (-1)^{j+1} 
        \det(D_{(\hat\imath \hat\jmath)})
        =
        -\sum_{i,j=1}^n
        (-1)^{i+j} 
        \det(D_{(\hat\imath \hat\jmath)}).
    \end{align*}
    Here the  \((i,j)\)-summand is precisely the \((i,j)^\th\) cofactor of \(D\), or, in other words, the \((i,j)\)-entry of the adjugate matrix \(\adj(D)\). This proves the first part of the statement. Now, suppose \(X\) has at least two points. Then, by \Cref{lemma:neg-type-mag-finite}, the distance matrix has non-zero concentration and so
    \(\onevector^\transpose \adj(D) \onevector = \det(D)\matmag{D} = {\det(D)}/{\con{D}}\). This proves the second part.
\end{proof}

In light of \Cref{lem:det-N-adj-D}, \Cref{prop:det_nonzero_strictly_NT} can be rephrased as follows.

\begin{prop}
\label{prop:strict-neg-type-iff-mag-det}
A finite metric space \(X\) of negative type is of strictly negative type if and only if its distance matrix \(D\) is invertible and satisfies \(\con{D} \neq \infty\). \qed
\end{prop}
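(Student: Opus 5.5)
The statement to prove is \Cref{prop:strict-neg-type-iff-mag-det}. The plan is to combine \Cref{prop:det_nonzero_strictly_NT} with \Cref{lem:det-N-adj-D}, after disposing of the one-point case separately.

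\emph{One-point case.} If \(X\) has a single point, then \(X\) is trivially of strictly negative type, since the only mass-zero vector is \(\vec{0}\). But \(D = \begin{pmatrix} 0 \end{pmatrix}\) is not invertible. So the biconditional as literally stated fails here. I would therefore read the proposition as assuming \(X\) has at least two points, which is the standing hypothesis of the second half of \Cref{lem:det-N-adj-D}, and I would note this caveat explicitly.

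\emph{Main case, \(n \geq 2\).}

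\begin{enumerate}
    \item By \Cref{prop:det_nonzero_strictly_NT}, \(X\) is of strictly negative type if and only if \(\det(N) \neq 0\).
    \item By the first part of \Cref{lem:det-N-adj-D}, \(\det(N) = -\onevector^\transpose \adj(D)\onevector\).
    \item By \Cref{lemma:neg-type-mag-finite}\eqref{lemma:neg-type-mag-finite2}, \(\con{D} \in (0,\infty]\), so \(\con{D} \neq 0\).
\end{enumerate}

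\emph{Reverse direction.} Suppose \(D\) is invertible and \(\con{D} \neq \infty\). Then \(\con{D}\) is finite and nonzero. The second part of \Cref{lem:det-N-adj-D} gives \(\det(N) = -\det(D)/\con{D} \neq 0\), so \(X\) is of strictly negative type.

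\emph{Forward direction.} Suppose \(\det(N) \neq 0\). If \(\con{D} = \infty\), then \(\matmag{D} = 0\) by \Cref{thm:mag_val_reciprocal}. So there is a weighting \(\vec{w}\) with \(D\vec{w} = \onevector\) and \(\onevector^\transpose \vec{w} = 0\). Then
\[
\begin{pmatrix} 0 & \onevector^\transpose \\ \onevector & D \end{pmatrix}
\begin{pmatrix} -1 \\ \vec{w} \end{pmatrix} = \vec{0},
\]
and this vector is nonzero, contradicting the invertibility of \(N\). (Alternatively, \Cref{lemma:neg-type-mag-finite}\eqref{lemma:neg-type-mag-finite3} gives finiteness of \(\con{D}\) directly once strict negative type is known.) Hence \(\con{D}\) is finite and nonzero, and the second part of \Cref{lem:det-N-adj-D} yields \(\det(D) = -\con{D}\det(N) \neq 0\).

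\emph{Expected obstacle.} The only subtle point is the forward implication. There one must make sure that \(\det(N) \neq 0\) really forces both \(\con{D} < \infty\) and \(\det(D) \neq 0\). The formula \(\det(N) = -\det(D)/\con{D}\) would be ambiguous if \(\con{D} = \infty\), since then \(\det(D)/\con{D}\) is not defined. This is why I rule out \(\con{D} = \infty\) first, using the explicit kernel vector of \(N\) above, and only then apply the formula.
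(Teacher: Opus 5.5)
Your proposal is correct and is essentially the paper's argument: the paper obtains the proposition by rephrasing Proposition~\ref{prop:det_nonzero_strictly_NT} via $\det(N) = -\det(D)/\con{D}$ from Lemma~\ref{lem:det-N-adj-D}. Your kernel vector $(-1,\vec{w})^\transpose$ of $N$ spells out the case $\con{D}=\infty$, which the paper leaves implicit by reading $\det(D)/\con{D}$ as $\det(D)\matmag{D} = 0$. Your caveat is also right: the one-point space is vacuously of strictly negative type while $D = (0)$ is singular, so the statement needs $n \geq 2$, as in the second part of Lemma~\ref{lem:det-N-adj-D}.
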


Together with \Cref{thm:microwtg_D_invertible}, this establishes our most concrete result concerning the existence of microscopic weightings.

\begin{theorem}
\label{thm:strict_negative_type_microwtg}
Every finite metric space \(X\) of strictly negative type admits a microscopic weighting and therefore has the one-point property. The microscopic weighting is given by
\[\lim_{t \to 0} \vec{w}(t) = \con{D} D^{-1} \onevector\]
where \(D\) is the distance matrix of \(X\). In particular, these statements hold for every finite subset of Euclidean space, every finite subset of hyperbolic space, and every finite tree. \qed
\end{theorem}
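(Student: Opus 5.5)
The plan is to combine \Cref{prop:strict-neg-type-iff-mag-det} with \Cref{thm:microwtg_D_invertible}, treating the one-point space separately.

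\emph{The one-point space.} If \(X\) has a single point, then \(Z(t) = (1)\) for every \(t\). So \(\vec{w}(t) = (1)\) is constant, and the microscopic weighting \((1)\) exists trivially. The displayed formula \(\con{D} D^{-1}\onevector\) does not literally make sense here, since \(D = (0)\) is singular. So I would either exclude this case tacitly, or interpret the formula as the unique gauging \((1)\), and remark on this.

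\emph{At least two points.} Assume now that \(X\) has \(n \ge 2\) points and is of strictly negative type. By \Cref{prop:strict-neg-type-iff-mag-det}, \(D\) is invertible and \(\con{D} \neq \infty\). Equivalently, this follows from \Cref{prop:det_nonzero_strictly_NT} together with \Cref{lem:det-N-adj-D}: \(\det N \neq 0\) forces \(\det D \neq 0\), and \(\con{D}\) is finite by \Cref{lemma:neg-type-mag-finite}\eqref{lemma:neg-type-mag-finite3}. \Cref{thm:microwtg_D_invertible} then gives that \(X\) admits a microscopic weighting, equal to the unique gauging of \(D\).

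Since \(D\) is invertible, \Cref{lem:val0_det0} gives \(\con{D} \neq 0\). The vector \(\con{D} D^{-1}\onevector\) satisfies
\[
D\bigl(\con{D} D^{-1}\onevector\bigr) = \con{D}\,\onevector .
\]
Its entries sum to \(\con{D}\matmag{D} = 1\) by \Cref{thm:mag_val_reciprocal}. So it is a gauging, and by uniqueness it is the microscopic weighting. The one-point property then follows from \Cref{thm:microwtg_consequences}\eqref{thm:microwtg_consequences_1}.

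\emph{The examples.} It remains to check the named classes. Euclidean and hyperbolic spaces, and finite trees, are of strictly negative type, as cited from \cite{HLMT1998, HKM2002}. By \Cref{def:negative_type}, every finite subset of a space of strictly negative type is itself of strictly negative type.

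There is no real obstacle, since all the work is done by the earlier results. The only subtle points are handling the degenerate one-point case, and confirming that \Cref{prop:strict-neg-type-iff-mag-det} implicitly uses \(n \ge 2\) through \Cref{lem:det-N-adj-D}.
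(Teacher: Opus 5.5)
Your proposal is correct and follows the same route as the paper, which obtains the theorem directly by combining \Cref{prop:strict-neg-type-iff-mag-det} with \Cref{thm:microwtg_D_invertible}. Your separate treatment of the one-point space is a worthwhile refinement: that space is vacuously of strictly negative type but has singular distance matrix \(D = (0)\), so \Cref{prop:strict-neg-type-iff-mag-det} and the formula \(\con{D} D^{-1}\onevector\) genuinely require \(n \ge 2\), a restriction the paper leaves implicit.
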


For completeness, we conclude this section with an example illustrating that \(\det(D) = 0\) and \(\con{D} = \infty\) can occur for the same distance matrix \(D\).

\begin{example}
\label{eg:seven-points}
For \(0 < a \le 1\) and \(0 < b \le 2\) let \(Q(a, b)\) be the seven-point metric space with distance matrix
\[
    D_Q^{(a, b)}
    \coloneq
    \begin{pmatrix}
        0 & a & 2a & a & 1 & 1 & 1 \\
        a & 0 & a & 2a & 1 & 1 & 1 \\
        2a & a & 0 & a & 1 & 1 & 1 \\
        a & 2a & a & 0 & 1 & 1 & 1 \\
        1 & 1 & 1 & 1 & 0 & b & b \\
        1 & 1 & 1 & 1 & b & 0 & b \\
        1 & 1 & 1 & 1 & b & b & 0 \\
    \end{pmatrix}.
\]
This matrix always has determinant zero, since the sum of rows one and three is equal to the sum of rows two and four. Its concentration is given by
     \begin{align*}
        \con*{D_Q^{(a, b)}} &= 
        \begin{cases}
            1
            &
            \text{if } (a, b) = (1, 3/2)
            \\[0.5em]
            \dfrac{3 - 2ab}{6 - 3a - 2b}
            &
            \text{otherwise,}
        \end{cases}
    \end{align*}
    so is infinite whenever \(3a+2b = 6\) and \((a,b) \neq (1,3/2)\). A computation similar to that in \Cref{prop:PQ-negative-type} establishes that \(Q(a, b)\) is of negative type if and only if \(3a + 2b \le 6\). As \(\det(D_Q^{(a,b)})=0\) for all \((a,b)\), it is never of strictly negative type. 
\end{example}


\section{Interpreting the microscopic weighting}
\label{sec:geometric_interpretations}

We have seen that the microscopic weighting on a finite metric space---when it exists---is always a gauging for the distance matrix (\Cref{thm:microwtg_consequences}). In the setting of negative type spaces, gaugings for the distance matrix already appear in various guises across the metric geometry literature. In this section, we make use of that fact to provide three complementary interpretations of the microscopic weighting. In \Cref{subsec:schoenberg} we relate it to a classical construction in distance geometry, the \emph{Schoenberg polytope} (or \emph{Schoenberg embedding}). In \Cref{subsec:maximum-energy} we show that the microscopic weighting can be interpreted as an optimizing measure for an energy integral determined by the distance function. Then, in \Cref{subsec:curvature}, we observe that in the particular case of a graph equipped with the resistance distance, the microscopic weighting records a certain notion of discrete curvature.


\subsection{Microscopic weightings and the Schoenberg polytope}
\label{subsec:schoenberg}

Classically, spaces of negative type derive their significance from a theorem due to Schoenberg \cite{Schoenberg1935}. Schoenberg was interested in the following problem, suggested by work of Fr\'echet. Given a set of positive real numbers \(\{r_{ij} \mid 1 \leq i < j \leq n\}\), can one determine whether there exist points \(\vec{y}_1,\ldots, \vec{y}_n\) in \(\R^{n}\) such that \(\|\vec{y}_i - \vec{y}_j\| = r_{ij}\) for all \(i < j\)? He proved that such points exist if and only if the matrix of squares \((r_{ij}^2)_{i,j=1}^n\) is conditionally negative semi-definite. In fact, he proved the following sharper statement, paraphrased here to fit the terminology of this paper.

\begin{theorem}[{\cite[Theorem~1]{Schoenberg1935}}]
\label{thm:schoenberg}
Let \(R = \left(r_{ij} \right)_{i,j=1}^n\) be a symmetric real matrix such that \(r_{ii} = 0\) for all \(i\) and \(r_{ij} > 0\) for all \(i \neq j\). The following are equivalent.
\begin{enumerate}
\item There exist points \(\vec{y}_1, \ldots, \vec{y}_n\) in \(\R^n\) such that \(\|\vec{y}_i - \vec{y}_j\| = r_{ij}\) for all \(i < j\).
\item The first excess matrix \(E^1\) of the matrix \(R^{\odot 2} = (r_{ij}^2)_{i,j=1}^n\) is negative semi-definite.
\end{enumerate}
Moreover, if these conditions hold, then the set \(\{\vec{y}_1, \ldots, \vec{y}_n\} \subset \R^{n}\) lies within an affine subspace of dimension \(k = \mathrm{rank}(E^1)\), but not within an affine subspace of dimension \(k-1\).
\end{theorem}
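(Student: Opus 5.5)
The plan is to reduce everything to Gram matrices of difference vectors. The key identity concerns a configuration $\vec{y}_1,\ldots,\vec{y}_n$, with $\vec{x}_\ell \coloneq \vec{y}_{\ell+1}-\vec{y}_1$ for $\ell = 1,\ldots,n-1$. The polarization identity
\[
\vec{x}_\ell\cdot\vec{x}_m = \tfrac12\bigl(\|\vec{x}_\ell\|^2+\|\vec{x}_m\|^2-\|\vec{x}_\ell-\vec{x}_m\|^2\bigr)
\]
combined with \Cref{lem:excess-formula} applied to $A = R^{\odot 2}$ gives
\[
E^1_{\ell,m} = r_{\ell+1,m+1}^2 - r_{1,m+1}^2 - r_{\ell+1,1}^2 = -2\,G_{\ell,m},
\]
where $G_{\ell,m} \coloneq \tfrac12\bigl(r_{1,\ell+1}^2 + r_{1,m+1}^2 - r_{\ell+1,m+1}^2\bigr)$. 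Thus $E^1 = -2G$. Whenever the $\vec{y}_i$ realise the distances $r_{ij}$, the matrix $G$ is exactly the Gram matrix $(\vec{x}_\ell\cdot\vec{x}_m)_{\ell,m}$. I will first verify the displayed formula, since getting the sign and the factor $2$ right is the only delicate point; this is where I expect the main obstacle. The rest is standard linear algebra.

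For (1)$\Rightarrow$(2), suppose the points exist. Then $G$ is a Gram matrix, hence positive semi-definite, since $\vec{z}^\transpose G\vec{z} = \|\sum_\ell z_\ell\vec{x}_\ell\|^2 \ge 0$. Therefore $E^1 = -2G$ is negative semi-definite.

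For (2)$\Rightarrow$(1), suppose $E^1$ is negative semi-definite. Then $G \coloneq -\tfrac12 E^1$ is a symmetric positive semi-definite $(n-1)\times(n-1)$ matrix. By the spectral theorem we can write $G = B^\transpose B$ for some real $(n-1)\times(n-1)$ matrix $B$, for instance $B = G^{1/2}$. Let $\vec{x}_\ell\in\R^{n-1}\subset\R^n$ be the $\ell^\th$ column of $B$, and set $\vec{y}_1 = \vec{0}$ and $\vec{y}_{\ell+1} = \vec{x}_\ell$. Then $\|\vec{y}_{\ell+1}-\vec{y}_1\|^2 = G_{\ell\ell} = r_{1,\ell+1}^2$. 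Expanding, $\|\vec{y}_{\ell+1}-\vec{y}_{m+1}\|^2 = G_{\ell\ell}+G_{mm}-2G_{\ell m}$, which simplifies to $r_{\ell+1,m+1}^2$ by the definition of $G$. Taking positive square roots, using $r_{ij}>0$, gives $\|\vec{y}_i-\vec{y}_j\| = r_{ij}$.

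For the dimension statement, take any realisation. The smallest affine subspace containing $\{\vec{y}_1,\ldots,\vec{y}_n\}$ is $\vec{y}_1 + \operatorname{span}\{\vec{x}_1,\ldots,\vec{x}_{n-1}\}$. The dimension of this span is the rank of the matrix $B$ whose columns are the $\vec{x}_\ell$, and $\operatorname{rank}(B) = \operatorname{rank}(B^\transpose B) = \operatorname{rank}(G) = \operatorname{rank}(E^1) = k$. Hence the points lie in an affine subspace of dimension $k$ but in none of dimension $k-1$. This holds for every realisation, not only the one constructed above, because $G$ is determined by the $r_{ij}$ alone.
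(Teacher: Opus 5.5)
Your proof is correct, and your sign is the right one. Since $r_{11}=0$, the formula of \Cref{lem:excess-formula} gives exactly $E^1=-2G$; for a general symmetric $A$ that formula would need an extra $+A_{11}$. This matches condition (2) and the definition of $E^1$ through $\theta^1$. Be aware that \Cref{lem:negative-type-criterion} and \Cref{prop:PQ-negative-type} tacitly use the opposite, ``triangle excess'', sign.

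The paper does not prove this theorem itself; it cites Schoenberg. It does afterwards describe an embedding by double centering: it factors $G_c=-\tfrac12 J R^{\odot 2} J$, with $J=I-\tfrac1n\onematrix$, via an eigendecomposition $G_c=Q\Lambda Q^\transpose$. That is the Gram matrix of the configuration relative to its centroid, whereas yours is relative to the vertex $\vec{y}_1$. Both are $-\tfrac12$ times the quadratic form of $R^{\odot 2}$ on $\measmasszero{n}$, pulled back along the isomorphism $\theta^1$ in your case and along the surjection $J$ in the paper's. So semi-definiteness and rank agree.

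Your vertex-based choice buys directness. Condition (2) and the rank clause are phrased in terms of $E^1$, so no translation is needed. Your remark that every realisation has the same difference-vector Gram matrix settles the ``not within dimension $k-1$'' clause for all realisations at once. The centroid-based choice is symmetric in the points and needs no distinguished vertex. It yields coordinates for all $n$ points from a single $n\times n$ factorization, which is how the paper produces the explicit embeddings in \Cref{eg:schoenberg_polytope}.
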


Since the excess matrices of an \(n \times n\) matrix have rank at most \(n-1\), Schoenberg's theorem implies that an \(n\)-point metric space \((X,d)\) is of negative type if and only if the metric space \((X, d^{1/2})\) can be embedded isometrically into \(\R^{n-1}\). In this way one can associate to every finite space \((X, d)\) of negative type a certain convex polytope, namely the convex hull of any such embedding.

\begin{definition}
Given a metric space \(X = (\{x_1, \ldots, x_n\},d_X)\) of negative type, let \(\{\vec{y}_1, \ldots, \vec{y}_n\} \subset \R^{n-1}\) be any set of points such that \(\|\vec{y}_i - \vec{y}_j\|^2 = d_X(x_i,x_j)\). The \define{Schoenberg polytope} associated to \(X\) is the convex hull of the set \(\{\vec{y}_1, \ldots, \vec{y}_n\}\). It is defined up to Euclidean isometries. We will denote it by \(\Delta(X)\).
\end{definition}

The last sentence in the statement of \Cref{thm:schoenberg} implies that an \(n\)-point space \(X\) of negative type is of strictly negative type if and only if the polytope \(\Delta(X)\) has dimension \(n-1\), which is to say that it is a geometric \((n-1)\)-simplex. For \(n \geq 2\), every such simplex has a unique \demph{circumsphere}: an \((n-2)\)-sphere that passes through all its vertices. The centre of that sphere is called the \emph{circumcentre} of the simplex and the radius is its \emph{circumradius}.

If \(X\) is not of strictly negative type, the theorem implies that \(\Delta(X)\) has dimension strictly less than \(n-1\). In that case, its vertices need not lie on a sphere: see \Cref{fig:schoenberg_polytope} and \Cref{eg:schoenberg_polytope} for an example drawn from our family of five-point spaces. In fact, the existence of a sphere containing the vertices of \(\Delta(X)\) is {equivalent} to the existence of a gauging for the distance matrix of \(X\). This is proved by Tarazaga, Hayden and Wells.

\begin{theorem}[{\cite[Theorem~3.4]{TarazagaHaydenWells}}]
\label{thm:circum-euclidean_distance_matrix}
    Given a set of points \(\vec{y}_1, \ldots, \vec{y}_n\) in \(\R^n\), let \(A \in 
    \R^{n \times n}\) be the matrix with entries \(A_{ij} = \|\vec{y}_i - \vec{y}_j\|^2\). The points \(\vec{y}_1,\ldots,\vec{y}_n\) lie on a sphere if and only if there exist \(\vec{u} \in \R^n\) and \(c \in \R\) such that \(A\vec{u} = c \onevector\) and \(\onevector^\transpose \vec{u} = 1\).
\end{theorem}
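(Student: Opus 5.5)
The plan is to prove both directions directly, using the standard expansion of squared distances in terms of inner products. Throughout, write \(A_{ij} = \|\vec{y}_i\|^2 - 2\vec{y}_i^\transpose\vec{y}_j + \|\vec{y}_j\|^2\). For any \(\vec{u}\) with \(\onevector^\transpose\vec{u} = 1\), set \(\vec{p} = \sum_j u_j \vec{y}_j\). Then
\[
(A\vec{u})_i = \|\vec{y}_i\|^2 - 2\vec{y}_i^\transpose\vec{p} + \sum_j u_j\|\vec{y}_j\|^2 = \|\vec{y}_i - \vec{p}\|^2 + \Bigl(\sum_j u_j \|\vec{y}_j\|^2 - \|\vec{p}\|^2\Bigr).
\]
The second bracket does not depend on \(i\). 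This identity drives both implications.

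\emph{(\(\Leftarrow\))} Suppose \(A\vec{u} = c\onevector\) with \(\onevector^\transpose\vec{u} = 1\). The identity shows that \(\|\vec{y}_i - \vec{p}\|^2\) equals a constant \(c - \kappa\), where \(\kappa\) is the bracket. It remains to check that this constant is nonnegative, so that it is a genuine squared radius. This is automatic, because it equals a squared norm. Hence all \(\vec{y}_i\) lie on the sphere centred at \(\vec{p}\) with radius \(\sqrt{c-\kappa}\). We also note that \(c - \kappa = \tfrac12 c\), because \(\vec{u}^\transpose A\vec{u} = c\) and \(\vec{u}^\transpose A \vec{u} = 2\sum_j u_j\|\vec{y}_j\|^2 - 2\|\vec{p}\|^2 = 2\kappa\). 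This recovers the radius \(\sqrt{c/2}\), which is useful for the later theorem.

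\emph{(\(\Rightarrow\))} Suppose \(\|\vec{y}_i - \vec{q}\| = r\) for all \(i\). The difficulty is that the centre \(\vec{q}\) need not lie in the affine hull of the points, and the \(\vec{u}\) must express a centre in barycentric coordinates. So first replace \(\vec{q}\) by its orthogonal projection \(\vec{p}\) onto the affine hull \(L\) of \(\{\vec{y}_i\}\). By Pythagoras, \(\|\vec{y}_i - \vec{p}\|^2 = r^2 - \|\vec{q}-\vec{p}\|^2\) is still constant. Since \(\vec{p} \in L\), we can write \(\vec{p} = \sum_j u_j\vec{y}_j\) with \(\sum_j u_j = 1\); such \(\vec{u}\) exist, though they need not be unique. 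The identity then gives \((A\vec{u})_i = \text{const}\), so \(A\vec{u} = c\onevector\).

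The only step needing care is this projection to the affine hull in the forward direction, which ensures that the centre is an affine combination of the points. Everything else is the routine algebraic identity above.
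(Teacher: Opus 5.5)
Your proof is correct and complete. The paper does not prove this statement itself: it imports it from Tarazaga--Hayden--Wells and adds only the remark that a gauging \(\vec{u}\) of concentration \(c\) places the points on the sphere of radius \(\sqrt{c/2}\) centred at \(\sum_i u_i \vec{y}_i\).

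Your identity \((A\vec{u})_i = \|\vec{y}_i - \vec{p}\|^2 + \kappa\), combined with \(c = \vec{u}^\transpose A \vec{u} = 2\kappa\), derives exactly that remark. In the forward direction, projecting the centre orthogonally onto the affine hull is the right idea, because the given centre need not be an affine combination of the points.

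Your argument is therefore a self-contained, elementary replacement for the citation. It needs no negative-type or Gram-matrix machinery, and it yields the explicit centre and radius that Theorem~\ref{thm:neg_type_schoenberg_sphere} relies on.

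One small point: your sphere has radius zero exactly when \(c = 0\), which means all the \(\vec{y}_i\) coincide. In that case they also lie on a sphere of positive radius, so nothing is lost if you prefer spheres to have positive radius.
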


Concretely, if the matrix \(A\) in \Cref{thm:circum-euclidean_distance_matrix} has a gauging \(\vec{u}\) of concentration \(c\), then the points \(\vec{y}_1, \ldots, \vec{y}_n\) lie on the sphere of radius \(\sqrt{c/2}\) centered at \(\sum_{i=1}^n u_i \vec{y}_i\). This provides our first geometric characterization of the microscopic weighting.

\begin{theorem}
\label{thm:neg_type_schoenberg_sphere}
Let \(X\) be a finite metric space of negative type, and suppose \(X\) admits a microscopic weighting, \(\micro\). Then, given any embedding of \(\Delta(X)\) into \(\mathbb{R}^{n-1}\), the vertices \(\vec{y}_1, \ldots, \vec{y}_n\) lie on the sphere centered at the point
\begin{equation}\label{eq:neg_type_schoenberg_sphere}
\vec{p} = \sum_{i=1}^n \widehat{w}_i \vec{y}_i
\end{equation}
whose radius is \(r = \sqrt{\frac{1}{2} \mu_X'(0)}\). 
\qed
\end{theorem}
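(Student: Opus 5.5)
The plan is to combine \Cref{thm:microwtg_consequences} with the concrete form of \Cref{thm:circum-euclidean_distance_matrix}. Write \(D\) for the distance matrix of \(X\). Given an embedding \(\vec{y}_1, \ldots, \vec{y}_n\) of the vertices of \(\Delta(X)\) in \(\R^{n-1}\), we have \(\|\vec{y}_i - \vec{y}_j\|^2 = D_{ij}\). So the matrix \(A\) of squared Euclidean distances in \Cref{thm:circum-euclidean_distance_matrix} is exactly \(D\), after regarding \(\R^{n-1}\) as a subspace of \(\R^n\), which changes no distances. By \Cref{thm:microwtg_consequences}, part (\ref{thm:microwtg_consequences_3}), the microscopic weighting satisfies \(\onevector^\transpose \micro = 1\) and \(D\micro = \mu_X'(0)\onevector\). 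Thus \(\micro\) is a gauging for \(A = D\) with concentration \(c = \mu_X'(0)\), and the concrete statement following \Cref{thm:circum-euclidean_distance_matrix} yields the sphere with the claimed centre and radius.

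Because that concrete statement is only asserted rather than proved, I would check it directly. Set \(\vec{p} = \sum_j \widehat{w}_j \vec{y}_j\) and fix an index \(i\). Using \(\sum_j \widehat{w}_j = 1\),
\[
\vec{y}_i - \vec{p} = \sum_j \widehat{w}_j (\vec{y}_i - \vec{y}_j).
\]
Expanding the norm with the polarization identity \(2(\vec{y}_i-\vec{y}_j)\cdot(\vec{y}_i-\vec{y}_k) = D_{ij} + D_{ik} - D_{jk}\) gives
\[
\|\vec{y}_i - \vec{p}\|^2 = \tfrac12 \sum_{j,k} \widehat{w}_j \widehat{w}_k \bigl(D_{ij} + D_{ik} - D_{jk}\bigr).
\]
Now use \(\sum_k \widehat{w}_k = 1\) together with \((D\micro)_i = c\) for every \(i\). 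The first two terms each contribute \(c\), and the last contributes \(\micro^\transpose D \micro = c\,\onevector^\transpose\micro = c\). Hence \(\|\vec{y}_i - \vec{p}\|^2 = \tfrac12(c + c - c) = c/2\), which is independent of \(i\). This computation also confirms that \(c \ge 0\), so the square root \(r = \sqrt{\tfrac12 \mu_X'(0)}\) makes sense. Alternatively, \Cref{lemma:neg-type-mag-finite} gives \(c \ge 0\) directly.

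The only mild subtlety is the bookkeeping in this polarization expansion. Everything else is a direct citation of \Cref{thm:microwtg_consequences}. The negative type hypothesis is used only to guarantee that the embedding \(\Delta(X)\) exists.
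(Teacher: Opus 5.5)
Your proposal is correct and follows the paper's route: \Cref{thm:microwtg_consequences} makes \(\micro\) a gauging for \(D\) with concentration \(\mu_X'(0)\), and the concrete form of \Cref{thm:circum-euclidean_distance_matrix} then gives the sphere with the stated centre and radius. Your polarization computation showing \(\|\vec{y}_i - \vec{p}\|^2 = \tfrac12\mu_X'(0)\) correctly and self-containedly verifies the step the paper leaves to citation.
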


If \(X\) is not of strictly negative type, then equation \eqref{eq:neg_type_schoenberg_sphere} is just one of infinitely many ways to express the point \(\vec{p}\) as a linear combination of the vertices \(\vec{y}_1,\ldots, \vec{y}_n\). However, if \(X\) is of strictly negative type, then this expression is unique: it specifies \(\vec{p}\) in \emph{barycentric coordinates} relative to the simplex \(\Delta(X)\).

\begin{cor}
\label{cor:strict_neg_type_schoenberg_sphere}
If \(X\) is a finite space of strictly negative type, then \(\micro\) specifies the circumcentre of \(\Delta(X)\) in barycentric coordinates, and the circumradius is 
\(\sqrt{\frac{1}{2} \mu_X'(0)}\). \qed
\end{cor}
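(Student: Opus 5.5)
The plan is to combine \Cref{thm:neg_type_schoenberg_sphere} with \Cref{thm:strict_negative_type_microwtg}. By \Cref{thm:strict_negative_type_microwtg}, a finite space \(X\) of strictly negative type admits a microscopic weighting \(\micro\). So \Cref{thm:neg_type_schoenberg_sphere} applies. Given any embedding of \(\Delta(X)\) into \(\R^{n-1}\), it says the vertices \(\vec{y}_1,\ldots,\vec{y}_n\) lie on the sphere of radius \(\sqrt{\tfrac12 \mu_X'(0)}\) centred at \(\vec{p} = \sum_i \widehat{w}_i \vec{y}_i\).

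Next we identify this sphere with the circumsphere. Since \(X\) is of strictly negative type, the last clause of \Cref{thm:schoenberg} shows that \(\Delta(X)\) is an \((n-1)\)-simplex in \(\R^{n-1}\). Its vertices are therefore affinely independent. A sphere in \(\R^{n-1}\) through \(n\) affinely independent points is unique. To see this, note that the centre \(\vec{c}\) must satisfy the \(n-1\) linear equations
\[
2(\vec{y}_i - \vec{y}_1)^\transpose \vec{c} = \|\vec{y}_i\|^2 - \|\vec{y}_1\|^2, \qquad i = 2,\ldots,n,
\]
and these equations have a linearly independent coefficient matrix. Hence the sphere produced by \Cref{thm:neg_type_schoenberg_sphere} is the circumsphere. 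Its centre \(\vec{p}\) is the circumcentre, and the circumradius is \(\sqrt{\tfrac12\mu_X'(0)}\).

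Finally we address the barycentric claim. By \Cref{thm:microwtg_consequences}, \(\onevector^\transpose\micro = 1\), so \(\vec{p} = \sum_i \widehat{w}_i \vec{y}_i\) is an affine combination of the vertices. Affine independence makes the coefficients of such a combination unique, so \(\micro\) is exactly the barycentric coordinate vector of the circumcentre. For \(n = 1\) the statement is degenerate: the polytope is a point, \(\mu_X'(0) = \con{D} = 0\), and \(\micro = (1)\), so the claim is consistent.

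No step here is a real obstacle. The only care needed is in the uniqueness of the sphere through affinely independent points, which is the linear-algebra fact spelled out above.
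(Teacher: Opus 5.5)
Your proposal is correct and follows the argument the paper leaves implicit. You combine \Cref{thm:strict_negative_type_microwtg} with \Cref{thm:neg_type_schoenberg_sphere}, then use that \(\Delta(X)\) is an \((n-1)\)-simplex to get uniqueness of the circumsphere and, via \(\onevector^\transpose\micro = 1\), uniqueness of the barycentric coefficients; you simply write out the linear-algebra details the paper omits.
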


For illustration, we will return to the family of spaces introduced in \Cref{eg:five-points}. In order to draw the associated polytopes, we first specify an embedding of \(\Delta(X)\) into \(\R^{n-1}\). This is  the embedding described in Schoenberg's proof; it also appears in the modern statistics literature, in the context of multidimensional scaling, under the name \emph{double centering}---see, for example, \cite[\S12.1]{BorgGroenen}.

First, given a metric space \(X\) of negative type with distance matrix \(D\), define 
\[G = -\frac{1}{2} \left(I - \frac{1}{n} \onematrix\right)^\transpose D \left(I - \frac{1}{n} \onematrix\right).\]
As \(G\) is real and symmetric, it can be decomposed as \(G = Q \Lambda Q^\transpose\) where the columns of \(Q\) are orthonormal eigenvectors for \(G\) and \(\Lambda\) is the diagonal matrix of eigenvalues. The key to Schoenberg's theorem is that \(D\) is conditionally negative semi-definite if and only if \(G\) is positive definite. In that case its eigenvalues are non-negative and we can write \(G = Y^\transpose Y\) where \(Y^\transpose = Q \Lambda^{\odot 1/2}\). (That is, \(G\) is the \emph{Gram matrix} of the columns of \(Y\).) One can check that the columns \(\vec{y}_1,\ldots, \vec{y}_n\) of \(Y\) satisfy \(\|\vec{y}_i - \vec{y}_j\|^2 = D_{ij}\). Thus, the convex hull of \(\{\vec{y}_1,\ldots, \vec{y}_n\}\) is one embedding of \(\Delta(X)\) into \(\R^{n-1}\).

\begin{figure}
    \centering
\tdplotsetmaincoords{80}{99}

\begin{tikzpicture}[tdplot_main_coords, scale=3, line join=round, line cap=round]

  \def\r{.816} 
  \def\h{.577} 
  \def\R{\r} 
  \def\H{\h} 
  \def\s{1} 

  \coordinate (A) at (\r,0,0);
  \coordinate (B) at (-0.5*\r,  0.8660254*\r, 0);
  \coordinate (C) at (-0.5*\r, -0.8660254*\r, 0);

  \coordinate (U) at (0,0,\h);   
  \coordinate (L) at (0,0,-\h);  

  \coordinate (P1) at (-\s,-\s,-\s);
  \coordinate (P2) at ( \s,-\s,-\s);
  \coordinate (P3) at ( \s, \s,-\s);
  \coordinate (P4) at (-\s, \s,-\s);
  \coordinate (P5) at (-\s,-\s, \s);
  \coordinate (P6) at ( \s,-\s, \s);
  \coordinate (P7) at ( \s, \s, \s);
  \coordinate (P8) at (-\s, \s, \s);

  \draw[gray!60]
    (P1)--(P2)--(P3)--(P4)--cycle
    (P5)--(P6)--(P7)--(P8)--cycle
    (P1)--(P5) (P2)--(P6) (P3)--(P7) (P4)--(P8);

  \draw[gray!60] (P1) -- (P2);
  \draw[gray!60] (P1) -- (P4);
  \draw[gray!60] (P1) -- (P5);

  \draw[dashed, gray!60, samples=200, domain=0:360]
    plot ({\R*cos(\x)}, {\R*sin(\x)}, {0});
  \draw[dashed, gray!60, samples=200, domain=0:360]
    plot ({0}, {\H*cos(\x)}, {\H*sin(\x)});

  \draw[black, thick]
    (A)--(B)--(C)--cycle        
    (U)--(A) (U)--(B) (U)--(C)  
    (L)--(A) (L)--(B) (L)--(C); 

  
    \node at (0,0,-1.4) {$\Delta(P_{\mathrm{C}})$};

\end{tikzpicture}
    \quad\quad\quad
\tdplotsetmaincoords{80}{99}

\begin{tikzpicture}[tdplot_main_coords, scale=3, line join=round, line cap=round]

  \def\R{.707}   
  \def\r{\R}
  \def\h{\R}
  \def\s{1} 

  \coordinate (A) at (\r,0,0);
  \coordinate (B) at (-0.5*\r,  0.8660254*\r, 0);
  \coordinate (C) at (-0.5*\r, -0.8660254*\r, 0);
  \coordinate (U) at (0,0,\h);
  \coordinate (L) at (0,0,-\h);

  \coordinate (P1) at (-\s,-\s,-\s);
  \coordinate (P2) at ( \s,-\s,-\s);
  \coordinate (P3) at ( \s, \s,-\s);
  \coordinate (P4) at (-\s, \s,-\s);
  \coordinate (P5) at (-\s,-\s, \s);
  \coordinate (P6) at ( \s,-\s, \s);
  \coordinate (P7) at ( \s, \s, \s);
  \coordinate (P8) at (-\s, \s, \s);

  \draw[gray!60]
    (P1)--(P2)--(P3)--(P4)--cycle
    (P5)--(P6)--(P7)--(P8)--cycle
    (P1)--(P5) (P2)--(P6) (P3)--(P7) (P4)--(P8);

  \draw[gray!60] (P1) -- (P2);
  \draw[gray!60] (P1) -- (P4);
  \draw[gray!60] (P1) -- (P5);

  \draw[dashed, gray!60, samples=200, domain=0:360]
    plot ({\R*cos(\x)}, {\R*sin(\x)}, {0});
  \draw[dashed, gray!60, samples=200, domain=0:360]
    plot ({0}, {\R*cos(\x)}, {\R*sin(\x)});

  \draw[black, thick]
    (A)--(B)--(C)--cycle        
    (U)--(A) (U)--(B) (U)--(C)  
    (L)--(A) (L)--(B) (L)--(C); 
    
    \node at (0,0,-1.4) {$\Delta(P_{\mathrm{E}})$};

\end{tikzpicture}
    \caption{
    The Schoenberg polytopes associated to two members of the family of spaces in \Cref{eg:five-points}. The vertices of \(\Delta(P_{\mathrm{E}})\) lie on a 2-sphere, while those of \(\Delta(P_{\mathrm{C}})\) do not. See \Cref{eg:schoenberg_polytope}.
    }
    \label{fig:schoenberg_polytope}
\end{figure}

\begin{example}
\label{eg:schoenberg_polytope}
\Cref{prop:PQ-negative-type} tells us that the space \(P(a, b)\) is of negative type whenever \(4a + 3b \le 12\). In that case, in the notation of the previous paragraph,
\[Y^\transpose = 
\begin{pmatrix}
    \frac{1}{15}  \sqrt{3} \sqrt{-4  a - 3  b + 12}
    & \frac{1}{2}  \sqrt{a}
    & -\frac{1}{6}  \sqrt{3} \sqrt{a}
    & 0 & 0 \\
    \frac{1}{15}  \sqrt{3} \sqrt{-4  a - 3  b + 12}
    & 0
    & \frac{1}{3}  \sqrt{3} \sqrt{a}
    & 0 & 0\\
    \frac{1}{15}  \sqrt{3} \sqrt{-4  a - 3  b + 12}
    & -\frac{1}{2}  \sqrt{a}
    & -\frac{1}{6}  \sqrt{3} \sqrt{a}
    & 0 & 0\\
    -\frac{1}{10}  \sqrt{3} \sqrt{-4  a - 3  b + 12}
    & 0
    & 0
    & \frac{1}{2}  \sqrt{b} & 0\\
    -\frac{1}{10}  \sqrt{3} \sqrt{-4  a - 3  b + 12}
    & 0
    & 0
    & -\frac{1}{2}  \sqrt{b} & 0
\end{pmatrix}.
\]
The rows of \(Y^\transpose\) give the embedding into \(\R^5\) of the points \(\vec{y}_1,\ldots, \vec{y}_5\). As the last column is zero we see that they embed into \(\R^4 \subset \R^5\), as expected. When \(4a + 3b < 12\) the space \(P(a,b)\) is of strictly negative type, so \(\Delta(P(a,b))\) is 4-dimensional. But when \(4a + 3b = 12\), the space \(P(a,b)\) is \emph{not} of strictly negative type: the first column of \(Y^\transpose\) also vanishes, so \(\Delta(P(a,b))\) embeds into \(\R^3 \subset \R^5\) and we can draw it. 

\Cref{fig:schoenberg_polytope} shows two examples: the Schoenberg polytopes associated to the spaces \(P_{\mathrm{C}} = P(2,4/3)\) and \(P_{\mathrm{E}} = P(3/2,2)\) that are found at the end-points of the red dotted line in \Cref{fig:graph-family}. Notice that the vertices of \(\Delta(P_{\mathrm{E}})\) lie on a 2-sphere while those of \(\Delta(P_{\mathrm{C}})\) do not. Indeed, from \Cref{eg:five-points} we know that \(P_{\mathrm{E}}\) is the only space on the red line for which the distance matrix has a gauging, and it follows from \Cref{thm:circum-euclidean_distance_matrix} that it is the only space on the red line for which the vertices of the Schoenberg polytope lie on a sphere. Travelling from \(P_{\mathrm{C}}\) to \(P_{\mathrm{E}}\) along the red line in \Cref{fig:graph-family}, the family of polytopes \(\{\Delta(P(a,b)) \mid 4a+3b=12\}\) interpolates between \(\Delta(P_{\mathrm{C}})\) and \(\Delta(P_{\mathrm{E}})\) by `squeezing at the middle' until the two circles indicated in \Cref{fig:schoenberg_polytope} are great circles of the same sphere.
\end{example}


\subsection{Microscopic weightings as energy-maximizing measures}
\label{subsec:maximum-energy}

Suppose \(X\) is a finite metric space of negative type that admits a microscopic weighting. \Cref{thm:neg_type_schoenberg_sphere} offers a characterization of the microscopic weighting that is satisfying from the perspective of classical distance geometry, but one might hope for an interpretation that is more intrinsic to the geometry of \(X\). To that end, we now show that the microscopic weighting can be interpreted as an \emph{energy-maximizing measure} on \(X\).

First, let us explain what is meant here by `energy'. Given a measurable space \(Y\), a measurable function \(f \colon Y \times Y \to \R\), and a signed measure \(\nu\) on \(Y\), let
\[E_f(\nu) = \iint_{Y \times Y} f(y, y')\,\dd\nu(y)\dd\nu(y').\]
If one thinks of \(f\) as a \emph{potential function} and \(\nu\) as representing a distribution of charge across \(Y\), then the integral \(E_f(\nu)\) represents the \emph{total potential energy} of \(\nu\) with respect to \(f\). A classical example, when \(Y\) is a subset of \(\R^m\) for \(m > 2\), takes the potential function \(f\) to be the {Newtonian kernel}, defined by \(f(y,y') \coloneq c \|y - y'\|^{2-m}\) for a certain dimensional constant \(c\).

The Newtonian kernel is typical of potentials with physical origins, in that \(f(y,y')\) decays as the distance between \(y\) and \(y'\) increases. Since the mid-twentieth century, however, interest in energy integrals has extended beyond such physical examples. For instance, Bj\"orck~\cite{Bjoerck:Distributions-of-positive-mass} takes \(Y\) to be a subset of Euclidean space and \(f\) to be a positive power of the distance function: \(f(y, y') \coloneq \|y - y'\|^\alpha\) with \(\alpha > 0\). In contrast to more traditional energy integrals---for example, with \(\alpha <0\)---one is interested in this case in maximizing the integral over some space of measures of fixed mass, rather than minimizing it. The intuition in both cases is that the optimizing measures represent maximally `spread-out' distributions of mass, or charge, across \(Y\). 

Nickolas and Wolf~\cite{Nickolas-Wolf:distance-geometry-I, Nickolas-Wolf:distance-geometry-II, Nickolas-Wolf:distance-geometry-III}, following Alexander and Solarsky~\cite{Alexander-Stolarsky:Extremal-problems}, extend these ideas to metric spaces that are not Euclidean, but fix the exponent \(\alpha = 1\). They investigate a `geometric constant' \(M(X)\) that is defined as follows.

\begin{definition}
\label{def:d-energy-infinite}
    Given a compact metric space \(Y\), let \(\measmassone(Y)\) denote the set of finite, signed, Borel measures of mass 1 on \(Y\). For \(\nu \in \measmassone(Y)\) define the \define{energy} of \(\nu\) to be
    \[
        I(\nu) \coloneq E_d(\nu) = \iint_{Y \times Y} d(y, y') \, \dd \nu(y) \,\dd \nu(y').
    \]
    Denote by \(M(Y)\) the \define{maximal energy} of \(Y\)---that is,
    \[
        M(Y) \coloneq
        \sup\{I(\nu) \mid \nu \in \measmassone(Y)\}.
    \]
    If \(\nu \in \measmassone(Y)\) satisfies \(I(\nu) = M(Y)\), we say that \(\nu\) is an \define{energy-maximizing measure}.  If a sequence \(\nu_1, \nu_2, \ldots \in \measmassone(Y)\) satisfies \(\lim_{q \to \infty} I(\nu_q) = M(Y)\), it is said to be an \define{energy-maximizing family of measures}.
\end{definition}

Nickolas and Wolf are interested almost exclusively in compact metric spaces of negative type (which are \demph{quasihypermetric spaces} in their terminology). The following fact, which they prove in the generality of compact spaces, explains why they restrict their attention to this class.

\begin{prop}[{\cite[Theorem~3.1]{Nickolas-Wolf:distance-geometry-I}}]
    \label{thm:non-negative-type-unbound-energy}
    If \(X\) is a finite metric space that is not of negative type, then \(M(X) = \infty\). In particular, no energy-maximizing measure exists on \(X\).
\end{prop}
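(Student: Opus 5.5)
Since \(X\) is finite, \(\measmassone(X)\) is just the affine hyperplane \(\{\vec{v} \in \R^n \mid \onevector^\transpose \vec{v} = 1\}\), and \(I(\vec{v}) = \vec{v}^\transpose D \vec{v}\). The plan is to show that \(M(X) = \infty\) by exhibiting a family of mass-one vectors whose energy is unbounded.

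Because \(X\) is not of negative type, \(D\) is not conditionally negative semi-definite. By \Cref{def:cond-neg-semi-def}, this gives a mass-zero vector \(\vec{u} \in \measmasszero{n}\) with \(\vec{u}^\transpose D \vec{u} > 0\). Fix any mass-one vector, say \(\vec{e} = \vec{e}_1\) (the point mass at \(x_1\)). For \(s \in \R\), set \(\vec{v}_s = \vec{e} + s\vec{u}\); then \(\onevector^\transpose \vec{v}_s = 1\). Expanding the quadratic form gives
\[
I(\vec{v}_s) = \vec{e}^\transpose D \vec{e} + 2s\, \vec{e}^\transpose D \vec{u} + s^2\, \vec{u}^\transpose D \vec{u}.
\]
This is a quadratic in \(s\) with positive leading coefficient, so it tends to \(+\infty\) as \(s \to \infty\). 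Hence \(\sup I = \infty\), that is, \(M(X) = \infty\).

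The second claim follows immediately. An energy-maximizing measure \(\nu\) would satisfy \(I(\nu) = M(X) = \infty\), but \(I(\nu)\) is a finite real number for every \(\nu \in \measmassone(X)\), since it is a finite sum. So no energy-maximizing measure exists.

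There is no real obstacle here. The only point needing care is checking that perturbing by a mass-zero vector keeps the vector in \(\measmassone(X)\), which it does.
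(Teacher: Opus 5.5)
Your proposal is correct and matches the paper's proof: both take a mass-zero vector \(\vec{u}\) with \(\vec{u}^\transpose D \vec{u} > 0\), add large multiples of it to a fixed mass-one vector, and observe that the energy grows quadratically without bound. You also state explicitly why no maximizer can exist (\(I\) is finite-valued on \(\measmassone(X)\)), a step the paper leaves implicit.
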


\begin{proof}
    For an \(n\)-point metric space, \(\measmassone(X)\) can be identified with \(\{\vec{u} \in \R^n \mid \onevector^\transpose \vec{u} = 1\}\). If \(X\) is not of negative type, there is some \(\vec{u} \in \R^n\) with \(\onevector^\transpose \vec{u} = 0\) and \(I(\vec{u}) > 0\).  So, for any \(\vec{v} \in \measmassone(X)\) and \(q \in \N\), we have \(\vec{v} + q \vec{u} \in \measmassone(X)\). Since \(I(\vec{v} + q \vec{u}) \to \infty\) as \(q \to \infty\), this shows that \(M(X) = \infty\).
\end{proof}

We are going to see that when a finite metric space of negative type admits a microscopic weighting, it is energy-maximizing. Again, the proof rests on the fact that a microscopic weighting is a gauging for the distance matrix. That gaugings are energy-maximizing measures is an observation that goes back to Alexander and Solarsky for subsets of Euclidean space \cite[Theorem~3.3]{Alexander-Stolarsky:Extremal-problems}; it is extended to compact spaces of negative type by Nickolas and Wolf \cite[Theorem~3.1]{Nickolas-Wolf:distance-geometry-II}. (In their terminology, a gauging is a \demph{\(d\)-invariant measure} and its concentration is its \demph{value}.) In the case of finite spaces we can give a concise proof, so we include it for clarity.

\begin{prop}
\label{prop:maximal-distance-energy}
    Let \(X\) be a finite metric space of negative type, with distance matrix \(D\).
    \begin{enumerate}
        \item \label{part:maximal-distance-energy-2} A measure \(\vec{v}\in \measmassone(X)\) is energy-maximizing if and only if \(\vec{v}\) is a gauging for \(D\).
        \item \label{part:maximal-distance-energy-3} We have \(M(X) = \con{D}\).
    \end{enumerate}
\end{prop}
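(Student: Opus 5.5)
The plan is to treat \(I\) as the quadratic form \(I(\vec{u}) = \vec{u}^\transpose D \vec{u}\) on the affine hyperplane \(\measmassone(X) = \{\vec{u} \in \R^n \mid \onevector^\transpose \vec{u} = 1\}\). Every point of this hyperplane can be written as \(\vec{v} + \vec{h}\) for a fixed base point \(\vec{v}\) and some mass-zero vector \(\vec{h} \in \measmasszero{n}\). The key identity, from symmetry of \(D\), is
\[
I(\vec{v} + \vec{h}) = I(\vec{v}) + 2\vec{h}^\transpose D \vec{v} + \vec{h}^\transpose D \vec{h}.
\]
Negative type says exactly that the last term is \(\le 0\) for every \(\vec{h} \in \measmasszero{n}\). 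So the problem reduces to the linear term.

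\emph{Gauging implies maximizing.} Suppose \(\vec{v}\) is a gauging with concentration \(c\). Then \(\vec{h}^\transpose D\vec{v} = c\,\onevector^\transpose \vec{h} = 0\) for every mass-zero \(\vec{h}\), so \(I(\vec{v}+\vec{h}) = I(\vec{v}) + \vec{h}^\transpose D\vec{h} \le I(\vec{v})\). Hence \(\vec{v}\) is energy-maximizing, and \(I(\vec{v}) = \vec{v}^\transpose (c\onevector) = c = \con{D}\).

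\emph{Maximizing implies gauging.} Suppose \(\vec{v} \in \measmassone(X)\) is energy-maximizing. For each \(\vec{h} \in \measmasszero{n}\), the function \(s \mapsto I(\vec{v} + s\vec{h})\) is a quadratic in \(s\) with a maximum at \(s = 0\). So its derivative there vanishes, giving \(\vec{h}^\transpose D\vec{v} = 0\) for all \(\vec{h}\) in the hyperplane. This puts \(D\vec{v}\) in \(\measmasszero{n}^\perp = \R\onevector\), so \(D\vec{v} = c\onevector\) and \(\vec{v}\) is a gauging. This proves part (\ref{part:maximal-distance-energy-2}).

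\emph{Part (\ref{part:maximal-distance-energy-3}).} If \(\con{D} < \infty\), a gauging exists, and by the first step \(M(X) = \con{D}\). The remaining case is \(\con{D} = \infty\), where I must show \(M(X) = \infty\); this is the step needing care. By \Cref{thm:mag_val_reciprocal}, \(\matmag{D} = 0\), so there is a weighting \(\vec{w}\) with \(D\vec{w} = \onevector\) and \(\onevector^\transpose \vec{w} = 0\), as in the proof of \Cref{lemma:neg-type-mag-finite}. Then \(\vec{w}\) is mass-zero with \(\vec{w}^\transpose D \vec{w} = \vec{w}^\transpose \onevector = 0\). Fix any \(\vec{v} \in \measmassone(X)\) and set \(\vec{h} = s\vec{w}\). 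The identity gives
\[
I(\vec{v} + s\vec{w}) = I(\vec{v}) + 2s\,\vec{w}^\transpose D\vec{v} = I(\vec{v}) + 2s\,\onevector^\transpose \vec{v} = I(\vec{v}) + 2s.
\]
This tends to \(\infty\) as \(s \to \infty\), so \(M(X) = \infty = \con{D}\). In this case part (\ref{part:maximal-distance-energy-2}) is consistent, since neither gaugings nor maximizers exist.
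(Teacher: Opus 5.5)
Your proposal is correct and follows the paper's proof essentially step for step. The paper also proves that gaugings are maximal by expanding \(I(\vec{v}-\vec{u}) \le 0\) for the mass-zero vector \(\vec{v}-\vec{u}\), and handles \(\con{D}=\infty\) with the same mass-zero weighting \(\vec{w}\), which gives \(I(\vec{u}+q\vec{w}) = I(\vec{u})+2q\); for the converse direction it invokes a Lagrange multiplier, and your directional-derivative argument is just an explicit version of that first-order condition.
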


\begin{proof}
    For \eqref{part:maximal-distance-energy-2}, we first show that an energy-maximizing measure is a gauging for \(D\). This is a standard application of a Lagrange multiplier. For the problem of maximizing \(\vec{u}^\transpose D \vec{u}\) subject to \(\onevector^\transpose \vec{u} = 1\), define the Lagrangian for \(\vec{u} \in \R^n\) and \(\gamma \in \R\) as
    \[
        \lagrangian(\vec{u}, \gamma) 
        \coloneq
        \vec{u}^\transpose D \vec{u} + \gamma (1 - \onevector^\transpose \vec{u}).
    \]
    Then we have 
    \[
        \nabla \lagrangian(\vec{u}, \gamma)
        =
        (\vec{u}^\transpose D - \gamma \onevector^\transpose, 1 - \onevector^\transpose \vec{u}).
    \]
    Lagrangian theory says that if \(\vec{v}\) is maximizing then there is a \(c \in \R\) such that \(\nabla\lagrangian(\vec{v}, c) = \zerovector^\transpose\).  That is,
    \(
        \vec{v}^\transpose D = c \onevector^\transpose
    \)
    and
    \(
        \onevector^\transpose \vec{v} = 1
    \),
    so \(\vec{v}\) is a gauging for \(D\).

    Next we show that a gauging is a maximal measure. This is the point at which we need \(X\) to be of negative type.  Let \(\vec{v}\) be a gauging, so \(D\vec{v} = \con{D} \onevector\) and \(\onevector^\transpose \vec{v} = 1\).  For \(\vec{u} \in \measmassone(X)\) we need to show that \(I(\vec{v}) \ge I(\vec{u})\).
    We have that \(\onevector^\transpose(\vec{v} - \vec{u}) = 0\), so, as \(X\) is of negative type, we also have \(I(\vec{v} - \vec{u}) \le 0\).  Expanding that out gives \(2\vec{v}^\transpose D \vec{u} \ge I(\vec{v}) + I(\vec{u})\) and hence
    \begin{align*}
        2I(\vec{v})
        &=
        2 \vec{v}^\transpose D \vec{v} 
        = 
        2 \con{D} 
        = 
        2 \con{D} \onevector^\transpose \vec{u}
        =
        2 \vec{v}^\transpose D \vec{u}
        \ge 
        I(\vec{v}) + I(\vec{u}).
    \end{align*}
    Thus \(I(\vec{v}) \ge I(\vec{u})\) and so \(\vec{v}\) is maximal, as required.

    For \eqref{part:maximal-distance-energy-3}, note that \(\con{D} = \infty\) if and only if \(\matmag{D} = 0\), in which case there exists \(\vec{w} \in \R^n\) such that \(D\vec{w} = \onevector\) and \(\onevector^\transpose \vec{w} = 0\).  Pick \(\vec{u} \in \measmassone(X)\).  Then for \(q \in \N\), we have \(\vec{u} + q \vec{w} \in \measmassone(X)\) and 
    \[I(\vec{u} + q \vec{w}) = \vec{u}^\transpose D \vec{u} + 2 q \vec{u}^\transpose D \vec{w} + q^2 \vec{w}^\transpose D\vec{w} = I(u) + 2q \to \infty\]
    as \(q \to \infty\). Thus, if \(\con{D} = \infty\) then \(M(X) =\infty\).
    If \(\con{D} \ne \infty\) then a gauging \(\vec{v}\) exists, and by \eqref{part:maximal-distance-energy-2} we have 
    \(M(X) = I(\vec{v}) = \vec{v}^\transpose D \vec{v} = \vec{v}^\transpose (\con{D} \onevector) = \con{D}\).
\end{proof}

As we have seen in \Cref{eg:five-points}, there are finite metric spaces of negative type for which the distance matrix has no gauging and hence has infinite concentration. For instance, this is the case for the space \(P_{\mathrm{C}} = P(2,4/3)\). However, by \Cref{thm:microwtg_consequences}, if a finite metric space admits a microscopic weighting \(\micro\), then its distance matrix does have a gauging, namely \(\micro\), and its concentration is the derivative at zero of the magnitude function. In particular, that holds for every space of strictly negative type by \Cref{thm:strict_negative_type_microwtg}. Combining those facts with \Cref{prop:maximal-distance-energy}, we arrive at the following statement.

\begin{theorem}
\label{thm:microwtg_energy_maximizing}
Let \(X\) be a finite metric space of negative type, and suppose \(X\) admits a microscopic weighting, \(\micro\). Then \(\micro\) maximizes the energy integral \(I\) on \(X\), and
\[M(X) = \mu'_X(0).\]
In particular, this holds for every finite metric space of strictly negative type. \qed
\end{theorem}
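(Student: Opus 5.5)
The theorem follows by combining three earlier results, so the plan is to chain them in order.

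First, I invoke \Cref{thm:microwtg_consequences}, part~\eqref{thm:microwtg_consequences_3}. Since \(X\) admits a microscopic weighting \(\micro\), that result says \(\micro\) is a gauging for the distance matrix \(D\), with concentration \(\mu_X'(0)\). In particular, \(D\) has a gauging, so \(\con{D}\) is finite and
\[\con{D} = \mu_X'(0).\]
This step is where the real analytic content lies: the derivative of the magnitude function at zero exists and is a finite number, obtained by differentiating the weight equation. That work has already been done.

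Second, because \(X\) is of negative type, I apply \Cref{prop:maximal-distance-energy}.
\begin{itemize}
\item Part~\eqref{part:maximal-distance-energy-2} says every gauging for \(D\) lying in \(\measmassone(X)\) is energy-maximizing. The vector \(\micro\) lies in \(\measmassone(X)\), since \(\onevector^\transpose\micro = 1\). Hence \(\micro\) maximizes \(I\).
\item Part~\eqref{part:maximal-distance-energy-3} gives \(M(X) = \con{D}\). Combined with the first step, this yields \(M(X) = \mu_X'(0)\).
\end{itemize}
The negative-type hypothesis is needed here. Without it, \Cref{thm:non-negative-type-unbound-energy} shows that \(M(X) = \infty\), while \(\mu_X'(0)\) would still be finite.

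Third, for the final sentence I use \Cref{thm:strict_negative_type_microwtg}. A finite space of strictly negative type is in particular of negative type, and that theorem says it admits a microscopic weighting. So the hypotheses of the present theorem hold, and the conclusion follows.

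I see no genuine obstacle. The only point needing care is that \(\mu_X'(0)\) is understood as \(\lim_{t\to 0}\mu_X'(t)\), which is how it appears in the proof of \Cref{thm:microwtg_consequences}. That limit agrees with the concentration furnished by the gauging, so the identification \(\con{D} = \mu_X'(0)\) is exactly what that theorem provides.
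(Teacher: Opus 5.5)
Your proposal is correct and follows the paper's own argument: Theorem~\ref{thm:microwtg_consequences} makes \(\micro\) a gauging for \(D\) with concentration \(\mu_X'(0)\), Proposition~\ref{prop:maximal-distance-energy} then gives that \(\micro\) is energy-maximizing and \(M(X) = \con{D}\), and Theorem~\ref{thm:strict_negative_type_microwtg} covers the strictly negative type case.
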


\begin{figure}
   \centering
    \raisebox{-.45\height}{\includegraphics[width=0.45\textwidth]{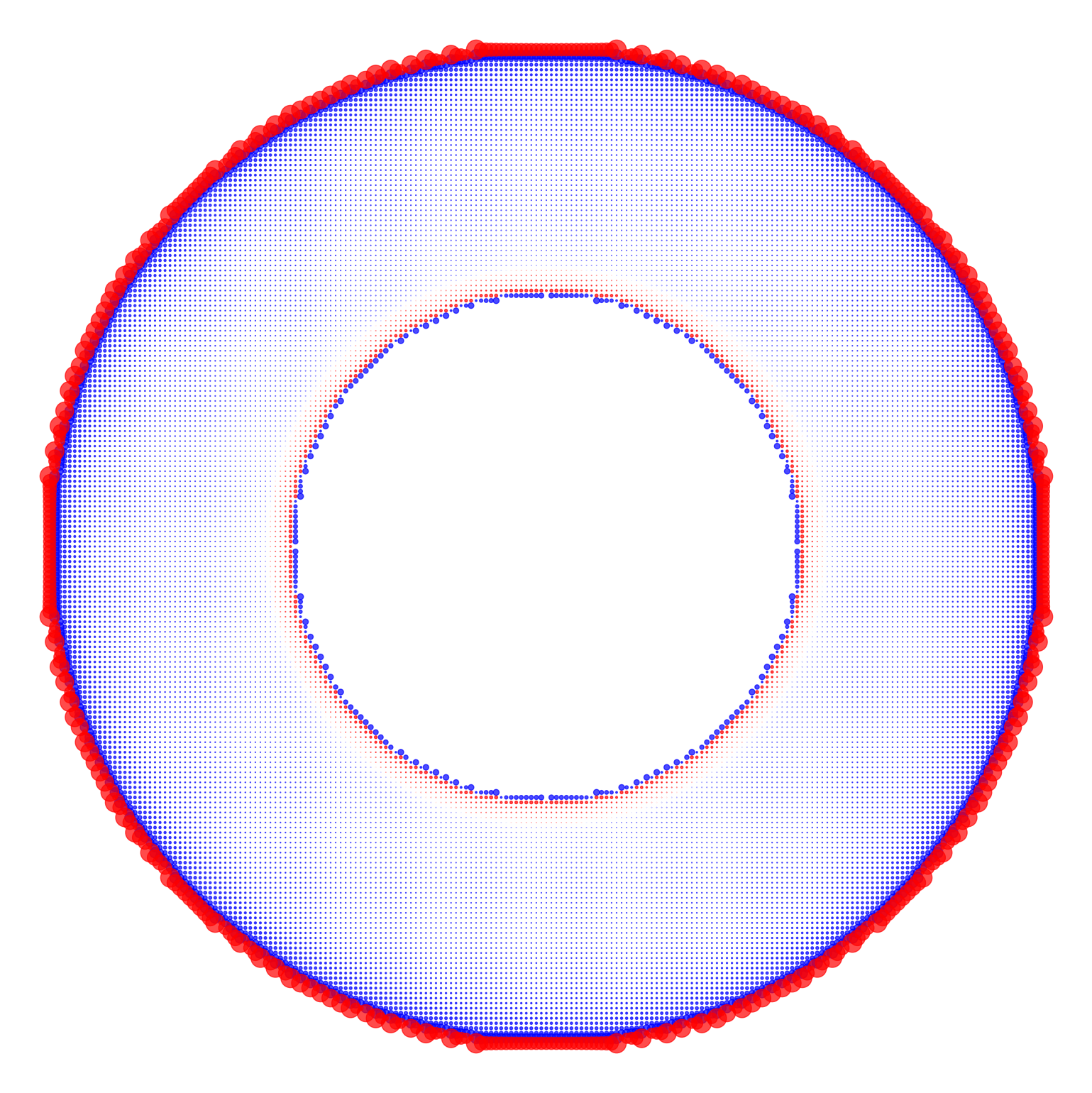}}
   \quad
   \raisebox{-.45\height}{
   \includegraphics[width=0.45\textwidth]{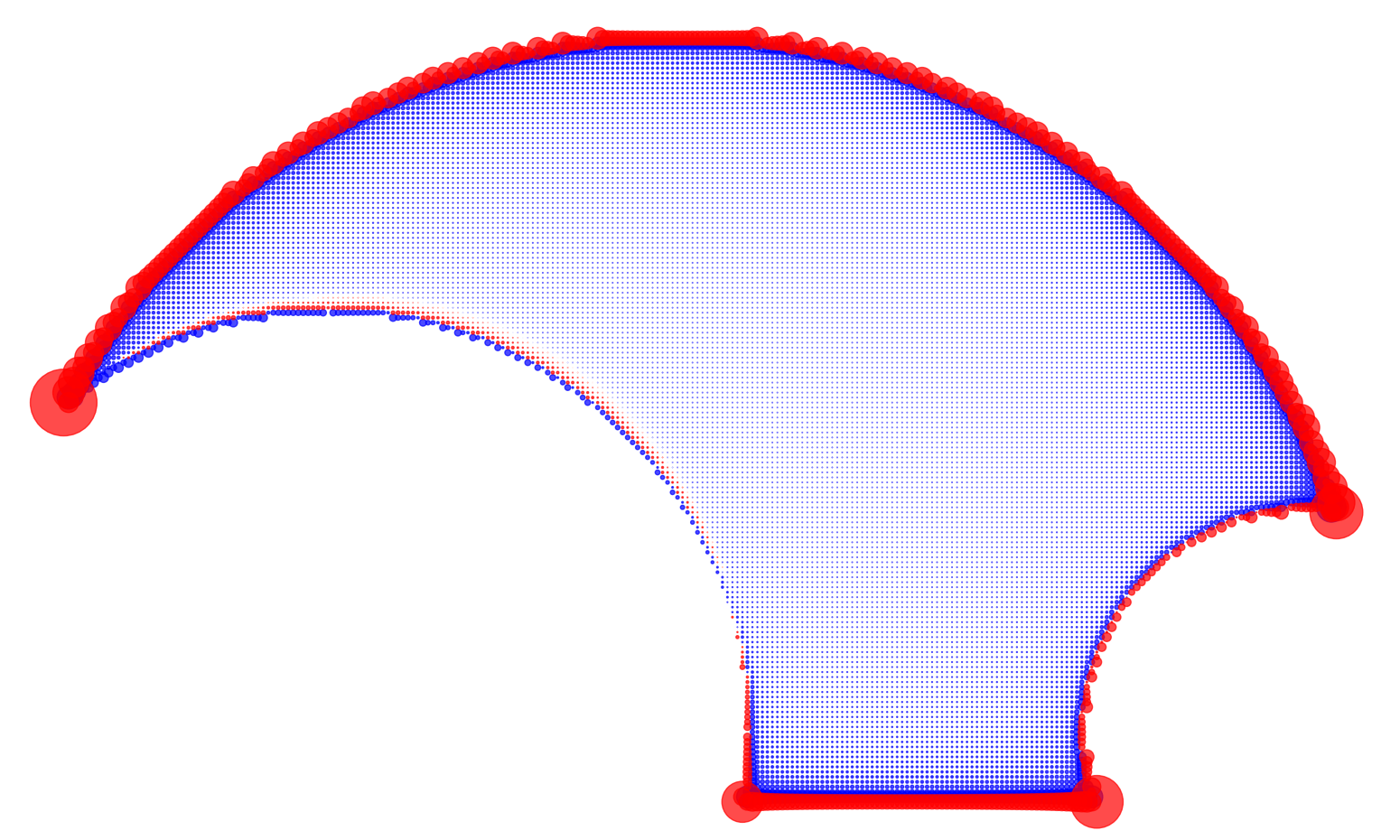}}
   \caption{The microscopic weightings on grid approximations to two regions in the plane. The area of the disc at each point is proportional to the weight at that point. Red represents a positive weight, blue a negative weight. Notice that the largest weights appear to be concentrated on the boundary.}
   \label{fig:microweightings-subsets-of-plane}
\end{figure}


\subsection{Microscopic weightings and discrete curvature}
\label{subsec:curvature}

Our third interpretation relates the microscopic weighting to a notion of discrete curvature for graphs, introduced recently by Devriendt and Lambiotte \cite{DevriendtLambiotte}. The definition depends on the notion of \emph{resistance distance}, which is a metric on the vertex set of a graph in which the distance between two vertices represents the effective resistance between two nodes in a corresponding electrical network (see Klein and Randi\'{c} \cite{KleinRandic1993}). How the resistance distance is actually defined will not be important here. What is relevant is that it is always of strictly negative type \cite[Proposition~4.9]{Devriendt-thesis}, so its distance matrix is invertible (\Cref{prop:strict-neg-type-iff-mag-det}), which allows Devriendt and Lambiotte to make the following definition.

\begin{definition}[{\cite[Definition~1 and Appendix~B2]{DevriendtLambiotte}}]
\label{def:resistance-curvature}
Let \(G\) be a graph with vertices \(v_1,\ldots, v_n\), and let \(\Omega\) be the distance matrix for the resistance distance on \(G\). Let \(\kappa \in \R^n\) be the unique vector that satisfies 
\begin{equation}
\label{eq:resistance-curvature}
\Omega \kappa = \frac{1}{\kappa^\transpose \Omega \kappa} \onevector.
\end{equation}
The \define{resistance curvature} at each vertex \(v_i\) of \(G\) is the number \(\kappa_i \in \R\).
\end{definition}

Devriendt and Lambiotte make the case that this is a reasonable notion of discrete curvature, which captures interesting information about the combinatorics of \(G\) and its geometry under the resistance distance. All that we will add is the observation that the microscopic weighting for the resistance distance on a graph is the vector that records its resistance curvature. Since `the magnitude of a graph' usually refers to its magnitude with respect to the shortest path metric, we emphasize that in the following statement, \(\vec{w}(t)\) is the unique weighting for the matrix \(\left(\exp(-t\Omega_{v_i,v_j})\right)_{i,j=1}^n\).

\begin{theorem}
\label{thm:microwtg-curvature}
Let \(G\) be any finite graph. If we equip \(G\) with the resistance distance, then it admits the microscopic weighting
\(\micro = \kappa\).
\end{theorem}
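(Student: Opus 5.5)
The plan is to reduce the statement to \Cref{thm:strict_negative_type_microwtg} and then match the resulting gauging with the defining equation \eqref{eq:resistance-curvature} for \(\kappa\). The resistance distance on \(G\) is of strictly negative type, so by \Cref{prop:strict-neg-type-iff-mag-det} its distance matrix \(\Omega\) is invertible and \(\con{\Omega} \neq \infty\). Assuming \(G\) has at least two vertices, \Cref{lemma:neg-type-mag-finite} also gives \(\con{\Omega} \in (0,\infty)\). \Cref{thm:strict_negative_type_microwtg} then says that \(G\) admits a microscopic weighting, namely
\[
\micro = \con{\Omega}\,\Omega^{-1}\onevector,
\]
and this is the unique gauging for \(\Omega\) (\Cref{thm:microwtg_D_invertible}).

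Next I show that \(\micro\) satisfies \eqref{eq:resistance-curvature}. Write \(c = \con{\Omega}\). Then \(\Omega\micro = c\onevector\) and \(\onevector^\transpose\micro = 1\), so
\[
\micro^\transpose\Omega\micro = c\,\onevector^\transpose\micro = c .
\]
Hence equation \eqref{eq:resistance-curvature} holds for \(\micro\) if and only if \(c = 1/c\), that is, \(c^2 = 1\). This cannot be true in general, since \(c\) scales with the metric. So I expect the intended normalization in \Cref{def:resistance-curvature} to be the one from Devriendt--Lambiotte's Appendix~B2, where \(\kappa\) is characterized by \(\Omega\kappa = (\text{const})\onevector\) together with \(\onevector^\transpose\kappa = 1\). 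In that case the constant is \(\kappa^\transpose\Omega\kappa\), not its reciprocal. Indeed, Devriendt--Lambiotte show that the resistance curvature sums to one, \(\sum_i \kappa_i = 1\).

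The main step, therefore, is to verify from the source that \(\kappa\) has total mass one. Given this, I would argue as follows. Let \(\kappa\) satisfy \(\Omega\kappa = \lambda\onevector\) with \(\onevector^\transpose\kappa = 1\). Then \(\kappa\) is a gauging for \(\Omega\), and by uniqueness of gaugings for an invertible distance matrix with finite concentration, \(\kappa = \micro\). If instead one insists on reading \eqref{eq:resistance-curvature} literally, I would first derive \(\onevector^\transpose\kappa = 1\) from it. Any solution satisfies \(\Omega\kappa = \lambda\onevector\) with \(\lambda = 1/(\kappa^\transpose\Omega\kappa) \neq 0\), so \(\kappa = \lambda\,\Omega^{-1}\onevector = (\lambda/c)\,\micro\). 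Substituting back yields a scalar equation for \(\lambda/c\), which fixes the normalization. The main obstacle is reconciling this normalization with the mass-one property. I would resolve it by appealing to the curvature's total-mass-one property (a discrete Gauss--Bonnet statement in Devriendt--Lambiotte), which singles out exactly the gauging \(\micro\).
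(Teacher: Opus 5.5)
Your main line is the paper's own argument. Strict negative type gives, via \Cref{thm:strict_negative_type_microwtg} and \Cref{thm:microwtg_D_invertible}, that \(\micro\) exists and is the unique gauging for \(\Omega\); it then remains to compare with \eqref{eq:resistance-curvature}. Your computation \(\micro^\transpose\Omega\micro = \con{\Omega}\) is correct. It exposes a slip in the paper's proof at precisely this step: the paper obtains \(1 = \frac{1}{\con{\Omega}}\micro^\transpose\Omega\micro\) and then writes ``hence \(\con{\Omega} = 1/\micro^\transpose\Omega\micro\)'', whereas the correct conclusion is \(\con{\Omega} = \micro^\transpose\Omega\micro\). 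Thus \(\micro\) satisfies \(\Omega\micro = (\micro^\transpose\Omega\micro)\onevector\), not \eqref{eq:resistance-curvature}, and with the definition read literally the theorem is false. Your scaling heuristic does not apply to an unweighted graph, whose resistance metric is fixed, so give an explicit counterexample instead. For \(K_2\) one has \(\Omega = \begin{pmatrix} 0 & 1 \\ 1 & 0 \end{pmatrix}\), \(\micro = (\tfrac12, \tfrac12)^\transpose\) and \(\con{\Omega} = \tfrac12\), while the unique solution of \eqref{eq:resistance-curvature} is \((2^{-1/3}, 2^{-1/3})^\transpose\).

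The genuine gap in your proposal is the last paragraph. From the literal equation you cannot derive \(\onevector^\transpose\kappa = 1\). With \(c = \con{\Omega}\), your substitution gives \(\lambda^3 = c\), so \(\kappa = c^{-2/3}\micro\), which has mass \(c^{-2/3}\). Nothing is left undetermined for a Gauss--Bonnet identity to fix. That identity, \(\sum_i p_i = 1\) (from Foster's theorem), is a property of Devriendt--Lambiotte's curvature \(p_i = 1 - \frac{1}{2}\sum_{j \sim i}\omega_{ij}\), not of the solution of the misquoted equation, so invoking it silently changes the definition.

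Make the change openly, as in your second paragraph: replace \eqref{eq:resistance-curvature} by \(\Omega\kappa = (\kappa^\transpose\Omega\kappa)\onevector\) with \(\kappa \neq 0\). Equivalent alternatives are \(\Omega\kappa = (\onevector^\transpose\Omega^{-1}\onevector)^{-1}\onevector\), or \(\Omega\kappa \in \R\onevector\) with \(\onevector^\transpose\kappa = 1\). Each of these makes \(\kappa\) a gauging. For the first, \(\lambda \coloneq \kappa^\transpose\Omega\kappa\) is non-zero because \(\Omega\) is invertible and \(\kappa \neq 0\), and \(\lambda = \kappa^\transpose(\lambda\onevector)\) gives \(\onevector^\transpose\kappa = 1\). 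Uniqueness of the gauging then yields \(\kappa = \micro\), completing your argument. If you start instead from the explicit formula for \(p\), mass one is not enough by itself. You also need \(\Omega p \in \R\onevector\), which follows from Fiedler's formula for the inverse of the bordered matrix \(N\) in \Cref{prop:det_nonzero_strictly_NT}.
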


\begin{proof}
As the resistance distance is of strictly negative type, \Cref{thm:strict_negative_type_microwtg} tells us that \(G\) admits a microscopic weighting, \(\micro\). By \Cref{thm:microwtg_consequences}, \(\micro\) is the unique gauging for \(\Omega\); to see that it satisfies \eqref{eq:resistance-curvature} we need to see that \(\con{\Omega} = 1/(\micro^\transpose \Omega \micro)\). Indeed, by definition of gaugings we have \(\con{\Omega} \onevector^\transpose = \micro^\transpose \Omega\) and \(\onevector^\transpose \micro = 1\), so \(1 = \frac{1}{\con{\Omega}} \micro^\transpose \Omega \micro\) and hence \(\con{\Omega} = 1/\micro^\transpose \Omega \micro\). Thus, by \Cref{def:resistance-curvature}, we have \(\kappa = \micro\).
\end{proof}

The examples in \Cref{fig:microweightings-subsets-of-plane} suggest that, in the Euclidean setting, any relationship between microscopic weightings and curvature will not be straightforward.


\section{Towards microscopic weightings on compact metric spaces}
\label{sec:infinite-spaces}

For a finite metric space \(X\) of strictly negative type, we have proved the following statements in \Cref{prop:maximal-distance-energy} and \Cref{thm:microwtg_energy_maximizing}. First, that the derivative at zero of the magnitude function is equal to the maximal energy of \(X\), as defined by Nickolas and Wolf:
\begin{equation}
\label{finite-statement1}
    \mu_X'(0)= M(X).
\end{equation}
Second, that \(X\) admits a microscopic weighting \(\micro\), which is a gauging for the distance matrix with concentration equal to the maximal energy:
\begin{equation}
\label{finite-statement2}
    \onevector^\transpose \micro = 1 \quad \text{ and } \quad D \micro = M(X) \onevector.
\end{equation}
Third, that \(\micro\) is an energy-maximizing measure:
\begin{equation}
\label{finite-statement3}
    I(\micro) = \sup_{\vec{v} \in \measmassone(X)} I(\vec{v}).
\end{equation}
It seems reasonable to wonder whether analogues of these statements can be proved for infinite spaces of strictly negative type---or perhaps, more specifically, for compact subsets of Euclidean space. This section explores that idea.

Statement \eqref{finite-statement1} is easy to translate into a conjecture about compact spaces of strictly negative type, because it is already known how to define both the magnitude function and the maximal energy in that generality. In \Cref{subsec:max-energy-derivative-infinite} we will observe that the equality \(\mu_X'(0)= M(X)\) holds non-trivially for at least the first twenty odd-dimensional balls in Euclidean space, and conjecture that it holds in general for compact metric spaces of strictly negative type.

It is less straightforward to write down a conjecture generalizing statements \eqref{finite-statement2} and \eqref{finite-statement3}, because the notion of weighting is more subtle for compact spaces than for finite spaces, which makes it more difficult to define microscopic weightings in that generality. For subsets of Euclidean space, however, we can attempt it. In \Cref{sec:microwtg-infinite} we propose a definition of microscopic weighting, using Meckes' theorem that every subset of Euclidean space carries a unique weighting, which in general is not a measure but a {distribution} in the sense of Schwartz.

In \Cref{sec:three-ball} we study in more detail the example of \(B^3\), the three-dimensional ball in Euclidean space. We show that \(B^3\) admits a microscopic weighting, and that an analogue of statement \eqref{finite-statement2} holds in this case. Though it is known that there exists no energy-maximizing measure on \(B^3\), we suggest that the microscopic weighting may be interpretable as the limit of a energy-maximizing family of measures---hence, that it may deserve to be understood as an \emph{energy-maximizing distribution} on \(B^3\).


\subsection{Maximal energy and the derivative of the magnitude function}
\label{subsec:max-energy-derivative-infinite}

If \(X\) is a finite metric space of negative type, so are all its subsets; moreover, one has \(\mu_V(t) \leq \mu_X(t)\) for every subset \(V \subseteq X\) and every \(t > 0\) \cite[Corollary~2.4.4]{LeinsterMagnitude2013}. This guarantees that the magnitude function of \(X\) satisfies
\[\mu_X(t) = \sup\{\mu_V(t) \mid V \subset X\}\]
for every \(t \in (0, \infty)\). Magnitude can therefore be extended from finite to compact spaces of negative type by taking a supremum over finite subsets.

\begin{definition}
\label{def:mag-infinite}
Let \(Y\) be a compact metric space of negative type.
The \define{magnitude function} of \(Y\) is the function \(\mu_Y \colon (0, \infty) \to \R \cup \{\infty\}\) defined by
\[\mu_Y(t) = \sup \{\mu_X(t) \mid X \subset Y \text{ is finite}\}.\]
\end{definition}

There are not many compact metric spaces whose magnitude functions are known explicitly, but the unit balls in each odd-dimensional Euclidean space are among them. We will denote the unit ball in dimension \(m \geq 1\) by \(B^m\). Barcel\'o and Carbery proved in~\cite[Theorem~4]{BarceloCarbery:magnitude-of-compact-sets} that \(\mu_{B^{2p+1}}\) is a rational function, and Willerton~\cite{Willerton:odd-balls} gave an explicit formula for it, together with SageMath code for doing computations. Using that code one can verify numerically, for small values of \(m\), a formula for the derivative of the magnitude function at zero:
\[
    \mu_{B^{2p+1}}'(0) 
    = 
    \prod_{i=1}^{p}\frac{2i}{2i-1}
    \quad
    \text{for }
    p = 0, 1, 2,  \dots, 20.
\]

The maximal energy of \(B^{2p+1}\) is also known. Indeed, following work of Alexander~\cite{Alexander:two-problems} on the three-dimensional unit ball, Hinrichs, Nickolas and Wolf calculated the maximal distance energy for the unit ball in any finite-dimensional Euclidean space. For an odd-dimensional ball they proved~\cite[Theorem~2.1]{HinrichsNickolasWolf:metric-geometry-of-unit-ball} that
\[
    M(B^{2p+1}) = \prod_{i=1}^{p}\frac{2i}{2i-1}
    \quad
    \text{for }
    p = 0, 1, 2, \dots.
\] 
Thus, for the first 21 values of \(p\), the derivative at zero of the magnitude function of \(B^{2p+1}\) coincides with the maximal energy. This encourages us to make the following conjecture.

\begin{conj}
    For every compact metric space \(Y\) of strictly negative type, we have
    \[
        \mu_Y'(0)
        =
        M(Y).
    \]
\end{conj}

To have a hope of proving the conjecture by methods extending those in this paper, one would want to understand microscopic weightings, gaugings and concentration in the context of compact spaces. We will take a first step in that direction by defining microscopic weightings for compact subsets of Euclidean space.


\subsection{Defining microscopic weightings for compact subsets of Euclidean space}
\label{sec:microwtg-infinite}

First, let us return temporarily to the finite setting, where we can rephrase the definition of the magnitude function as follows. Given a finite metric space \(X\) with points \(x_1,\ldots, x_n\), we will continue to think of \(\R^n\) as the space of finite, signed measures on \(X\). For each \(t \in (0, \infty)\), the similarity matrix \(Z(t)\) defines a symmetric bilinear form \(\Zpair{-}{-}_t\) on \(\R^n\) by
\begin{equation}\label{Zform_finite}
\Zpair{\vec{v}}{\vec{v}'}_t = \vec{v}^\transpose Z(t) \vec{v}'.
\end{equation}
If \(X\) is of negative type, the matrix \(Z(t)\) is positive definite for every \(t\) \cite[Theorem~3.3]{MeckesPositive2013}. In that case \(\Zpair{-}{-}_t\) is an inner product and induces a norm, \(\|-\|_t\). If \(\vec{w}(t) \in \R^n\) is a weighting for \(Z(t)\), we can write
\[\mu_X(t) = \matmag{Z(t)} = \onevector^\transpose \vec{w}(t) =  (Z(t) \vec{w}^\transpose) \vec{w}(t) = \vec{w}(t)^\transpose Z(t) \vec{w}(t) = \Zpair{\vec{w}(t)}{\vec{w}(t)}_t,\]
or, if \(X\) of negative type, \(\mu_X(t) = \|\vec{w}(t)\|_t^2\).

Meckes observed that this approach can be used to extend the notion of weightings beyond finite spaces \cite{Meckes:magnitude-diversity-capacity-dimension}. Given \emph{any} metric space \(U\), the expression in (\ref{Zform_finite}) can be adapted to define a symmetric bilinear form on the space \(\finsuppmeas(U)\) of {finitely supported}, finite, signed measures on \(U\). For each \(t \in (0, \infty)\) and \(\nu, \nu' \in \finsuppmeas(U)\), let
\begin{equation}
\label{wtg_form_real}
\Zpair{\nu}{\nu'}_t = \iint_{u,u' \in U} e^{-td(u,u')} \dd\nu(u) \dd\nu'(u').
\end{equation}
If \(U\) is of negative type then for every \(t\) the symmetric bilinear form defined in this way is positive definite, so it makes \(\finsuppmeas(U)\) into an inner product space \cite[\S3]{Meckes:magnitude-diversity-capacity-dimension}. Meanwhile, for each \(\nu \in \finsuppmeas(U)\), the function \(Z_t\nu \colon U \to \R\) defined by
\[Z_t\nu(u) = \int_{u' \in U} e^{-td(u,u')} \dd \nu(u')\]
belongs to a certain Banach space \(\tilde{C}(U)\) of functions, and the map \(Z_t \colon \finsuppmeas(U) \to \tilde{C}(U)\) is an injective, bounded, linear operator. It therefore extends uniquely to an injective linear operator on the completion of \(\finsuppmeas(U)\) \cite[Proposition~3.2]{Meckes:magnitude-diversity-capacity-dimension}. Thus, one can make the following definition.

\begin{definition}[{\cite[Definition~3.3]{Meckes:magnitude-diversity-capacity-dimension}}]
\label{def:weighting-space}
Let \(U\) be any metric space of negative type. The \textbf{weighting space} of \(U\) at scale \(t \in (0, \infty)\) is the completion \(\wtspace_t(U)\) of \(\finsuppmeas(U)\) with respect to the inner product \(\Zpair{-}{-}_t\) defined in \eqref{wtg_form_real}. A \textbf{weighting} for \(U\) at scale \(t\) is an element \(w_t \in \wtspace_t(U)\) such that \(Z_tw_t(u) = 1\) for each \(u \in U\).
\end{definition}

Note that if a weighting exists at a given scale \(t\), it is necessarily unique: this follows from the injectivity of \(Z_t\). By definition, a weighting \(w_t \in \wtspace_t(U)\) consists of a sequence of finitely supported measures on \(U\). We will also write \(\wtspace(U)\) to denote \(\wtspace_1(U)\). 

Given a compact space \(Y\) of negative type, it makes sense to ask whether the magnitude function \(\mu_Y \colon (0, \infty) \to \R \cup \{\infty\}\), defined as in \Cref{def:mag-infinite}, can be computed using weightings, as in the finite case. Meckes proves that it can be.

\begin{theorem}[{\cite[Theorem~3.4]{Meckes:magnitude-diversity-capacity-dimension}}]
\label{wtg_mag}
Let \(Y\) be a compact metric space of negative type. If \(Y\) has a weighting at scale \(t\), say \(w_t\), then \(\mu_Y(t) = \Zpair{w_t}{w_t}_t\).
\end{theorem}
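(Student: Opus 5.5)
The plan is to reduce to finite subsets, where the identity \(\mu_X(t) = \Zpair{\vec{w}(t)}{\vec{w}(t)}_t\) was already checked above, and then pass to a limit using the Hilbert-space structure of \(\wtspace_t(Y)\). Fix \(t\) and write \(\langle\!\langle -,- \rangle\!\rangle\) for \(\Zpair{-}{-}_t\) and \(\|-\|\) for its norm. Since \(Y\) has negative type, so does every finite \(X \subset Y\), so \(Z_X(t)\) is positive definite. Hence \(X\) has a unique weighting \(w_X \in \finsuppmeas(Y)\) (supported on \(X\)), and \(\mu_X(t) = \|w_X\|^2\).

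The key variational fact is the following characterization of \(\|w_X\|^2\). For \(\nu \in \finsuppmeas(Y)\) supported on \(X\), the total mass is \(\nu(X) = \langle\!\langle w_X, \nu\rangle\!\rangle\), because \(Z_t w_X = 1\) on \(X\). By Cauchy--Schwarz, this gives
\[\mu_X(t) = \sup\bigl\{\nu(X)^2/\|\nu\|^2 : 0 \neq \nu \text{ supported on } X\bigr\}.\]
The global weighting satisfies the same kind of identity. Since \(Z_t w_t = 1\) on all of \(Y\), for every \(\nu \in \finsuppmeas(Y)\) we have \(\langle\!\langle w_t, \nu\rangle\!\rangle = \int Z_t w_t \,\dd\nu = \nu(Y)\). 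Making this rigorous is the main obstacle. One must show that the pairing of \(w_t \in \wtspace_t(Y)\) with a finitely supported measure \(\nu\) equals the integral of the function \(Z_t w_t \in \tilde C(Y)\) against \(\nu\). I would prove this first for \(w_t \in \finsuppmeas(Y)\), where it is just the definition, and then extend by density. That extension needs evaluation at a point, \(\mu \mapsto Z_t\mu(u) = \langle\!\langle \mu, \delta_u\rangle\!\rangle\), to be continuous on \(\wtspace_t\). It is, since this map is the inner product with \(\delta_u\), and this is exactly the continuity underlying Meckes' extension of \(Z_t\).

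Granting this, the two inequalities follow. For the upper bound, take any finite \(X\). Then \(w_X(Y) = \langle\!\langle w_t, w_X\rangle\!\rangle\) with \(w_X(Y) = \mu_X(t) = \|w_X\|^2\). Cauchy--Schwarz gives \(\|w_X\|^2 \le \|w_t\|\,\|w_X\|\), so \(\mu_X(t) \le \|w_t\|^2\). Taking the supremum over \(X\) yields \(\mu_Y(t) \le \|w_t\|^2\). For the lower bound, choose \(\nu_k \in \finsuppmeas(Y)\) with \(\nu_k \to w_t\) in \(\wtspace_t(Y)\), and let \(X_k = \operatorname{supp}\nu_k\). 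Then \(\nu_k(Y) = \langle\!\langle w_t,\nu_k\rangle\!\rangle \to \|w_t\|^2\) and \(\|\nu_k\|^2 \to \|w_t\|^2\). By the variational formula, \(\mu_{X_k}(t) \ge \nu_k(Y)^2/\|\nu_k\|^2 \to \|w_t\|^2\). If \(w_t = 0\) there is nothing to prove; but in fact \(w_t \neq 0\), since \(Z_t w_t = 1\). Hence \(\mu_Y(t) \ge \|w_t\|^2\), and combining the two bounds gives \(\mu_Y(t) = \Zpair{w_t}{w_t}_t\).
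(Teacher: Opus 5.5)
Your proof is correct. The identity \(\Zpair{w_t}{\nu}_t = \nu(Y)\) for \(\nu \in \finsuppmeas(Y)\) holds by density, since \(w \mapsto \Zpair{w}{\delta_u}_t\) is continuous and agrees with \(w \mapsto Z_t w(u)\). Cauchy--Schwarz then gives \(\mu_Y(t) \le \|w_t\|_t^2\), and approximating \(w_t\) by finitely supported measures, together with the finite variational formula, gives the reverse inequality.

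The paper gives no proof of its own; it simply quotes Meckes. Yours is the standard argument for that result: \(\mu_Y(t)\) is the squared norm of the total-mass functional on the dense subspace \(\finsuppmeas(Y) \subset \wtspace_t(Y)\). That functional is represented by \(w_t\), so its squared norm is \(\Zpair{w_t}{w_t}_t\).
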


The theorem confirms that Meckes' notion of weighting is a good one; however, \Cref{def:weighting-space} is not terribly explicit about what type of thing a weighting can be. In the case of a compact subset \(Y\) of Euclidean space, it turns out that more can be said: \(\wtspace_t(Y)\) can be regarded as a space of \emph{distributions} on \(Y\).

Let us recall very briefly that a distribution on an arbitrary subset \(U \subseteq \R^m\) is an element of the topological linear dual \(\mathcal{D}'(U)\) to some topological vector space \(\mathcal{D}(U)\) of functions from \(U\) to \(\R\). That is, a distribution on \(U\) is a linear functional \(u \colon \mathcal{D}(U) \to \R\) with a certain continuity property that we will not specify here. For a textbook introduction to the theory, see Friedlander and Joshi~\cite{Friedlander}. One often writes \(\dispair{u}{f}\) for \(u(f)\). In particular, suitable measures on \(U\) will give rise to distributions via \(\dispair{\nu}{f} = \int_U f \dd \nu\), but not every distribution on \(U\) can be represented in this way.

Meckes identifies the weighting space \(\wtspace(\R^m)\) with the \emph{Bessel potential space} \(\besselspace^{-(m+1)/2}\), whose elements are a certain class of distributions on \(\R^m\) \cite[Proposition~5.1]{Meckes:magnitude-diversity-capacity-dimension}. As Meckes explains, for \(\alpha \geq 0\), the Bessel potential space \(\besselspace^{\alpha}\) is actually a space of functions and the space of distributions \(\besselspace^{-\alpha}\) can be identified with the topological linear dual of \(\besselspace^{\alpha}\). Now, suppose \(Y \subset \R^m\) is compact. Then the weighting space \(\wtspace(Y)\) is the closure of the finitely supported measures \(\finsuppmeas(Y)\) inside \(\wtspace(\R^m)\), so we have \(\wtspace(Y) \subset \wtspace(\R^m) \cong \besselspace^{-(m+1)/2}\), which says that any weighting \(w\) for \(Y\) can be thought of as a distribution. In fact, Meckes has proved that in this setting a weighting will always exist; by the observation following \Cref{def:weighting-space}, it is unique.

\begin{theorem}[{\cite[Corollary~5.3]{Meckes:magnitude-diversity-capacity-dimension}}]
\label{thm:compact-Y-has-weighting}
Let \(Y \subset \R^m\) be compact. Then \(Y\) has a unique weighting, which is an element of \(\besselspace^{-(m+1)/2}\).
\end{theorem}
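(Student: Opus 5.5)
This is a cited theorem of Meckes, so the plan is to reconstruct his argument from the identification \(\wtspace(\R^m) \cong \besselspace^{-(m+1)/2}\) already quoted. The inner product \(\Zpair{-}{-}_1\) is given by convolution with the kernel \(e^{-\|x\|}\). Its Fourier transform is \(c_m(1+|\xi|^2)^{-(m+1)/2}\). Hence for finitely supported measures \(\nu\),
\[
\Zpair{\nu}{\nu}_1 = c_m \int_{\R^m} (1+|\xi|^2)^{-(m+1)/2} |\hat\nu(\xi)|^2 \, \dd\xi,
\]
which is, up to the constant \(c_m\), the squared \(\besselspace^{-(m+1)/2}\)-norm. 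So \(\wtspace(Y)\) is the closure of \(\finsuppmeas(Y)\) in \(\besselspace^{-(m+1)/2}\). By Hahn--Banach this closure is exactly the set of distributions in \(\besselspace^{-(m+1)/2}\) supported in \(Y\), since smooth compactly supported distributions near \(Y\) are approximated by point masses. Uniqueness follows at once from the injectivity of \(Z_1\), as noted after \Cref{def:weighting-space}. It therefore remains to prove existence.

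For existence I would use a Riesz representation argument, in the spirit of the proof of \Cref{thm:mag_val_reciprocal}. Choose a function \(h \in \besselspace^{(m+1)/2}(\R^m)\) with \(h \equiv 1\) on a neighbourhood of \(Y\); a smooth bump works. Under the duality between \(\besselspace^{(m+1)/2}\) and \(\besselspace^{-(m+1)/2}\), the operator \(Z_1\) is, up to a constant, the Riesz isomorphism \(\besselspace^{-(m+1)/2} \to \besselspace^{(m+1)/2}\). Concretely, \(\Zpair{\nu}{\mu}_1 = \dispair{\mu}{Z_1\nu}\).

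The linear functional \(\nu \mapsto \dispair{\nu}{h}\) on \(\wtspace(Y)\) is bounded. Hence there is a unique \(w \in \wtspace(Y)\) with \(\Zpair{w}{\nu}_1 = \dispair{\nu}{h}\) for all \(\nu \in \wtspace(Y)\). Taking \(\nu = \delta_y\) for \(y \in Y\) gives \(Z_1 w(y) = h(y) = 1\). Here \(\delta_y \in \finsuppmeas(Y)\), and evaluation at \(y\) is legitimate because \(Z_1 w\) lies in \(\besselspace^{(m+1)/2}\), which embeds into the continuous functions since \((m+1)/2 > m/2\). Thus \(w\) is a weighting; equivalently, it is the orthogonal projection of \(Z_1^{-1}h\) onto \(\wtspace(Y)\).

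The main obstacle is making this rigorous: verifying the Fourier transform of \(e^{-\|x\|}\), and checking that Meckes' extension of \(Z_1\) to the completion agrees with the Bessel-potential operator. After that, the Sobolev embedding into continuous functions is what lets the pointwise condition \(Z_1 w(y) = 1\) follow from testing against point masses.
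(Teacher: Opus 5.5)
The paper does not prove this statement. It quotes Meckes for existence and observes, as you do, that uniqueness follows from injectivity of \(Z_t\). Your reconstruction is correct in its essential steps:
\begin{itemize}
\item The Fourier identity makes \(\finsuppmeas(Y)\hookrightarrow\besselspace^{-(m+1)/2}\) an isometry up to a constant. Point masses lie in \(\besselspace^{-(m+1)/2}\) because \(m+1>m\), so \(\wtspace(Y)\) is the closure of \(\finsuppmeas(Y)\) there.
\item A bump function \(h\) makes \(\nu\mapsto\dispair{\nu}{h}\) bounded.
\item The Riesz vector \(w\) then satisfies \(Z_1w\equiv 1\) on \(Y\) and lies in \(\besselspace^{-(m+1)/2}\).
\end{itemize}

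The step you flag as the main obstacle is lighter than you suggest. By Cauchy--Schwarz, and since \(\Zpair{\delta_y}{\delta_y}_1=1\), we have \(|Z_1\nu(y)|=|\Zpair{\nu}{\delta_y}_1|\le\|\nu\|_1\) for \(\nu\in\finsuppmeas(Y)\), where \(\|\cdot\|_1\) is the norm of \(\Zpair{-}{-}_1\). So Meckes' extension automatically satisfies \(Z_1w(y)=\Zpair{w}{\delta_y}_1\) for all \(w\in\wtspace(Y)\) by continuity. You therefore need neither the identification of \(Z_1\) with the Bessel operator nor the Sobolev embedding. The only Euclidean inputs are the Fourier transform of \(e^{-\|x\|}\), which is the standard Poisson-kernel computation, and the existence of \(h\). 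Together they give \(|\nu(Y)|\le C\|\nu\|_1\) on \(\finsuppmeas(Y)\), which is exactly finiteness of \(\mu_Y(1)\). That finiteness is what existence really rests on.

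One sentence of yours is false and should be deleted; nothing else depends on it. The closure of \(\finsuppmeas(Y)\) in \(\besselspace^{-(m+1)/2}\) is not, in general, the space of all distributions in \(\besselspace^{-(m+1)/2}\) supported in \(Y\). By Hahn--Banach, equality would require every \(g\in\besselspace^{(m+1)/2}\) with \(g|_Y=0\) to be annihilated by every such distribution. That is a spectral-synthesis property, and it fails for thin sets.

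Here is a counterexample. Let \(Y\) be a closed flat disc in the plane \(x_3=0\) in \(\R^3\), and let \(u(f)=\int_Y\partial_3 f\,\dd A\).
\begin{itemize}
\item The trace theorem applied to \(\partial_3 f\in\besselspace^{1}\) gives \(u\in\besselspace^{-2}\), and clearly \(\mathrm{supp}\,u\subseteq Y\).
\item Take \(g=x_3\phi\), with \(\phi\) a bump equal to \(1\) near \(Y\). Then \(g\) vanishes on \(Y\), yet \(u(g)=\mathrm{area}(Y)\neq 0\).
\end{itemize}
Your justification, approximating smooth functions near \(Y\) by point masses, does not address distributions supported on \(Y\) itself.
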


We can use this to define microscopic weightings for compact subsets of Euclidean space. For every compact subset \(Y \subset \R^m\) and every \(t > 0\), the scaled space \(tY\) 
is isometric to the space \(Y_t \coloneq \{ t \vec{y} \mid \vec{y} \in Y\}\) regarded as a compact subset of \(\R^m\). Thus, Meckes' results imply that \(tY\) carries a unique weighting, which can be identified with an element of \(\besselspace^{-(m+1)/2}\). This allows us to propose the following definition.

\begin{definition}
\label{def:microwtg-compact}
Let \(Y \subset \R^m\) be compact, and for each \(t > 0\) let \(w_t \in \besselspace^{-(m+1)/2}\) be the unique weighting on the scaled space \(tY\). A \define{microscopic weighting} for \(Y\) is a distribution \(\widehat{w} \in \besselspace^{-(m+1)/2}\) with the property that
\begin{equation}
\label{eq:microwtg-compact}
\dispair{\widehat{w}}{f} = \lim_{t \to 0} \dispair{w_t}{f}
\end{equation}
for every function \(f \in \besselspace^{(m+1)/2}\). That is, \(\widehat{w} \coloneq \lim_{t \to 0} w_t\) in the \emph{weak-* topology} on \(\besselspace^{-(m+1)/2}\).
\end{definition}


\subsection{The three-dimensional ball}
\label{sec:three-ball}

We focus now on the example of \(B^3\), the three-dimensional ball of radius one, seen as a subspace of \(\R^3\). Alexander~\cite{Alexander:two-problems} showed that no energy-maximizing measure exists on \(B^3\): there is no signed measure \(\nu\) with \(\int_{B^3} \dd\nu =1\) such that \(I(\nu)= M(B^3)\). However, he was able to construct a sequence of measures \(\nu_1, \nu_2, \dots\) with \(I(\nu_q) \to 2 = M(B^3)\) as \(q \to \infty\). If we denote by \(\bar{s}_\rho\) the mass-one uniform measure on the radius \(\rho\) two-sphere \(S^2_\rho \subset B^3\), Alexander's energy-maximizing family \((\nu_q)\) is given by
\begin{equation}
\label{eq:Alexander-family}
    \nu_q \coloneq q \bar{s}_1 + (1 - q) \bar{s}_{1-1/q}
    \quad
    \text{for }
    q = 1, 2, \dots.
\end{equation}
We will return to this energy-maximizing family of measures below and interpret its limit as a distribution: the microscopic weighting. For, although it has no energy-maximizing measure, \(B^3\) does have a microscopic weighting, which we can compute.

As we are considering \(B^3\) as a subspace of \(\R^3\), the microscopic weighting belongs to the Bessel potential space \(\besselspace^{-2}\), which is dual to \(\besselspace^{2}\). It will be useful to describe \(\besselspace^{2}\) more concretely: it is the set of square-integrable functions
\[
    \Bigl\{
        f \in L^2(\R^3) 
        \Bigm|
        \tfrac{\partial_1^{\alpha_1}}{\partial x_1^{\alpha_1}} 
        \tfrac{\partial_2^{\alpha_2}}{\partial x_2^{\alpha_2}} 
        \tfrac{\partial_3^{\alpha_3}}{\partial x_3^{\alpha_3}} 
        f \in L^2(\R^3) 
        \text{ for all } 
        (\alpha_1,\alpha_2,\alpha_3) \in \mathbb{N}^3 
        \text{ with } 
        \textstyle\sum\limits_i \alpha_i \leq 2 
    \Bigr\}
    .
\]
Willerton~\cite{Willerton:odd-balls} gave a method to compute the weighting for odd-dimensional balls of arbitrary radius, and we can use that method to obtain the unique weighting for the radius \(t\) three-ball \(B^3_t \subset \R^3\). The space that really interests us is not \(B^3_t\) but \(t B^3\), the unit three-ball with the metric scaled by a factor of \(t\). However, we have the isometry \(B^3_t \to t B^3\) given by \(\vec{b} \mapsto (1/t)\vec{b}\), and using that isometry, along with the results in \cite{Willerton:odd-balls}, one arrives at the following expression.

\begin{lemma}
On \(t B^3\), the unique weighting \(w_t \in \besselspace^{-2}\) is given on a function \(f \in \besselspace^2\) by
\begin{multline*}
    \dispair{w_t}{f}
    =
    \tfrac{1}{3!} t^3 \int_{B^3} f(\vec{b}) \,\dd \bar{b}
    +
    (t^2 + 2t + 1)
    \int_{S^2} f(\vec{s}) \,\dd \bar{s}
    +
    (\tfrac{1}{2}t + 1)
    \int_{S^2} \tfrac{\partial}{\partial \vec{n}} f(\vec{s}) \,\dd \bar{s}
    .
\end{multline*}
Here, \(\bar{b}\) and \(\bar{s}\) denote the Lebesgue measure restricted to the unit three-ball \(B^3\) and the unit two-sphere \(S^2\) respectively, and normalized so that \(B^3\) and \(S^2\) each have total mass 1. The notation \(\frac{\partial}{\partial \vec{n}} f\) means the normal derivative of \(f\), pointing out of \(B^3\). \qed
\end{lemma}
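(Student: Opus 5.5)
The plan is to verify directly that the displayed functional is the Meckes weighting on \(tB^3\), and then invoke uniqueness. Three things must be checked: that \(w_t\) lies in \(\wtspace(tB^3)\subset\besselspace^{-2}\), that it satisfies the weight equation at every point, and that no other weighting exists.

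\emph{Step 1: \(w_t\) is a distribution in \(\besselspace^{-2}\).} Each of the three terms is a continuous functional on \(\besselspace^2(\R^3)\). The ball term is bounded by \(\|f\|_{L^2}\). The sphere term is bounded by the trace theorem, since \(\besselspace^2\to\besselspace^{3/2}(S^2)\) is continuous. The normal-derivative term is bounded because \(\nabla f\in\besselspace^1\) has an \(L^2\) trace on \(S^2\). All three terms are supported on \(B^3\). I will then argue that such a distribution lies in the closure of \(\finsuppmeas(B^3)\). The ball and sphere terms are limits of finitely supported measures by Riemann-sum approximation. The normal-derivative term is the limit of difference quotients \(q(\bar s_1-\bar s_{1-1/q})\), which are measures supported in \(B^3\); this is exactly Alexander's construction \eqref{eq:Alexander-family}. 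Convergence in \(\besselspace^{-2}\) follows from the uniform bound on second derivatives of test functions.

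\emph{Step 2: the weight equation.} Under the identification of \(tB^3\) with the unit ball carrying the metric \(t\|\cdot\|\), I must show that for every \(\vec x\in B^3\), with \(r=\|\vec x\|\) and \(f_{\vec x}(\vec b)=e^{-t\|\vec x-\vec b\|}\), we have \(\dispair{w_t}{f_{\vec x}}=1\). By rotational symmetry everything reduces to one-variable integrals. The classical shell formula gives, for \(0<r<R\),
\[
  \int_{S^2_R} g(\|\vec x-\vec s\|)\,\dd\bar s_R = \frac{1}{2rR}\int_{R-r}^{R+r}\rho\,g(\rho)\,\dd\rho .
\]
\begin{itemize}
\item The sphere term is this with \(R=1\).
\item The normal-derivative term is \(\partial_R\) of the same expression at \(R=1\), since \(\int_{S^2}\partial_{\vec n}f\,\dd\bar s=\partial_R\int_{S^2_R}f(R\,\vec s/\|\vec s\|)\,\dd\bar s\,\big|_{R=1}\).
\item The ball term is \(3\int_0^1 R^2(\cdots)\,\dd R\), splitting the radial integral at \(R=r\) and using the swapped formula for \(R<r\).
\end{itemize}
With \(g(\rho)=e^{-t\rho}\), each piece is an explicit combination of \(e^{-t(1\pm r)}\), powers of \(r\) and \(t\), and \(1/r\). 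I would then check that all exponential terms cancel and the constant term equals \(1\). The polynomial coefficients \(\tfrac16t^3\), \((1+t)^2\) and \(1+\tfrac t2\) are precisely what force this cancellation. The case \(r=0\) follows by continuity.

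As a cross-check, and as a shortcut I might use instead, I would derive the same coefficients from Willerton's formula for the radius-\(t\) ball \(B^3_t\) by pushing forward along the isometry \(\vec b\mapsto\vec b/t\). Under this map the normalized volume and surface measures are preserved, while a normal derivative on the radius-\(t\) sphere becomes \(1/t\) times the normal derivative on the unit sphere. So one only needs to rescale Willerton's coefficients, which come with factors of \(t^3\), \(t^2\) and so on from the unnormalized volume and area, and confirm they match.

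\emph{Step 3: uniqueness.} This follows from the injectivity of \(Z_t\) noted after \Cref{def:weighting-space}, together with \Cref{thm:compact-Y-has-weighting}.

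I expect the main obstacle to be the normalization bookkeeping in Step 2 (or in the rescaling cross-check). Specifically, the normal-derivative term must be handled with the correct sign (outward) and the correct power of \(t\), and the cancellation of the \(e^{-t(1-r)}\) and \(e^{-t(1+r)}\) terms must be verified, which is where an error in any coefficient would show up.
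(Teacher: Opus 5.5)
Your proposal is correct, but its main line differs from the paper's. The paper gives no independent verification. It takes Willerton's explicit weighting on the radius-$t$ ball $B^3_t\subset\R^3$ and transports it along the isometry $\vec b\mapsto\vec b/t$; this is exactly your ``cross-check'', including the factor $1/t$ on the normal-derivative term. That route is short, but it outsources two things to \cite{Willerton:odd-balls} and the Barcel\'o--Carbery/Meckes framework behind it: the weight equation, and the fact that the distribution lies in Meckes' weighting space. Your direct route is self-contained and shows why the coefficients $\tfrac16t^3$, $(1+t)^2$ and $1+\tfrac12t$ are forced. The Step~2 computation does close up. For $0<r<1$ the coefficients of $e^{-t(1-r)}$ and $e^{-t(1+r)}$ vanish identically and the constant term is exactly $1$. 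At $r=0$, for instance, it reads $1-e^{-t}(\tfrac12t^2+t+1)+(1+t)^2e^{-t}-(1+\tfrac12t)\,te^{-t}=1$.

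Two points need tightening.

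\emph{Step~1.} Functions in $\besselspace^2(\R^3)$ do not have bounded second derivatives; their gradients need not even be continuous. So the Taylor estimate only gives $\langle q(\bar s_1-\bar s_{1-1/q}),f\rangle\to\int_{S^2}\partial_{\vec n}f\,\dd\bar s$ for $f$ in a dense class, such as Schwartz functions. You need to add a uniform bound.
\begin{enumerate}
\item The Fourier transform of $q(\bar s_1-\bar s_{1-1/q})$ is $q\bigl(g(k)-g((1-\tfrac1q)k)\bigr)$, where $g(x)=\sin x/x$ and $k=|\xi|$.
\item By the mean value theorem this is at most $2\sup_x|xg'(x)|\le 4$ for $q\ge 2$.
\item Since $\int_{\R^3}(1+|\xi|^2)^{-2}\,\dd\xi<\infty$, these measures are bounded in $\besselspace^{-2}$. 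Together with convergence on a dense class, they therefore converge weakly.
\item The closure of $\finsuppmeas(B^3)$ is a closed subspace, hence weakly closed, so it contains the limit.
\end{enumerate}
It also needs saying why measures on $B^3$ lie in that closure. On measures the $\besselspace^{-2}$ norm is, up to a constant, $\iint e^{-\|x-y\|}\,\dd\nu\,\dd\nu$, and this kernel is continuous. So weak convergence of Riemann sums upgrades to norm convergence.

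\emph{Boundary points.} At $r=1$ your $\partial_R$ trick fails, because the shell average has a kink at $R=1$. Handle $r=1$ the same way as $r=0$, by continuity. Since $\vec x\mapsto e^{-t\|\vec x-\cdot\|}$ is continuous into $\besselspace^2$, the map $\vec x\mapsto\langle w_t,e^{-t\|\vec x-\cdot\|}\rangle$ is continuous.
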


It is clear from the expression in the lemma that, for every function \(f \in \besselspace^2\), the limit \(\lim_{t \to 0} \dispair{w_t}{f}\) exists. Thus, the three-dimensional ball admits a microscopic weighting, specified as follows. Note in particular that the microscopic weighting is supported entirely on \(S^2\), the boundary of the ball.

\begin{prop}
\label{prop:microwtg-on-B3}
On \(B^3\), the microscopic weighting  \(\widehat{w} \in \besselspace^{-2}\) is given by
\[
    \dispair{\widehat{w}}{f}
    =
    \lim_{t \to 0}
    \dispair{w_t}{f}
    =
    \int_{S^2} f(\vec{s}) \,\dd \bar{s}
    +
    \int_{S^2} \tfrac{\partial}{\partial \vec{n}} f(\vec{s}) \,\dd \bar{s},
\]
for each function \(f \in \besselspace^2\). 
\qed
\end{prop}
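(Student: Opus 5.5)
The plan is to take the limit \(t \to 0\) term by term in the explicit formula of the preceding lemma, and then check that the resulting functional really is an element of \(\besselspace^{-2}\), so that it qualifies as a microscopic weighting in the sense of \Cref{def:microwtg-compact}.

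First I would fix \(f \in \besselspace^2\) and observe that the three integrals
\[\int_{B^3} f \,\dd\bar{b}, \qquad \int_{S^2} f \,\dd\bar{s}, \qquad \int_{S^2} \tfrac{\partial}{\partial \vec{n}} f \,\dd\bar{s}\]
are finite numbers independent of \(t\). The first is finite because \(f \in L^2(\R^3)\) and \(B^3\) has finite measure. The other two are finite by the trace theorem. It says that for \(f \in \besselspace^2(\R^3)\) the restriction \(f|_{S^2}\) lies in \(\besselspace^{3/2}(S^2)\) and the normal derivative lies in \(\besselspace^{1/2}(S^2)\), so both are in \(L^2(S^2) \subset L^1(S^2)\). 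This is the same fact that already makes the lemma's expression meaningful.

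Given that, \(\dispair{w_t}{f}\) is a polynomial in \(t\) with finite coefficients, namely
\[\tfrac{1}{6}t^3\,\alpha + (t+1)^2\,\beta + (\tfrac12 t+1)\,\gamma,\]
where \(\alpha\), \(\beta\), \(\gamma\) denote the three integrals above. Its limit as \(t \to 0\) is \(\beta + \gamma\), which is the claimed formula.

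It remains to see that \(\widehat{w} \colon f \mapsto \beta + \gamma\) is a continuous linear functional on \(\besselspace^2\), that is, an element of \(\besselspace^{-2}\). Linearity is clear. For continuity, the trace estimates give
\[|\beta| \le C\|f\|_{\besselspace^2} \quad\text{and}\quad |\gamma| \le C'\|f\|_{\besselspace^2},\]
using Cauchy--Schwarz on \(S^2\) with its normalized finite measure. Hence \(\widehat{w}\) is bounded, and it is the weak-* limit of the \(w_t\), as \Cref{def:microwtg-compact} requires.

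The main, though mild, obstacle is this boundedness of the boundary terms. Everything else is algebra on the lemma's formula. I would simply cite the standard trace theorem for the Sobolev space \(H^2\), which coincides with \(\besselspace^2\), restricted to a smooth compact hypersurface. Equivalently, I could note that \(\widehat{w}\) is the value at \(t = 0\) of the polynomial family \(w_t\), whose coefficient functionals are individually bounded because each \(w_t\) lies in \(\besselspace^{-2}\) for several distinct values of \(t\). Vandermonde interpolation then recovers each coefficient, in particular the value at \(t = 0\), as a finite linear combination of the \(w_t\), so it is bounded without any separate appeal to the trace theorem.
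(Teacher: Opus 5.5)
Your proposal is correct and follows the paper's route: the paper likewise just reads off the \(t \to 0\) limit term by term from the lemma's formula, which is polynomial in \(t\). Your extra check that \(f \mapsto \int_{S^2} f \,\dd\bar{s} + \int_{S^2} \tfrac{\partial}{\partial \vec{n}} f \,\dd\bar{s}\) lies in \(\besselspace^{-2}\) (via the trace theorem, or via Vandermonde interpolation) fills in a point the paper leaves implicit; it could equally be obtained from Banach--Steinhaus applied along any sequence \(t_k \to 0\).
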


To see that the microscopic weighting on a finite space \(X\) of strictly negative type is energy-maximizing, we used the fact that it is a gauging for the distance matrix, with concentration equal to the maximal energy. That is, writing \(\vec{d}_i\) for the \(i^\th\) column of the distance matrix, so \((\vec{d}_i)_j = d(x_j,x_i) = d(x_i,x_j)\), we have 
\begin{equation}
\label{eq:gauging_equation}
    \onevector^\transpose \micro = 1 \quad \text{ and } \quad \vec{d}_i^\transpose \micro = M(X) \text{ for each } i \in \{1,\ldots, n\}.
\end{equation} 
To extend the methods of this paper beyond finite spaces, one might begin by defining gaugings and concentration in the generality of compact spaces. We will not go that far here; instead, we simply prove a statement analogous to \eqref{eq:gauging_equation} for the particular case of the three-dimensional ball.

In formulating the statement, we have to be attentive to the fact that the constant function \(1\) is not an element of \(L^2(\R^3)\), so it does not belong to \(\besselspace^2\). However, one can certainly find compactly supported, smooth functions (`bump functions') that are identically equal to 1 on \(B^3\), and we can choose one of these as a replacement for the vector \(\onevector\). More care must be taken with the Euclidean distance function \(d(\vec{b},-)\), which is not differentiable at \(\vec{b}\). However, because the microscopic weighting on \(B^3\) is supported only on the boundary, we can avoid difficulties in this case by restricting attention to points \(\vec{b}\) lying within the interior. We denote the interior by \(\mathrm{int}(B^3)\).

\begin{theorem}
\label{thm:compact_gauging_maximal_energy}
    Let \(h\) be any function in \(\besselspace^{2}\) such that \(h \equiv 1\) on a neighbourhood of \(B^3\), and for each \(\vec{b} \in \mathrm{int}(B^3)\), choose \(k_{\vec{b}} \in \besselspace^{2}\) such that \(k_{\vec{b}} = d(\vec{b},-)\) on a neighbourhood of \(S^2\). The microscopic weighting \(\widehat{w}\) on \(B^3\) satisfies
    \[\dispair{\widehat{w}}{h} = 1 \quad \text{and} \quad \dispair{\widehat{w}}{k_{\vec{b}}} = M(B^3) \text{ for all } \vec{b} \in \mathrm{int}(B^3).\]
\end{theorem}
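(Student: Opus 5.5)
The plan is to evaluate both pairings directly from the explicit formula in \Cref{prop:microwtg-on-B3},
\[
\dispair{\widehat{w}}{f} = \int_{S^2} f \,\dd \bar{s} + \int_{S^2} \tfrac{\partial}{\partial \vec{n}} f \,\dd \bar{s}.
\]
This formula depends only on the values of \(f\) and of its outward normal derivative on \(S^2\). Hence the pairing depends only on the germ of \(f\) near the sphere. This justifies the freedom in choosing \(h\) and \(k_{\vec{b}}\): any two admissible choices agree on a neighbourhood of \(S^2\), so they give the same pairing. We may therefore compute with \(h \equiv 1\) and \(k_{\vec{b}}(\vec{x}) = \|\vec{x} - \vec{b}\|\) near \(S^2\). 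The latter is smooth there because \(\vec{b}\) lies in the interior.

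For \(h\), the first integral is \(\int_{S^2} 1 \,\dd\bar{s} = 1\), since \(\bar{s}\) has mass one. The normal derivative of a function that is locally constant near \(S^2\) vanishes. Hence \(\dispair{\widehat{w}}{h} = 1\).

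For \(k_{\vec{b}}\), write \(r = \|\vec{b}\| < 1\). The gradient of \(\|\vec{x} - \vec{b}\|\) is \((\vec{x} - \vec{b})/\|\vec{x} - \vec{b}\|\), and the outward normal at \(\vec{s} \in S^2\) is \(\vec{s}\). So the normal derivative at \(\vec{s}\) is \((1 - \vec{s}\cdot\vec{b})/\|\vec{s} - \vec{b}\|\). Using \(\|\vec{s} - \vec{b}\|^2 = 1 - 2\vec{s}\cdot\vec{b} + r^2\), I would rewrite \(1 - \vec{s}\cdot\vec{b} = \tfrac{1}{2}(\|\vec{s} - \vec{b}\|^2 + 1 - r^2)\). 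The combined integrand then becomes
\[
\tfrac{3}{2}\|\vec{s} - \vec{b}\| + \tfrac{1 - r^2}{2}\,\|\vec{s} - \vec{b}\|^{-1}.
\]
This reduces the problem to two classical spherical averages.

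The first average is \(\int_{S^2} \|\vec{s} - \vec{b}\| \,\dd\bar{s} = 1 + r^2/3\). To compute it, align \(\vec{b}\) with an axis and use the fact that \(u = \cos\theta\) is uniformly distributed on \([-1,1]\). The integral becomes \(\tfrac{1}{2}\int_{-1}^{1} (1 + r^2 - 2ru)^{1/2} \,\dd u = \bigl((1+r)^3 - (1-r)^3\bigr)/(6r)\), which equals \(1 + r^2/3\). The second average is \(\int_{S^2} \|\vec{s} - \vec{b}\|^{-1} \,\dd\bar{s} = 1\) for \(r < 1\). This is Newton's theorem on the potential of a uniform spherical shell inside the shell, or the same one-variable computation. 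Substituting, the pairing is
\[
\tfrac{3}{2}\bigl(1 + \tfrac{r^2}{3}\bigr) + \tfrac{1 - r^2}{2} = 2.
\]
This equals \(M(B^3) = \prod_{i=1}^{1} \tfrac{2i}{2i-1} = 2\) by the Hinrichs--Nickolas--Wolf formula quoted earlier, and notably it is independent of \(\vec{b}\).

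The only delicate step is the algebraic rearrangement of the normal derivative. Individually, the two integrals depend on \(r\); only the particular combination \(\tfrac{3}{2}\) and \(\tfrac{1-r^2}{2}\) makes the \(r^2\) terms cancel. I would double-check that coefficient bookkeeping carefully. I would also confirm that the normalization of \(\bar{s}\) in \Cref{prop:microwtg-on-B3} is mass one, as used above.
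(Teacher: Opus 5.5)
Your proposal is correct and follows the paper's proof. Both evaluate the pairings via \Cref{prop:microwtg-on-B3}, using \(\int_{S^2} d(\vec{b},\vec{s})\,\dd\bar{s} = 1 + \tfrac{1}{3}|\vec{b}|^2\) and the normal derivative \((1-\vec{b}\cdot\vec{s})/\|\vec{s}-\vec{b}\|\), which give a total of \(2 = M(B^3)\). Your rewriting of that derivative, which reduces its average to Newton's shell theorem, is simply an explicit check of the identity \(\int_{S^2}\tfrac{\partial}{\partial\vec{n}} d(\vec{b},\vec{s})\,\dd\bar{s} = 1 - \tfrac{1}{3}|\vec{b}|^2\) that the paper states as a direct computation.
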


\begin{proof}
That \(\dispair{\widehat{w}}{h} = 1\) is clear from the expression for \(\widehat{w}\) in~\Cref{prop:microwtg-on-B3} (recalling that we are using the normalized measure \(\bar s\) on \(S^2\)). To evaluate \(\dispair{\widehat{w}}{k_{\vec{b}}}\) requires the following identities, obtained by direct computation: if \(\vec{b}\) is in the interior of \(B^3\) and \(\vec{s}\) is in \(S^2\), then     
\begin{gather*}
    \int_{\vec{s} \in S^2} \d(\vec{b}, \vec{s}) \,\dd \bar{s} 
    = 
    1 + \tfrac{1}{3}\left|\vec{b}\right|^2;
    \quad
    \tfrac{\partial}{\partial \vec{n}}\d(\vec{b}, \vec{s}) 
    = 
    \tfrac{1 - \vec{b}\cdot \vec{s}}{\sqrt{\left|\vec{b}\right|^2 + 1 - 2\vec{b}\cdot \vec{s}}};
    \\
    \int_{\vec{s} \in S^2} \tfrac{\partial}{\partial \vec{n}}\d(\vec{b}, \vec{s}) \,\dd \bar{s} 
    = 
    1 - \tfrac{1}{3}\left|\vec{b}\right|^2.
\end{gather*}
Using these identities along with \Cref{prop:microwtg-on-B3}, we calculate for each \(\vec{b}\) that
\[
        \dispair{\widehat{w}}{k_{\vec{b}}} 
        =  
        \int_{\vec{s} \in S^2} \d(\vec{b},\vec{s}) \,\dd \bar{s} 
        +
        \int_{\vec{s} \in S^2} \tfrac{\partial}{\partial \vec{n}}\d(\vec{b},\vec{s}) \,\dd \bar{s} 
        =
        2,
    \]
which, by Alexander's result~\cite{Alexander:two-problems}, is the maximal energy of \(B^3\).
\end{proof}

\Cref{thm:compact_gauging_maximal_energy} suggests that the microscopic weighting on the three-dimensional ball can be thought of as something like a `gauging' for the distance function, with `concentration' equal to the maximal energy. In the finite setting, gaugings for the distance function are precisely energy-maximizing measures (\Cref{prop:maximal-distance-energy}), and we used that fact to prove that the microscopic weighting on a finite space of strictly negative type is an energy-maximizing measure (\Cref{thm:microwtg_energy_maximizing}). We cannot hope to make a precisely analogous statement for \(B^3\), because Alexander's work shows that \(B^3\) has no energy-maximizing measure. Instead, we close with a computation that suggests that the microscopic weighting on \(B^3\) may be interpreted as the limit of Alexander's maximizing family in some suitable space of distributions.

\begin{theorem}
\label{thm:alexander_limit}
    Let \(\nu_1, \nu_2, \dots\) be Alexander's energy-maximizing family of measures on \(B^3\), and let  \(\widehat{w}\) be the microscopic weighting of \Cref{prop:microwtg-on-B3}. Let \(f \colon \R^3 \to \R\) be such that \(f\) and \(\tfrac{\partial}{\partial \vec{n}} f\) are both continuous on a neighbourhood of \(B^3\). Then
    \[
        \lim_{q \to \infty} \dispair{\nu_q}{f} = \lim_{q\to \infty} 
        \int_{B^3} f(\vec{b})  \,\dd \nu_q 
        =
        \dispair{\widehat{w}}{f}.
    \]
\end{theorem}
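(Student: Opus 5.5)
We compute directly in spherical coordinates. Write \(F(\rho) \coloneq \int_{S^2} f(\rho\vec{s})\,\dd\bar{s}\) for \(\rho\) in a neighbourhood of \(1\). Since \(\bar{s}_\rho\) is the normalized uniform measure on \(S^2_\rho\), we have \(\int f\,\dd\bar{s}_\rho = F(\rho)\). Substituting Alexander's family \eqref{eq:Alexander-family} gives
\[
\dispair{\nu_q}{f} = q F(1) + (1-q) F(1-1/q) = F(1) + q\bigl(F(1) - F(1-1/q)\bigr) - \bigl(F(1) - F(1-1/q)\bigr).
\]
Comparing this with \Cref{prop:microwtg-on-B3}, it suffices to prove two things as \(q\to\infty\): first, that \(F(1)-F(1-1/q)\to 0\); second, that \(q\bigl(F(1)-F(1-1/q)\bigr) \to \int_{S^2} \tfrac{\partial}{\partial\vec{n}} f(\vec{s})\,\dd\bar{s}\).

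For the first limit, \(f\) is uniformly continuous on a compact neighbourhood of \(S^2\). Hence \(F\) is continuous at \(1\), and the difference tends to zero.

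For the second limit, the outward normal derivative at \(\rho\vec{s}\) is the radial derivative \(\tfrac{\dd}{\dd\rho} f(\rho\vec{s})\). The hypothesis says this derivative is continuous on a neighbourhood of \(B^3\). By the mean value theorem or the fundamental theorem of calculus along each ray,
\[
q\bigl(F(1)-F(1-1/q)\bigr) = q\int_{1-1/q}^{1} \int_{S^2} \tfrac{\partial}{\partial\vec{n}} f(\rho\vec{s})\,\dd\bar{s}\,\dd\rho .
\]
Here we use Fubini to swap the integrals, which is justified because the integrand is continuous on the compact shell. This expression is an average of the continuous function \(\rho\mapsto \int_{S^2}\tfrac{\partial}{\partial\vec{n}}f(\rho\vec{s})\,\dd\bar{s}\) over \([1-1/q,1]\). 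So it converges to its value at \(\rho=1\), which is the claimed limit.

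The main obstacle is regularity. We must interpret \(\tfrac{\partial}{\partial\vec{n}}f\) on the shell \(1-1/q\le\rho\le1\) as the radial derivative, and we need enough continuity to differentiate \(F\) under the integral or to apply the fundamental theorem of calculus along rays. We take the hypothesis to mean that the radial derivative \(\vec{x}/|\vec{x}|\cdot\nabla f\) exists and is continuous near \(S^2\); then the above is rigorous. Combining the two limits gives \(\lim_q\dispair{\nu_q}{f} = \int_{S^2}f\,\dd\bar{s} + \int_{S^2}\tfrac{\partial}{\partial\vec{n}}f\,\dd\bar{s} = \dispair{\widehat{w}}{f}\).
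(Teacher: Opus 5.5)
Your proof is correct, and its skeleton matches the paper's. Both write \(\dispair{\nu_q}{f}\) as the spherical mean of \(f\) at radius \(1-1/q\) plus \(q\) times the difference of the spherical means at radii \(1\) and \(1-1/q\). Both then identify the two limits with the two terms of \(\dispair{\widehat{w}}{f}\).

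The difference is in how the limit passes through the integral over \(S^2\). The paper keeps the difference quotients \(q\bigl(f(\vec{s})-f((1-1/q)\vec{s})\bigr)\) pointwise on the sphere and applies dominated convergence, taking the dominating bound from the definition of the normal derivative. You average over \(S^2\) first. The limit is then \(F(1)+F'_-(1)\) as soon as \(F\) is left-differentiable at \(1\). The only analysis left is differentiation under the integral sign, which you get from the fundamental theorem of calculus along rays and Fubini on a compact shell.

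Your route is the more careful one at the delicate step. Dominated convergence needs a bound on the difference quotients that is uniform in \(\vec{s}\) for all large \(q\). The pointwise definition of the normal derivative only gives a threshold that depends on \(\vec{s}\). Continuity of the radial derivative on a compact shell supplies the uniformity; equivalently, the mean value theorem bounds every difference quotient by the supremum of \(|\partial_\rho f|\) over the shell. You use this explicitly, together with reading \(\tfrac{\partial}{\partial\vec{n}}f\) off \(S^2\) as the radial derivative, which the paper's hypothesis tacitly requires as well.

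The paper's version, for its part, shows the convergence at the level of integrands on \(S^2\). Yours makes plain that only the spherical mean of \(f\) matters.
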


\begin{proof}
Using the definition of \(\nu_q\) in equation \eqref{eq:Alexander-family} and the expression for \(\widehat{w}\) in \Cref{prop:microwtg-on-B3}, we have
    \begin{align}
        \lim_{q\to \infty}
            \int_{B^3} f(\vec{b}) \,\dd \nu_q 
        \nonumber 
        &=
        \lim_{q\to \infty}
        \left(
            \int_{B^3} q f(\vec{b})  \,\dd \bar{s}_1 
            + 
            \int_{B^3} (1 - q)f(\vec{b}) \,\dd\bar{s}_{1-1/q}
        \right)
        \nonumber \\
        &=
        \lim_{q\to \infty}
        \left(
            \int_{S^2} q f(\vec{s})\,\dd \bar{s} 
            + 
            \int_{S^2} (1 - q) f\left(\left(1-\tfrac{1}{q}\right)\vec{s}\right)\,\dd\bar{s}
        \right)
        \label{eq:alexander_limit_1} \\
        &=
        \lim_{q\to \infty}
        \left(
            \int_{S^2} f\left(\left(1-\tfrac{1}{q}\right)\vec{s}\right) \,\dd \bar{s} 
            + 
            \int_{S^2} \frac{f(\vec{s}) - f\left(\left(1-\tfrac{1}{q}\right)\vec{s}\right)}{\tfrac{1}{q}} \,\dd\bar{s}
        \right)
        \label{eq:alexander_limit_2} \\
        &=
        \int_{S^2} \lim_{q\to \infty} f\left(\left(1-\tfrac{1}{q}\right)\vec{s}\right) \,\dd \bar{s} 
        + 
        \int_{S^2} \lim_{q\to \infty} \frac{f(\vec{s}) - f\left(\left(1-\tfrac{1}{q}\right)\vec{s}\right)}{\tfrac{1}{q}} \,\dd\bar{s}
        \label{eq:alexander_limit_3} \\
        &=
        \int_{S^2} f(\vec{s}) \,\dd \bar{s}
        +
        \int_{S^2} \tfrac{\partial}{\partial \vec{n}} f(\vec{s}) \,\dd \bar{s}
        \label{eq:alexander_limit_4} \\
        &=
        \dispair{\widehat{w}}{f}.
        \nonumber 
    \end{align}
Here, the equality \eqref{eq:alexander_limit_1} follows from a change of variables, using the fact that the measure \(\bar s_{1-1/q}\) is supported on an interior concentric sphere. The equality \eqref{eq:alexander_limit_2} is obtained by algebraic manipulation.

The exchange of integrals and limits in \eqref{eq:alexander_limit_3} is possible thanks to the dominated convergence theorem. As \(|f|\) is continuous on \(B^3\) it is bounded there, and if we choose \(g_1\) to be a smooth, compactly supported function that is constantly equal to \(\sup_{B^3}|f|\) on \(S^2\), then \(g_1\) is integrable and satisfies 
\[\bigl|f((1-1/q) \vec{s})\bigr| \leq g_1(\vec{s})\]
for all \(\vec{s} \in S^2\) and \(q 
\geq 1\). Similarly, we can choose \(g_2\) to be a smooth, compactly supported function that is constantly equal to \(\sup_{S^2} \left| \tfrac{\partial}{\partial \vec{n}} f \right| + 1\) on \(S^2\). We have 
\[
    \biggl|  
        \tfrac{1}{q} 
        \Bigl(
            f(\vec{s}) - f\bigl(\left(1 - 1/q \right)\vec{s}\bigr) 
        \Bigr)
    \biggr| 
    - 
    \biggl| 
        \tfrac{\partial}{\partial \vec{n}} f(\vec{s}) 
    \biggr| 
    \leq 
    \Biggl| 
        \biggl|  
            \tfrac{1}{q} 
            \Bigl(
                f(\vec{s}) - 
                f\bigl(
                    (1-1/q)\vec{s}
                \bigr) 
            \Bigr)
        \biggr| 
        - 
        \biggl| 
            \tfrac{\partial}{\partial \vec{n}} f(\vec{s}) 
        \biggr| 
    \Biggr| 
\]
and for \(q \gg 1\) the right hand side is bounded by \(1\), say, by the definition of the normal derivative.  Hence, for all \(\vec{s} \in S^2\) and \(q \gg 1\),
\[
    \biggl|  
        \tfrac{1}{q} 
        \Bigl(
            f(\vec{s}) - f\bigl(\left(1 - 1/q \right)\vec{s}\bigr) 
        \Bigr)
    \biggr| 
    < 
    1
    +
    \biggl| 
        \tfrac{\partial}{\partial \vec{n}} f(\vec{s}) 
    \biggr| 
    \leq 
    g_2(\vec{s})
    .
\]
Equality \eqref{eq:alexander_limit_3} follows from the dominated convergence theorem using \(g_1 + g_2\) as the dominating function.

Finally, the equality \eqref{eq:alexander_limit_4} follows using the continuity of \(f\) and the definition of the normal derivative.
\end{proof}

Extrapolating from these observations, we suggest that in the setting of Euclidean space it may be fruitful to consider the existence of energy-maximizing \emph{distributions} where energy-maximizing \emph{measures} do not exist.


\bibliographystyle{amsplain}
\bibliography{RW_microweight.bib}

\end{document}